\title{Charts, signatures, and stabilizations of Lefschetz fibrations}
\author{Hisaaki Endo}
\address{Department of Mathematics\\Tokyo Institute of Technology \\\newline 
2-12-1 Oh-okayama\\Meguro-ku\\Tokyo 152-8551\\Japan}
\email{endo@math.titech.ac.jp}
\urladdr{}
\author{Isao Hasegawa}
\address{Ministry of Health\\Labour and Welfare \\\newline 
1-2-2 Kasumigaseki\\Chiyoda-ku\\Tokyo 100-8916\\Japan}
\email{hasegawa-isao01@mhlw.go.jp}
\author{Seiichi Kamada}
\address{Department of Mathematics\\Osaka City University \\\newline 
3-3-138 Sugimoto\\Sumiyoshi-ku\\Osaka 558-8585\\Japan}
\email{skamada@sci.osaka-cu.ac.jp}
\author{Kokoro Tanaka}
\address{Department of Mathematics\\Tokyo Gakugei University \\\newline 
4-1-1 Nukuikita-machi\\Koganei-shi\\Tokyo 184-8501\\Japan}
\email{kotanaka@u-gakugei.ac.jp}
\newtheorem{thm}{Theorem}[section]    % Standard theorem environment
\newtheorem{prop}[thm]{Proposition}          % Proposition environment with numbering 
\theoremstyle{definition}
\newtheorem{defn}[thm]{Definition}    % Definition environment with 
\newtheorem{prob}[thm]{Problem}          % Problem environment with numbering 
\newtheorem{exam}[thm]{Example}
\newtheorem{rem}{Remark}             % Unnumbered environment for remarks.
\def\co{\colon\thinspace}
\begin{document}

\begin{abstract}    % type your abstract below
We employ a certain labeled finite graph, called a chart, in a closed oriented surface 
for describing the monodromy of a(n achiral) Lefschetz fibration over the surface. 
Applying charts and their moves with respect to Wajnryb's presentation of mapping class groups, 
we first generalize a signature formula for Lefschetz fibrations over the $2$--sphere 
obtained by Endo and Nagami to that for Lefschetz fibrations over 
arbitrary closed oriented surface. 
We then show two theorems on stabilization of Lefschetz fibrations under fiber summing 
with copies of a typical Lefschetz fibration as generalizations of a theorem of Auroux. 
\end{abstract}

\maketitle

%%%%%%%%%%%%%%%%%%%%   Start of main body of article

%%%%%%%%%%%  Section 1  %%%%%%%%%%%

\section{Introduction}

%%%%%%%%%%%%%%%%%%%%%%%%%%%%

Matsumoto \cite{Matsumoto1986} 
proved that every Lefschetz fibration of genus one over a closed oriented surface 
is isomorphic to a fiber sum of copies of a holomorphic elliptic fibration on 
$\mathbb{CP}^2\# 9\overline{\mathbb{CP}}^2$ and a trivial torus bundle over the surface 
if it has at least one critical point. 
This result played a crucial role in completing the classification of diffeomorphism types 
of elliptic surfaces (see Gompf and Stipsicz \cite[Section 8.3]{GS1999}). 
Although such a classification has not been established for Lefschetz fibrations of higher 
genus, Auroux \cite{Auroux2003} proved a stabilization theorem for Lefschetz fibrations 
of genus two, which states that every Lefschetz fibration of genus two over the $2$--sphere 
becomes isomorphic to a fiber sum of copies of three typical fibrations after fiber summing with 
a holomorphic fibration on $\mathbb{CP}^2\# 13\overline{\mathbb{CP}}^2$. 
Auroux \cite{Auroux2005} gave a generalization of this theorem for Lefschetz fibrations 
of higher genus, which states that two Lefschetz fibrations of the same genus over the 
$2$--sphere which have the same signature, the same numbers of singular fibers of each type, 
and admit sections of the same self-intersection number become isomorphic after 
fiber summing the same number of copies of a `universal' Lefschetz fibration. 

Kamada \cite{Kamada1992, Kamada1996} introduced charts, 
which are labeled finite graphs in a disk, to describe monodromies of surface braids 
(see also a textbook \cite{Kamada2002} of Kamada). 
Kamada, Matsumoto, Matumoto, and Waki \cite{KMMW2005} considered a variant of chart 
for Lefschetz fibrations of genus one 
to give a remarkably simple proof of the above result of Matsumoto. 
Furthermore Kamada \cite{Kamada2012}, and Endo and Kamada \cite{EK2013, EK2014} 
made use of generalized charts 
to give a simple proof of the above theorem of Auroux for Lefschetz 
fibrations of genus two, and to investigate a stabilization theorem and an invariant 
for hyperelliptic Lefschetz fibrations of arbitrary genus. 
See also Baykur and Kamada \cite{BK2010}, and Hayano \cite{Hayano2011} 
for applications of charts to broken Lefschetz fibrations. 

In this paper we introduce a chart description for Lefschetz fibrations of genus greater 
than two over closed oriented surfaces of arbitrary genus to show a signature formula 
and two theorems on stabilization for such fibrations. In Section 2 
we introduce charts and chart moves with respect to Wajnryb's presentation 
of mapping class groups to examine monodromies of Lefschetz fibrations. 
After a short survey of Meyer's signature cocycle, we generalize a signature formula 
\cite{EN2004} of Endo and Nagami for Lefschetz fibrations over the $2$--sphere to that 
for Lefschetz fibrations over a closed oriented surface of arbitrary genus in Section 3. 
Section 4 is devoted to proofs of two theorems on stabilization of Lefschetz fibrations 
under fiber summing with copies of a `universal' Lefschetz fibration. 
In particular the first of our stabilization theorems is a generalization of the theorem 
of Auroux \cite{Auroux2005}. 
We make several comments on variations of chart description 
and propose some possible directions for future research in Section 5. 

%%%%%%%%%%%  Section 2  %%%%%%%%%%%

\section{Chart description for Lefschetz fibrations}

%%%%%%%%%%%%%%%%%%%%%%%%%%%%

In this section we review a definition and properties of Lefschetz fibrations 
and introduce a chart description for Lefschetz fibrations of genus greater than two. 

\subsection{Lefschetz fibrations and their monodromies}

In this subsection we review a precise definition and basic properties of Lefschetz fibrations. 
More details can be found in 
Matsumoto \cite{Matsumoto1996} and Gompf and Stipsicz \cite{GS1999}. 

Let $\Sigma_g$ be a connected closed oriented surface of genus $g$. 

\begin{defn}\label{LF}
Let $M$ and $B$ be connected closed oriented smooth $4$--manifold and $2$--manifold, 
respectively. 
A smooth map $f\co M\rightarrow B$ is called a {\it Lefschetz fibration} of 
genus $g$ if it satisfies the following conditions: 

(i) the set $\Delta\subset B$ of critical values of $f$ is finite 
and $f$ is a smooth fiber bundle 
over $B-\Delta$ with fiber $\Sigma_g$; 

(ii) for each $b\in\Delta$, there exists a unique 
critical point $p$ in the {\it singular fiber} $F_b:=f^{-1}(b)$ 
such that $f$ is locally written as 
$f(z_1,z_2)=z_1z_2$ or $\bar{z}_1z_2$ with respect to some local complex 
coordinates around $p$ and $b$ which are compatible with 
orientations of $M$ and $B$; 

(iii) no fiber contains a $(\pm 1)$--sphere. 

We call $M$ the {\it total space}, $B$ the {\it base space}, and $f$ the {\it projection}. 
We call $p$ a critical point of {\it positive type} (resp. of {\it negative type}) 
and $F_b$ a singular fiber of {\it positive type} (resp. of {\it negative type}) if $f$ is locally 
written as $f(z_1,z_2)=z_1z_2$ (resp. $f(z_1,z_2)=\bar{z}_1z_2$) in (ii). 
For a regular value $b\in B$ of $f$, $f^{-1}(b)$ is often called a {\it general fiber}. 
\end{defn}

\begin{rem}
A Lefschetz fibration in this paper is called an {\it achiral} Lefschetz fibration 
in many other papers. 
\end{rem}

Let $f\co M\rightarrow B$ and $f'\co M'\rightarrow B$ be Lefschetz fibrations of genus $g$ 
over the same base space $B$. We say that $f$ is {\it isomorphic} to $f'$ 
if there exist orientation preserving diffeomorphisms $H\co M\rightarrow M'$ 
and $h\co B\rightarrow B$ which satisfy $f'\circ H=h\circ f$. 
If we can choose such an $h$ isotopic to the identity relative to a given base point 
$b_0\in B$, we say that $f$ is {\it strictly isomorphic} to $f'$. 

Let $\mathcal{M}_g$ be the mapping class group of $\Sigma_g$, 
namely the group of all isotopy classes of orientation preserving 
diffeomorphisms of $\Sigma_g$. 
We assume that $\mathcal{M}_g$ acts on the {\it right}: 
the symbol $\varphi\psi$ means that we apply $\varphi$ first 
and then $\psi$ for $\varphi, \psi \in \mathcal{M}_g$. 
We denote the mapping class group of $\Sigma_g$ acting on the left by $\mathcal{M}_g^*$. 
Hence the identity map $\mathcal{M}_g\rightarrow \mathcal{M}_g^*$ is an anti-isomorphism. 
%We denote the right-handed Dehn twist along a simple closed curve $a$ 
%on $\Sigma_g$ by $t_a$. 

Let $f\co M\rightarrow B$ be a Lefschetz fibration of genus $g$ 
as in Definition \ref{LF}. Take a base point $b_0\in B$ 
and an orientation preserving diffeomorphism $\Phi\co\Sigma_g\rightarrow F_0:=f^{-1}(b_0)$. 
Since $f$ restricted over $B-\Delta$ 
is a smooth fiber bundle with fiber $\Sigma_g$, 
we can define a homomorphism 
\[
\rho\co\pi_1(B-\Delta,b_0)\rightarrow \mathcal{M}_g 
\]
called the {\it monodromy representation} of $f$ with respect to $\Phi$. 
Let $\gamma$ be the loop based at $b_0$ 
consisting of the boundary circle of a small disk neighborhood 
of $b\in\Delta$ oriented counterclockwise and 
a simple path connecting a point on the circle to $b_0$ in $B-\Delta$. 
It is known that $\rho([\gamma])$ is a 
Dehn twist along some essential simple closed curve 
$c$ on $\Sigma_g$, which is called the {\it vanishing cycle} of 
the critical point $p$ on $f^{-1}(b)$. 
If $p$ is of positive type (resp. of negative type), then 
the Dehn twist is right-handed (resp. left-handed). 

A singular fiber is said to be of  {\it type {\rm I}} if the vanishing cycle is non-separating 
and of {\it type ${\rm II}_h$} for $h=1,\ldots , [g/2]$ if the vanishing cycle is 
separating and it bounds a genus--$h$ subsurface of $\Sigma_g$.  
A singular fiber is said to be of {\it type ${\rm I}^+$} (resp. 
{\it type ${\rm I}^-$} and {\it type ${\rm II}_h^+$},  {\it type ${\rm II}_h^-$}) 
if it is of type I and of positive type (resp. of type I and of negative type, 
of type ${\rm II}_h$ and of positive type, of type ${\rm II}_h$ and of negative type). 
We denote by 
$n_0^{+}(f)$, $n_0^{-}(f)$, $n_h^{+}(f)$, and $n_h^{-}(f)$, 
the numbers of singular fibers of $f$ of type 
${\rm I}^+$, ${\rm I}^-$, ${\rm II}_h^+$, and ${\rm II}_h^-$, respectively.  
A Lefschetz fibration is called  {\it irreducible} if 
every singular fiber is of type I.   
A Lefschetz fibration is called {\it chiral} if every singular fiber is of positive type. 

Suppose that the cardinality of $\Delta$ is equal to $n$. 
A system ${\cal A}= (A_1, \dots, A_n)$ of arcs on $B$ is called 
a {\it Hurwitz arc system} for $\Delta$ with base point $b_0$ if 
each $A_i$ is an embedded arc connecting $b_0$ 
with a point of $\Delta$ in $B$ such that $A_i \cap A_j= \{b_0\}$ for $i \ne j$, 
and they appear in this order around $b_0$ (see Kamada \cite{Kamada2002}).  
When $B$ is a $2$-sphere, 
the system ${\cal A}$ determines a system of generators of 
$\pi_1(B- \Delta, b_0)$, say $(a_1, \dots, a_n)$.  
We call $( \rho(a_1), \dots, \rho(a_n) )$ a {\it Hurwitz system} of $f$.

\subsection{Chart description and Wajnryb's presentation}

In this subsection we introduce a chart description for Lefschetz fibrations of genus 
greater than two by employing Wajnryb's finite presentation \cite{Wajnryb1983} 
of mapping class groups. 
General theories of charts for presentations of groups were developed independently by 
Kamada \cite{Kamada2007} and Hasegawa \cite{Hasegawa2006}. 
We use the terminology of chart description in Kamada \cite{Kamada2007}. 

We first review a finite presentation of the mapping class group of a closed oriented 
surface due to Wajnryb. 
For $i=0,1,\ldots ,2g$, let $\zeta_i$ be a right-handed Dehn twist along 
the simple closed curve $c_i$ on $\Sigma_g$ depicted in Figure \ref{curves}. 

\begin{figure}[ht!]
\labellist
\small \hair 2pt
\pinlabel $c_1$ [r] at 0 155
\pinlabel $c_2$ [t] at 45 139
\pinlabel $c_3$ [t] at 70 146
\pinlabel $c_4$ [t] at 98 139
\pinlabel $c_0$ [r] at 88 182
\pinlabel $c_{2g}$ [t] at 242 139
\pinlabel $c_2$ [t] at 44 26
\pinlabel $c_{2h}$ [t] at 105 26
\pinlabel $c_{2h+2}$ [t] at 182 26
\pinlabel $c_{2g}$ [t] at 243 26
\pinlabel $s_{h}$ [r] at 135 72
\endlabellist
\centering
\includegraphics[scale=0.72]{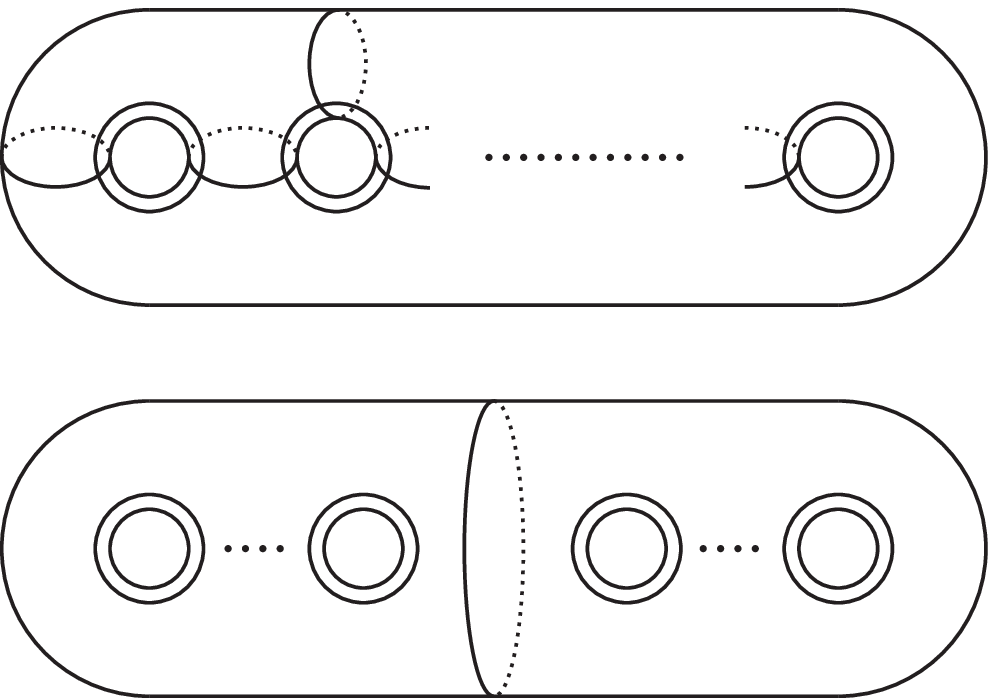}
\caption{Simple closed curves on $\Sigma_g$}
\label{curves}
\end{figure}

\begin{thm}[Wajnryb \cite{Wajnryb1983, Wajnryb1999}]\label{pres} 
Suppose that $g$ is greater than two. 
The mapping class group $\mathcal{M}_g$ is generated by elements 
$\zeta_0,\zeta_1,\zeta_2,\ldots ,\zeta_{2g}$ and has defining relations: 
{\allowdisplaybreaks %
\[\tag*{$\bullet$ (Far--commutation)}
\zeta_i\zeta_j =\zeta_j\zeta_i \;\; (1\leq i<j-1\leq 2g-1), \quad 
\zeta_0\zeta_j  =\zeta_j\zeta_0 \;\; (j=1,2,3,5,\ldots ,2g), 
\]
\[\tag*{$\bullet$ (Braid relation)}
\zeta_i\zeta_{i+1}\zeta_i =\zeta_{i+1}\zeta_i\zeta_{i+1} \;\; (i=1,\ldots ,2g-1), \quad 
\zeta_0\zeta_4\zeta_0 =\zeta_4\zeta_0\zeta_4;
\]
\[\tag*{$\bullet$ ($3$-chain relation)}
(\zeta_3\zeta_2\zeta_1)^4
=\zeta_0
\zeta_4^{-1}\zeta_3^{-1}\zeta_2^{-1}\zeta_1^{-2}\zeta_2^{-1}\zeta_3^{-1}\zeta_4^{-1}
\zeta_0\zeta_4\zeta_3\zeta_2\zeta_1^2\zeta_2\zeta_3\zeta_4;
\]
\[\tag*{$\bullet$ (Lantern relation)}
\delta_3\zeta_1\zeta_3\zeta_5 = \zeta_0\tau_2\zeta_0\tau_2^{-1}
\tau_1\tau_2\zeta_0\tau_2^{-1}\tau_1^{-1}, \quad {\sl where} 
\]
%{\allowdisplaybreaks %
\begin{align*}
& \tau_1:=\zeta_2\zeta_3\zeta_1\zeta_2, \quad 
\tau_2:=\zeta_4\zeta_5\zeta_3\zeta_4, \quad 
\mu:=\zeta_5\zeta_6\tau_2\zeta_0\tau_2^{-1}\zeta_6^{-1}\zeta_5^{-1}, \\
& \nu:=\zeta_1\zeta_2\zeta_3\zeta_4\zeta_0\zeta_4^{-1}\zeta_3^{-1}\zeta_2^{-1}\zeta_1^{-1}, 
\quad \delta_3:=\zeta_6^{-1}\zeta_5^{-1}\zeta_4^{-1}\zeta_3^{-1}\zeta_2^{-1}
\mu^{-1}\nu\mu\zeta_2\zeta_3\zeta_4\zeta_5\zeta_6;
\end{align*}
\[\tag*{$\bullet$ (Hyperelliptic relation)}
\zeta_{2g}\cdots\zeta_3\zeta_2\zeta_1^2\zeta_2\zeta_3\cdots\zeta_{2g}
\delta_g=\delta_g
\zeta_{2g}\cdots\zeta_3\zeta_2\zeta_1^2\zeta_2\zeta_3\cdots\zeta_{2g}, 
\quad {\sl where}
\]
%{\allowdisplaybreaks %
\begin{align*}
& \tau_1:=\zeta_2\zeta_3\zeta_1\zeta_2, \quad 
\tau_i:=\zeta_{2i}\zeta_{2i-1}\zeta_{2i+1}\zeta_{2i}, \\
& \nu_1:=\zeta_4^{-1}\zeta_3^{-1}\zeta_2^{-1}\zeta_1^{-2}\zeta_2^{-1}\zeta_3^{-1}\zeta_4^{-1}
\zeta_0\zeta_4\zeta_3\zeta_2\zeta_1^2\zeta_2\zeta_3\zeta_4, \quad 
\nu_i:=\tau_{i-1}\tau_i\nu_{i-1}\tau_i^{-1}\tau_{i-1}^{-1}, \\
& \mu_1:=\zeta_2\zeta_3\zeta_4\nu_1\zeta_1^{-1}\zeta_2^{-1}\zeta_3^{-1}\zeta_4^{-1}, \quad 
\mu_i:=\zeta_{2i}\zeta_{2i+1}\zeta_{2i+2}\nu_i\zeta_{2i-1}^{-1}\zeta_{2i}^{-1}
\zeta_{2i+1}^{-1}\zeta_{2i+2}^{-1}, \\
& \delta_g :=\mu_{g-1}^{-1}\cdots \mu_2^{-1}\mu_1^{-1}\zeta_1\mu_1\mu_2\cdots\mu_{g-1}
\end{align*}}
for $i=2,\ldots ,g-1$. 
\end{thm}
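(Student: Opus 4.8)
The plan is to follow the argument of Wajnryb \cite{Wajnryb1983, Wajnryb1999}, which divides into three tasks: (1) that the twists $\zeta_0,\zeta_1,\ldots,\zeta_{2g}$ generate $\mathcal{M}_g$; (2) that each displayed relation actually holds in $\mathcal{M}_g$; and (3) --- the real content --- that those relations are defining. Task (1) I would simply quote: by Dehn and Lickorish $\mathcal{M}_g$ is generated by finitely many Dehn twists along nonseparating simple closed curves, and Humphries' refinement shows that the $2g+1$ twists along the chain $c_1,\ldots,c_{2g}$ together with the twist along $c_0$ already suffice (and that $2g+1$ is minimal), which is exactly the generating set in the statement. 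Once task (2) is in hand, this makes the obvious homomorphism from the abstractly presented group $G$ onto $\mathcal{M}_g$ surjective.

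For task (2) each relation is checked by an explicit picture on $\Sigma_g$, the hypothesis $g>2$ entering only to provide enough room (it is what guarantees, for instance, that the curve $c_5$ and the subsurfaces used in the lantern and hyperelliptic relations exist). Far--commutation is the commuting of twists supported on disjoint curves; the braid relations hold because each relevant pair of curves ($c_i,c_{i+1}$, and $c_0,c_4$) meets transversely in a single point; the $3$--chain relation is the classical chain relation for $c_1,c_2,c_3$, whose regular neighbourhood is a twice--holed torus, the right--hand side expressing the product of the twists along its two boundary circles in terms of the generators; the lantern relation is the classical lantern relation realised on a four--holed sphere embedded in $\Sigma_g$, with the four boundary twists being $\delta_3,\zeta_1,\zeta_3,\zeta_5$ and the three interior twists being the conjugates of $\zeta_0$ appearing on its right--hand side; and the hyperelliptic relation (whose name reflects a symmetry of the chain $c_1,\ldots,c_{2g}$ under the hyperelliptic involution of $\Sigma_g$) is verified by exhibiting both of its sides on a common hyperelliptically symmetric subsurface and exploiting that symmetry. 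None of this is deep; it is diagram chasing.

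Task (3), injectivity of $G\to\mathcal{M}_g$, is the crux. The principal tool is the Hatcher--Thurston cut--system complex $X_g$: its vertices are isotopy classes of cut systems (a $g$--tuple of disjoint nonseparating curves with connected planar complement), its edges are elementary moves that replace a single curve of the system, and its $2$--cells are attached along a short list of cycles of such moves; Hatcher and Thurston proved that $X_g$ is connected and simply connected. Since $\mathcal{M}_g$ acts on $X_g$ with finitely many orbits of cells, the standard recipe for presenting a group acting on a simply connected complex (Macbeath, Brown) yields a presentation of $\mathcal{M}_g$ whose generators come from the stabiliser of a chosen base vertex and from the edges of the quotient $X_g/\mathcal{M}_g$, and whose relations come from the stabilisers of the remaining edges and from the $2$--cells. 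One must then (a) describe the base vertex stabiliser, which is assembled from the mapping class group of a sphere with $2g$ holes and is itself handled by an auxiliary induction on genus and on the number of boundary components --- so that in fact a compatible family of presentations for bordered surfaces is proved at once, one passing between complexity levels by capping boundary circles, equivalently via Birman exact sequences; (b) account for the edge--stabiliser contributions; and (c) run through each $2$--cell of $X_g$ and verify that the relation it contributes is a consequence of the far--commutation, braid, $3$--chain, lantern and hyperelliptic families. Harer carried out (a)--(c) to obtain a (large) presentation, which \cite{Wajnryb1983} then collapses to the one above; a second, more elementary and self-contained proof, together with the corrected form of the presentation, is given in \cite{Wajnryb1999}. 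I expect the hardest and most error--prone part to be step (c) together with the bookkeeping of the auxiliary induction in (a) --- it is the accumulated delicacy of these steps that is responsible for the presentation having gone through more than one iteration before reaching its present form.
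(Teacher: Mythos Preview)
The paper does not prove this theorem at all: it is stated with attribution to Wajnryb \cite{Wajnryb1983, Wajnryb1999} and used as input, with no proof or sketch given. So there is nothing in the paper to compare your proposal against.

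That said, your outline is a fair summary of the structure of Wajnryb's argument (Humphries generators; verification of the relations on the surface; the Hatcher--Thurston cut--system complex and the Macbeath/Brown machinery for presenting a group acting on a simply connected complex; the inductive treatment of stabilisers via bordered surfaces; and the reduction of Harer's large presentation to the compact one). If you were actually asked to supply a proof here, the appropriate response in this paper's context would be simply to cite \cite{Wajnryb1983, Wajnryb1999}, as the authors do; your sketch would be out of proportion with how the result is used.
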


We make use of the presentation above to introduce a notion of chart 
which gives a graphic description of monodromy representations of Lefschetz fibrations. 
We set 
{\allowdisplaybreaks %
\begin{align*}
\mathcal{X} & :=\{\zeta_0,\zeta_1,\ldots ,\zeta_{2g}\}, \\
\mathcal{R} & :=\{r_F(i,j)\, |\, 1\leq i<j-1\leq 2g-1\}\cup \{r_F(0,j)\, |\, j=1,2,3,5,\ldots ,2g\} \\
& \quad \cup \{r_B(i)\, |\, i=0,1,\ldots ,2g-1\}\cup \{r_C,r_L,r_H \}, \\
\mathcal{S} & :=\{\ell_0(i)^{\pm 1}\, |\, i=0,1,\ldots ,2g\} 
\cup \{\ell_h^{\pm 1}\, |\, h=1,\ldots ,[g/2]\}, 
\end{align*}}
for $g\geq 3$, where 
{\allowdisplaybreaks %
\begin{align*}
& r_F(i,j) :=\zeta_i\zeta_j\zeta_i^{-1}\zeta_j^{-1}, \quad 
r_B(0):=\zeta_0\zeta_4\zeta_0\zeta_4^{-1}\zeta_0^{-1}\zeta_4^{-1}, \\
& r_B(i) :=\zeta_i\zeta_{i+1}\zeta_i\zeta_{i+1}^{-1}\zeta_i^{-1}\zeta_{i+1}^{-1} \;\; (i=1,\ldots ,2g-1), \\
& r_C:=(\zeta_3\zeta_2\zeta_1)^4
\zeta_4^{-1}\zeta_3^{-1}\zeta_2^{-1}\zeta_1^{-2}\zeta_2^{-1}\zeta_3^{-1}\zeta_4^{-1}
\zeta_0^{-1}\zeta_4\zeta_3\zeta_2\zeta_1^2\zeta_2\zeta_3\zeta_4\zeta_0^{-1}, \\
& r_L:=\delta_3\zeta_1\zeta_3\zeta_5\tau_1\tau_2\zeta_0^{-1}\tau_2^{-1}\tau_1^{-1}
\tau_2\zeta_0^{-1}\tau_2^{-1}\zeta_0^{-1}, \\
& r_H:=\zeta_{2g}\cdots\zeta_3\zeta_2\zeta_1^2\zeta_2\zeta_3\cdots\zeta_{2g}
\delta_g\zeta_{2g}^{-1}\cdots\zeta_3^{-1}\zeta_2^{-1}\zeta_1^{-2}
\zeta_2^{-1}\zeta_3^{-1}\cdots\zeta_{2g}^{-1}\delta_g^{-1}, \\
& \ell_0(i):=\zeta_i \;\; (i=0,1,\ldots ,2g), \quad 
\ell_h:=(\zeta_1\zeta_2\cdots\zeta_{2h})^{4h+2} \;\; (h=1,\ldots ,[g/2]), 
\end{align*}}
and $\delta_3, \tau_1, \tau_2, \delta_g$ are defined as in Theorem \ref{pres}. 

Let $B$ be a connected closed oriented surface 
and $\Gamma$ a finite graph in $B$ such that each edge of $\Gamma$ is oriented 
and labeled with an element of $\mathcal{X}$. 
We denote the label $\zeta_i$ by $i$ for short. 
Choose a simple path $\gamma$ which intersects with edges of $\Gamma$ 
transversely and does not intersect with vertices of $\Gamma$. 
For such a path $\gamma$, we obtain a word 
$w_{\Gamma}(\gamma)$ in $\mathcal{X}\cup\mathcal{X}^{-1}$ 
by reading off the labels of intersecting edges along $\gamma$ with exponents 
as in Figure \ref{intersection} (a). 
We call the word $w_{\Gamma}(\gamma)$ the {\it intersection word} of $\gamma$ 
with respect to $\Gamma$. 
Conversely, we can specify the number, orientations, and labels of consecutive edges in $\Gamma$ 
by indicating a (dashed) arrow intersecting the edges transversely 
together with the intersection 
word of the arrow with respect to $\Gamma$ (see Figure \ref{intersection} (b) and (c)). 

\begin{figure}[ht!]
\labellist
\small \hair 2pt
\pinlabel $\gamma$ [r] at 0 18
\pinlabel $1$ [t] at 30 0
\pinlabel $2$ [t] at 59 0
\pinlabel $1$ [t] at 87 0
\pinlabel $3$ [t] at 116 0
\pinlabel $2$ [t] at 143 0
\pinlabel {(a)} [t] at 85 -15
\pinlabel $w$ [l] at 255 18
\pinlabel {(b)} [t] at 235 -15
\pinlabel $w$ [b] at 313 33
\pinlabel {(c)} [t] at 313 -15
\endlabellist
\centering
\includegraphics[scale=0.9]{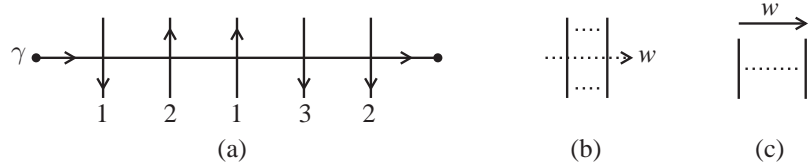}
\vspace{5mm}
\caption{Intersection word 
$w_{\Gamma}(\gamma)=w=\zeta_1\zeta_2^{-1}\zeta_1^{-1}\zeta_3\zeta_2$}
\label{intersection}
\end{figure}

For a vertex $v$ of $\Gamma$, a small simple closed curve surrounding $v$ in the 
counterclockwise direction is called a {\it meridian loop} of $v$ and denoted by $m_v$. 
The vertex $v$ is said to be {\it marked} if one of the regions around 
$v$ is specified by an asterisk. 
If $v$ is marked, the intersection word $w_{\Gamma}(m_v)$ of $m_v$ 
with respect to $\Gamma$ is well-defined. 
If not, it is determined up to cyclic permutation. See Kamada \cite{Kamada2007} 
for details. 

\begin{defn}\label{def:chart} 
A {\it chart} in $B$ is a finite graph $\Gamma$ in $B$ 
(possibly being empty or having {\it hoops} that are closed edges without vertices) 
whose edges are labeled with an element of $\mathcal{X}$, 
and oriented so that the following conditions are satisfied 
(see Figure \ref{verticesA}, Figure \ref{verticesB}, and Figure \ref{verticesC}):   
\begin{itemize}
\item[(1)] the vertices of $\Gamma$ are classified into two families: 
{\it white vertices} and {\it black vertices};  
\item[(2)] if $v$ is a white vertex (resp. a black vertex), 
the word $w_{\Gamma}(m_v)$ is a cyclic permutation of an element of 
$\mathcal{R}\cup\mathcal{R}^{-1}$ (resp. of $\mathcal{S}$). 
\end{itemize}
A white vertex $v$ is said to be of 
{\it type} $r$ (resp. of {\it type} $r^{-1}$) if 
$w_{\Gamma}(m_v)^{-1}$ is a cyclic permutation of $r\in\mathcal{R}$ 
(resp. of $r^{-1}\in\mathcal{R}^{-1}$). 
A black vertex $v$ is said to be of {\it type} $s$ if 
$w_{\Gamma}(m_v)$ is a cyclic permutation of $s\in\mathcal{S}$. 
A chart $\Gamma$ is said to be {\it marked} if each white vertex (resp. black vertex) $v$ is 
marked and $w_{\Gamma}(m_v)$ is exactly an element of 
$\mathcal{R}\cup\mathcal{R}^{-1}$ (resp. of $\mathcal{S}$). 
If a base point $b_0$ of $B$ is specified, we always assume that a chart $\Gamma$ is 
disjoint from $b_0$. 
A chart consisting of two black vertices and one edge connecting them is called 
a {\it free edge}. 
%We denote the label $\zeta_i$ by $i$ for short. 
\end{defn}

\begin{figure}[ht!]
\labellist
\footnotesize \hair 2pt
\pinlabel $i$ [tr] at 1 21
\pinlabel $j$ [br] at 1 52
\pinlabel $i$ [bl] at 31 52
\pinlabel $j$ [tl] at 31 21
\pinlabel $i$ [tr] at 95 18
\pinlabel $i\negthinspace +\negthinspace 1$ [r] at 88 35
\pinlabel $i$ [br] at 96 54
\pinlabel $i\negthinspace +\negthinspace 1$ [bl] at 133 54
\pinlabel $i$ [l] at 139 35
\pinlabel $i\negthinspace +\negthinspace 1$ [tl] at 132 18
\pinlabel $3$ [b] at 222 73
\pinlabel $2$ [b] at 231 73
\pinlabel $1$ [b] at 239 73
\pinlabel $3$ [b] at 248 73
\pinlabel $2$ [b] at 256 73
\pinlabel $1$ [b] at 265 73
\pinlabel $3$ [b] at 273 73
\pinlabel $2$ [b] at 282 73
\pinlabel $1$ [b] at 291 73
\pinlabel $3$ [b] at 300 73
\pinlabel $2$ [b] at 309 73
\pinlabel $1$ [b] at 317 73
\pinlabel $4$ [t] at 346 0
\pinlabel $3$ [t] at 337 0
\pinlabel $2$ [t] at 330 0
\pinlabel $1$ [t] at 321 0
\pinlabel $1$ [t] at 312 0
\pinlabel $2$ [t] at 303 0
\pinlabel $3$ [t] at 295 0
\pinlabel $4$ [t] at 287 0
\pinlabel $0$ [t] at 278 0
\pinlabel $4$ [t] at 270 0
\pinlabel $3$ [t] at 261 0
\pinlabel $2$ [t] at 252 0
\pinlabel $1$ [t] at 243 0
\pinlabel $1$ [t] at 235 0
\pinlabel $2$ [t] at 227 0
\pinlabel $3$ [t] at 219 0
\pinlabel $4$ [t] at 210 0
\pinlabel $0$ [t] at 201 0
\endlabellist
\centering
\includegraphics[scale=0.8]{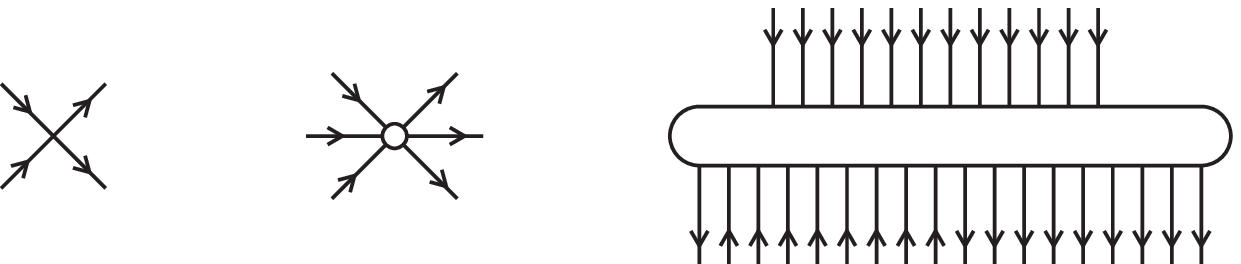}
\caption{Vertices of type $r_F(i,j)$, $r_B(i)\; (i\ne 0)$, $r_C$}
\label{verticesA}
\end{figure}

\begin{figure}[ht!]
\labellist
\footnotesize \hair 2pt
\pinlabel $\delta_3$ [b] at 29 90
\pinlabel $1$ [b] at 47 84
\pinlabel $3$ [b] at 56 84
\pinlabel $5$ [b] at 64 84
\pinlabel $\tau_1$ [b] at 81 90
\pinlabel $\tau_2$ [b] at 107 90
\pinlabel $0$ [t] at 128 9
\pinlabel $\tau_2^{-\negthinspace 1}$ [t] at 111 4
\pinlabel $\tau_1^{-\negthinspace 1}$ [t] at 85 4
\pinlabel $\tau_2$ [t] at 62 4
\pinlabel $0$ [t] at 42 9
\pinlabel $\tau_2^{-\negthinspace 1}$ [t] at 26 4
\pinlabel $0$ [t] at 8 9
\pinlabel $2g$ [b] at 218 94
\pinlabel $1$ [b] at 244 94
\pinlabel $1$ [b] at 253 94
\pinlabel $2g$ [b] at 278 94
\pinlabel $\delta_g$ [l] at 321 47
\pinlabel $2g$ [t] at 278 0
\pinlabel $1$ [t] at 253 0
\pinlabel $1$ [t] at 244 0
\pinlabel $2g$ [t] at 218 0
\pinlabel $\delta_g^{-1}$ [r] at 178 47
\endlabellist
\centering
\includegraphics[scale=0.8]{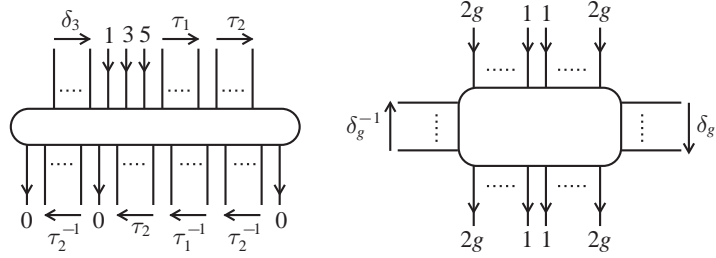}
\caption{Vertices of type $r_L$ and $r_H$}
\label{verticesB}
\end{figure}

\begin{figure}[ht!]
\labellist
\footnotesize \hair 2pt
\pinlabel $i$ [l] at 46 34
\pinlabel $i$ [l] at 46 8
\pinlabel $1$ [b] at 223 45
\pinlabel $2h$ [b] at 202 45
\pinlabel $1$ [b] at 193 45
\pinlabel $2h$ [b] at 174 45
\pinlabel $1$ [b] at 139 45
\pinlabel $2h$ [b] at 118 45
\endlabellist
\centering
\includegraphics[scale=0.8]{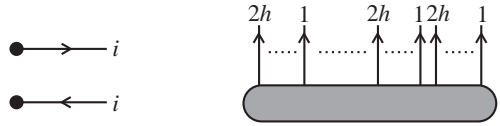}
\caption{Vertices of type $\ell_0(i)^{\pm 1}$ and $\ell_h$}
\label{verticesC}
\end{figure}

\begin{rem} 
It would be worth noting that the intersection word of a `clockwise' meridian of 
a white vertex of type $r$ is equal to $r$, while that of a `counterclockwise' meridian 
of a black vertex of type $s$ is equal to $s$ in this paper. 
This notation is different from those of Kamada \cite{Kamada2007} 
and Hasegawa \cite{Hasegawa2006}, 
who always consider `counterclockwise' meridians for both white and black vertices. 
\end{rem}

We next introduce several moves for charts. 
Let $\Gamma$ and $\Gamma'$ be two charts on $B$ and $b_0$ a base point of $B$. 

Let $D$ be a disk embedded in $B-\{b_0\}$. 
Suppose that the boundary $\partial D$ of $D$ intersects $\Gamma$ and $\Gamma'$ 
transversely. 

\begin{defn} 
We say that $\Gamma'$ is obtained from $\Gamma$ by a {\it chart move of type W} 
if $\Gamma\cap (B-{\rm Int}\, D)=\Gamma'\cap (B-{\rm Int}\, D)$ and 
that both $\Gamma\cap D$ and $\Gamma'\cap D$ have no black vertices. 
We call chart moves of type W shown in Figure \ref{movesA} (a), (b), and (c), 
a {\it channel change}, a {\it birth/death of a hoop}, and 
a {\it birth/death of a pair of white vertices}, respectively. 
\end{defn}

\begin{figure}[ht!]
\labellist
\footnotesize \hair 2pt
\pinlabel $i$ [tr] at 0 66
\pinlabel $i$ [bl] at 41 106
\pinlabel $i$ [tr] at 80 66
\pinlabel $i$ [bl] at 121 106
\pinlabel (a) [t] at 61 84
\pinlabel $i$ [tl] at 184 76
\pinlabel empty at 258 86
\pinlabel (b) [t] at 209 84
\pinlabel $r$ at 57 20
\pinlabel $r^{\negthinspace -\negthinspace 1}$ at 87 23
\pinlabel (c) [t] at 132 19
\endlabellist
\centering
\includegraphics[scale=0.8]{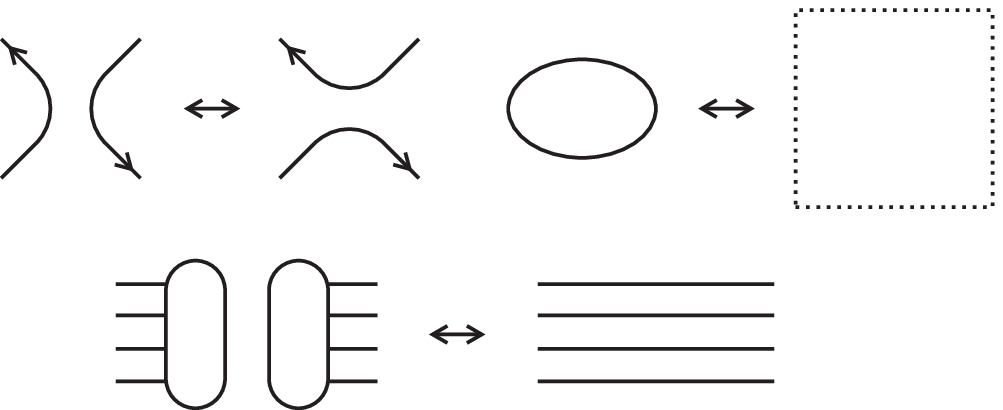}
\caption{Chart moves of type W}
\label{movesA}
\end{figure}

Let $s$ and $s'$ be elements of $\mathcal{S}$. 
Suppose that there exists a word $w$ in $\mathcal{X}\cup\mathcal{X}^{-1}$ such that 
two words $s'$ and $wsw^{-1}$ determine the same element of $\mathcal{M}_g$. 

\begin{defn} 
If a chart $\Gamma$ contains a black vertex of type $s$, 
then we can change a part of $\Gamma$ near the vertex by using a local replacement 
depicted in Figure \ref{movesB} to obtain another chart $\Gamma'$. 
We say that $\Gamma'$ is obtained from $\Gamma$ by a {\it chart move of transition}. 
Note that the blank labeled with ${\rm T}$ can be filled only with edges and white vertices. 
% Is there a nice reference?
\end{defn}

\begin{figure}[ht!]
\labellist
\footnotesize \hair 2pt
\pinlabel $s$ [b] at 35 81
\pinlabel $s'$ [b] at 205 81
\pinlabel $w$ [bl] at 245 86
\pinlabel $w$ [tl] at 245 40
\pinlabel $s$ [bl] at 257 81
\pinlabel T at 232 64
\endlabellist
\centering
\includegraphics[scale=0.8]{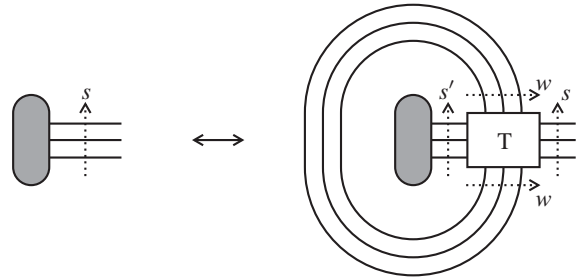}
\caption{Chart move of transition}
\label{movesB}
\end{figure}

\begin{defn} 
We say that $\Gamma'$ is obtained from $\Gamma$ by a {\it chart move of conjugacy type} 
if $\Gamma'$ is obtained from $\Gamma$ 
by a local replacement depicted in Figure \ref{movesC}. 
\end{defn}

\begin{figure}[ht!]
\labellist
\footnotesize \hair 2pt
\pinlabel $b_0$ [l] at 7 22
\pinlabel $b_0$ [l] at 117 22
\pinlabel $i$ [l] at 135 22
\pinlabel $b_0$ [l] at 197 22
\pinlabel $b_0$ [l] at 304 22
\pinlabel $i$ [l] at 323 22
\endlabellist
\centering
\includegraphics[scale=0.8]{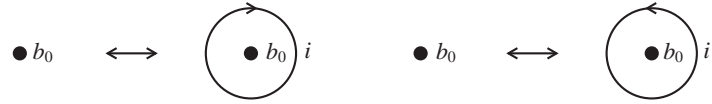}
\caption{Chart moves of conjugacy type}
\label{movesC}
\end{figure}

Let $\Gamma$ be a chart in $B$ with base point $b_0$ 
and $\Delta_{\Gamma}$ the set of black vertices of $\Gamma$. 
For a loop $\gamma$ in $B-\Delta_{\Gamma}$ based at $b_0$, 
the element of $\mathcal{M}_g$ determined by 
the intersection word $w_{\Gamma}(\gamma)$ of $\gamma$ with respect to $\Gamma$ does not 
depend on a choice of representative of the homotopy class of $\gamma$. 
Thus we obtain a homomorphism 
$\rho_{\Gamma}\co\pi_1(B-\Delta_{\Gamma},b_0)\rightarrow \mathcal{M}_g$, 
which is called the {\it homomorphism determined by $\Gamma$}. 

We now state a classification of Lefschetz fibrations in terms of charts and chart moves. 
Let $B$ be a connected closed oriented surface. 
%Assume that $\partial B$ is either empty or connected and the monodromy along 
%$\partial B$ is trivial for every Lefschetz fibration over $B$ if $\partial B$ is not empty. 

\begin{prop}\label{correspondence} 
Suppose that $g$ is greater than two. 
{\rm (1)} Let $f$ be a Lefschetz fibration of genus $g$ over $B$ and 
$\rho$ a monodromy representation of $f$. 
Then there exists a chart $\Gamma$ in $B$ such that the homomorphism $\rho_{\Gamma}$ 
determined by $\Gamma$ is equal to $\rho$. 
{\rm (2)} For every chart $\Gamma$ in $B$, there exists a Lefschetz fibration 
$f$ of genus $g$ over $B$ such that a monodromy representation of $f$ 
is equal to the homomorphism $\rho_{\Gamma}$ determined by $\Gamma$. 
\end{prop}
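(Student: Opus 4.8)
The plan is to establish the correspondence by tracing through the standard dictionary between Lefschetz fibrations, their monodromy representations, Wajnryb's presentation, and charts in the sense of Kamada \cite{Kamada2007}. The essential point is that a chart in $B$ is a graphical encoding of a homomorphism $\pi_1(B-\Delta_\Gamma,b_0)\to\mathcal{M}_g$ \emph{together with} the extra data of which conjugacy classes of meridians go to which generators of $\mathcal{X}$, and that this extra data is exactly what distinguishes a monodromy representation of a Lefschetz fibration: by the discussion in \S2.1, each local monodromy around a critical value is a (right- or left-handed) Dehn twist along an essential simple closed curve, and since $g>2$ every such Dehn twist is conjugate in $\mathcal{M}_g$ to some $\zeta_i^{\pm 1}$ (indeed to $\zeta_1^{\pm1}$ if the vanishing cycle is non-separating, and to $\ell_h^{\pm1}$-type elements when separating — these are precisely the elements of $\mathcal{S}$). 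So $\mathcal{S}$ has been chosen to be exactly the set of conjugacy-class representatives of the local monodromies that can occur.

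For part (1), given $f$ and a monodromy representation $\rho\co\pi_1(B-\Delta,b_0)\to\mathcal{M}_g$, I would first fix a handle decomposition of $B$ (equivalently, choose a system of generators for $\pi_1(B-\Delta,b_0)$ consisting of the $2g$ surface generators together with one small loop $m_b$ around each critical value $b$, subject to the single surface relation that the product of commutators times the product of the $m_b$'s is trivial). Each $\rho(m_b)$ is a Dehn twist $\zeta_{i(b)}^{\pm1}$-conjugate as above, so write $\rho(m_b)=w_b\,s_b\,w_b^{-1}$ in $\mathcal{M}_g$ with $s_b\in\mathcal{S}$ and $w_b$ a word in $\mathcal{X}\cup\mathcal{X}^{-1}$; each surface generator's image is also a word $u_k$ in $\mathcal{X}\cup\mathcal{X}^{-1}$ since $\mathcal{X}$ generates $\mathcal{M}_g$. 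Now build $\Gamma$ by the standard procedure of Kamada \cite{Kamada2007, Kamada2002}: place a black vertex of type $s_b$ near each $b$, run edges out of it spelling $w_b$ along a Hurwitz-type arc toward $b_0$, route edges along the $1$-handles of $B$ realizing the words $u_k$, and — crucially — use white vertices to absorb the defining relations of Theorem \ref{pres}: since the product that should be trivial in $\pi_1(B-\Delta,b_0)$ maps to a word in $\mathcal{X}\cup\mathcal{X}^{-1}$ representing $1\in\mathcal{M}_g$, that word is a product of conjugates of elements of $\mathcal{R}\cup\mathcal{R}^{-1}$, and each such conjugate can be realized by a white vertex of the corresponding type (Figures \ref{verticesA}--\ref{verticesB}) together with a ``tentacle'' of edges carrying the conjugating word. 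By construction the intersection word of any loop in $B-\Delta_\Gamma$ under $\rho_\Gamma$ agrees with $\rho$, so $\rho_\Gamma=\rho$.

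For part (2), the direction is reversed: given any chart $\Gamma$ in $B$, form $\rho_\Gamma\co\pi_1(B-\Delta_\Gamma,b_0)\to\mathcal{M}_g$ as defined just before the proposition (well-definedness across homotopies of loops is guaranteed precisely by the white-vertex condition, which encodes the relations of $\mathcal{M}_g$, and across the meridians of black vertices by their type lying in $\mathcal{S}$). The image of each meridian $m_v$ of a black vertex $v$ of type $s\in\mathcal{S}$ is a conjugate of $s$, hence a right- or left-handed Dehn twist along an essential simple closed curve (non-separating for the $\ell_0(i)^{\pm1}$ types, separating of genus $h$ for the $\ell_h^{\pm1}$ types). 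Standard Lefschetz fibration theory (Matsumoto \cite{Matsumoto1996}, Gompf--Stipsicz \cite{GS1999}) then lets me assemble, over each $2$-handle/disk neighborhood of a black vertex, a Lefschetz-type singular fiber with that vanishing cycle (of positive or negative type according to the sign), glued to the surface bundle over $B-\Delta_\Gamma$ with monodromy $\rho_\Gamma$; condition (iii) of Definition \ref{LF} (no $(\pm1)$-sphere in a fiber) is automatic since $g>2$ forces every vanishing cycle to be essential and the resulting fibration is relatively minimal. The monodromy representation of the $f$ so constructed is, tautologically, $\rho_\Gamma$.

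The main obstacle I expect is bookkeeping rather than conceptual: verifying that every word in $\mathcal{X}\cup\mathcal{X}^{-1}$ representing the identity in $\mathcal{M}_g$ can be turned into a planar chart fragment with only white vertices — i.e. that the ``relators as white vertices'' construction actually closes up inside a disk in $B$ without stray black vertices — which is exactly the content of the general chart-description theory of Kamada \cite{Kamada2007} and Hasegawa \cite{Hasegawa2006} and may simply be cited, and checking that the sign conventions for meridians (the Remark after Figure \ref{verticesC}, ``clockwise for white, counterclockwise for black'') match the right-/left-handed Dehn twist convention of \S2.1 so that positive-type critical points correspond to the appropriate orientation of edges at black vertices. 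Once the conventions are pinned down, the proof is a direct translation, and I would present it compactly by invoking the general machinery and only spelling out the Lefschetz-fibration-specific identification of $\mathcal{S}$ with the admissible local monodromies.
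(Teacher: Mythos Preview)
Your proposal is correct and follows essentially the same route as the paper: the paper does not give a self-contained proof of Proposition~\ref{correspondence} at all, but instead states that it ``follow[s] from a classification theorem of Lefschetz fibrations due to Kas \cite{Kas1980} and Matsumoto \cite{Matsumoto1996} together with fundamental theorems on charts and chart moves by Kamada \cite[Sections 4--8]{Kamada2007}'' and illustrates the correspondence with an example. Your sketch is a more explicit unpacking of exactly that citation --- the Hurwitz-arc/handle-decomposition construction of a chart from a monodromy representation, and the reconstruction of a Lefschetz fibration from the local Dehn-twist data at black vertices --- so there is nothing to correct, and your identification of the bookkeeping issues (closing up relator words with white vertices inside a disk, and matching the meridian orientation conventions) as the only nontrivial points, to be discharged by citing \cite{Kamada2007, Hasegawa2006}, is precisely what the paper does.
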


We call such $\Gamma$ as in Proposition \ref{correspondence} (1) 
a chart {\it corresponding to $f$}, 
and such $f$ as in Proposition \ref{correspondence} (2) 
a Lefschetz fibration {\it described by $\Gamma$}. 

Instead of giving a proof of Proposition \ref{correspondence}, 
we show an example of a chart and describe the correspondence of the chart to 
a Hurwitz system of a Lefschetz fibration. 

\begin{exam} Let $B$ be a $2$--sphere. 
We consider a chart $\Gamma$ in $B$ with base point $b_0$ and 
a system $(\gamma_1,\gamma_2,\gamma_3,\gamma_4)$ of loops based at $b_0$, 
which is determined by a Hurwitz arc system $\mathcal{A}$ 
for the set $\Delta_{\Gamma}$ of black vertices of $\Gamma$, 
as in Figure \ref{exampleA}. 
The intersection words of the loops with respect to $\Gamma$ are 
\begin{align*}
w_{\Gamma}(\gamma_1) & =\zeta_1^{-1}\zeta_2^{-1}\zeta_1\zeta_2\zeta_1, \quad 
w_{\Gamma}(\gamma_2) =\zeta_1^{-1}\zeta_3\zeta_1, \\
w_{\Gamma}(\gamma_3) & =\zeta_2^{-1}\zeta_3^{-1}\zeta_2^{-1}\zeta_3\zeta_2, \quad 
w_{\Gamma}(\gamma_4) =\zeta_2^{-1}, 
\end{align*}
each of which represents the image $\rho_{\Gamma}(a_i)$ of the homotopy class 
$a_i$ of $\gamma_i$ under  the homomorphism 
$\rho_{\Gamma}:\pi_1(B-\Delta_{\Gamma},b_0)\rightarrow \mathcal{M}_g$. 
Since the group $\pi_1(B-\Delta_{\Gamma},b_0)$ has a presentation 
$\langle a_1,a_2,a_3,a_4\, |\, a_1a_2a_3a_4=1 \rangle$, 
$\rho_{\Gamma}$ is determined by the system 
$(\rho_{\Gamma}(a_1),\rho_{\Gamma}(a_2),\rho_{\Gamma}(a_3), \rho_{\Gamma}(a_4))$, 
which is a Hurwitz system of a certain Lefschetz fibration of genus $g$ over $B$ 
because each $\rho_{\Gamma}(a_i)$ is a Dehn twist. 
Note that the product $w_{\Gamma}(\gamma_1)w_{\Gamma}(\gamma_2)
w_{\Gamma}(\gamma_3)w_{\Gamma}(\gamma_4)$ of the intersection words 
represents the identity of $\mathcal{M}_g$. 
\end{exam}

\begin{figure}[ht!]
\labellist
\footnotesize \hair 2pt
\pinlabel $b_0$ [t] at 175 184
\pinlabel $1$ [b] at 30 156
\pinlabel $2$ [b] at 46 95
\pinlabel $1$ [b] at 112 77
\pinlabel $2$ [b] at 186 77
\pinlabel $1$ [r] at 154 103
\pinlabel $3$ [t] at 137 129
\pinlabel $3$ [b] at 191 115
\pinlabel $3$ [r] at 208 25
\pinlabel $2$ [r] at 243 50
\pinlabel $2$ [t] at 283 114
\pinlabel {\small $\gamma_1$} [r] at 70 135
\pinlabel {\small $\gamma_2$} [t] at 152 166
\pinlabel {\small $\gamma_3$} [t] at 233 142
\pinlabel {\small $\gamma_4$} [b] at 278 162
\endlabellist
\centering
\includegraphics[scale=0.75]{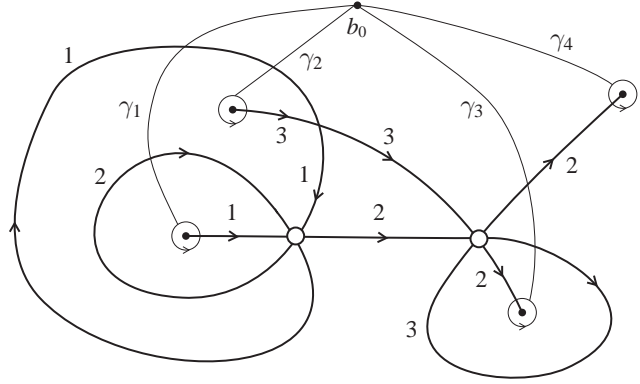}
\caption{Monodromy of a chart $\Gamma$}
\label{exampleA}
\end{figure}

\begin{thm}\label{classification} 
Suppose that $g$ is greater than two. 
Let $f$ and $f'$ be Lefschetz fibrations of genus $g$ over $B$, 
and $\Gamma$ and $\Gamma'$ 
charts corresponding to $f$ and $f'$, respectively. 
Then $f$ is strictly isomorphic to $f'$ if and only if 
$\Gamma$ is transformed to $\Gamma'$ by a finite sequence of chart moves of type W, 
chart moves of transitions, chart moves of conjugacy type, 
and ambient isotopies of $B$ relative to $b_0$. 
\end{thm}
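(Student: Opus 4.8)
The plan is to establish the two implications separately, following the standard template for proving that a combinatorial (chart) presentation classifies geometric objects (Lefschetz fibrations) up to the appropriate equivalence, as in Kamada~\cite{Kamada2007}, Hasegawa~\cite{Hasegawa2006}, and the genus-one and genus-two cases treated in \cite{KMMW2005, Kamada2012, EK2013, EK2014}.

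\textbf{The ``if'' direction.} Here I would check that each elementary chart move preserves the strict isomorphism class of the Lefschetz fibration described by the chart. By Proposition~\ref{correspondence}, a chart $\Gamma$ determines a monodromy representation $\rho_\Gamma$, hence (over the fixed base $B$ with base point $b_0$) a Lefschetz fibration $f$ up to strict isomorphism; so it suffices to see that each move either leaves $\rho_\Gamma$ literally unchanged or changes it only by a modification that does not affect the strict isomorphism class. A chart move of type W takes place inside a disk $D$ disjoint from $b_0$ and from all black vertices, so the set $\Delta_\Gamma$ of black vertices is unchanged; one verifies that the intersection word of any loop based at $b_0$ is altered only by insertion/deletion of a relator in $\mathcal{R}\cup\mathcal{R}^{-1}$ or by a trivial (cancelling) insertion, hence $\rho_\Gamma$ is unchanged as a homomorphism $\pi_1(B-\Delta_\Gamma,b_0)\to\mathcal{M}_g$ — this uses precisely Wajnryb's relations of Theorem~\ref{pres}, which is why the white vertices are modelled on $\mathcal{R}$. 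A chart move of transition replaces a black vertex of type $s$ by one of type $s'$ where $wsw^{-1}$ and $s'$ represent the same mapping class; this is exactly the Hurwitz-type move on the vanishing cycle data around one critical value and leaves the isomorphism class of $f$ intact because a Dehn twist along a curve and along its image under an ambient diffeomorphism give the same (achiral) Lefschetz fibration germ. A chart move of conjugacy type changes the choice of reference arc from $b_0$, i.e. replaces the monodromy representation by a conjugate one compatible with a path-change, which again does not change the strict isomorphism class. Finally ambient isotopy of $B$ relative to $b_0$ visibly does nothing to the strict isomorphism class. Combining these, any finite sequence of such moves keeps $f$ in its strict isomorphism class.

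\textbf{The ``only if'' direction.} This is the substantive half. Suppose $f$ and $f'$ are strictly isomorphic. Choose the base point $b_0$ and identifications so that the isomorphism is witnessed by $H\co M\to M'$ with $h\co B\to B$ isotopic to the identity rel $b_0$; then $f$ and $f'$ have the same critical-value set $\Delta$ (after an isotopy absorbed into $h$) and conjugate monodromy representations differing only by an overall conjugation coming from the choice of $\Phi$ versus $\Phi'$ — which is realized by chart moves of conjugacy type. Hence we are reduced to the case where $f$ and $f'$ have \emph{literally the same} monodromy representation $\rho=\rho'\co\pi_1(B-\Delta,b_0)\to\mathcal{M}_g$ for some finite $\Delta\subset B-\{b_0\}$, and $\Gamma,\Gamma'$ are two charts on $B$ with $\rho_\Gamma=\rho=\rho_{\Gamma'}$, possibly after a type-W birth move adjusting the black-vertex set of $\Gamma'$ to coincide with that of $\Gamma$ (creation/annihilation of cancelling pairs of critical values; here one must check $\Delta_\Gamma$ and $\Delta_{\Gamma'}$ can be matched up, which uses that the number and cyclic order of type-$s$ vertices around $b_0$ are recorded by $\rho$ together with a Hurwitz system, cf.\ the $S^2$ example). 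The claim then becomes: two charts on $B$ inducing the same homomorphism $\pi_1(B-\Delta,b_0)\to\mathcal{M}_g$ with the same black vertices are related by type-W moves and transitions. This is exactly the general chart-uniqueness theorem of Kamada~\cite{Kamada2007} (equivalently Hasegawa~\cite{Hasegawa2006}) applied to the presentation $\langle\mathcal{X}\mid\mathcal{R}\rangle$ of $\mathcal{M}_g$ from Theorem~\ref{pres} with ``label'' set $\mathcal{S}$ for black vertices: one first moves all black vertices to a standard position near $b_0$ in a prescribed cyclic order (using type-W moves to slide the white part and hoops out of the way, and transitions to normalize the type-$s$ labels), reducing to the case of a chart in a disk with black vertices only on the boundary; there the statement is the C-move theorem for presentations, proved by the usual van Kampen / pushing-off-the-$2$-cells argument in the presentation complex. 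I would quote this and merely indicate how the global surface $B$ (as opposed to a disk) is handled: cut $B$ along a wedge of loops generating $\pi_1(B,b_0)$ to a disk, carry the argument there, and check compatibility of the moves with the regluing, noting that the ``far-commutation'' and other relations of $\mathcal{M}_g$ are exactly what is needed to push edges across the cut.

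\textbf{Main obstacle.} The delicate point is the reduction in the ``only if'' direction: matching up the black vertices of $\Gamma$ and $\Gamma'$ and then invoking the abstract chart-classification theorem for the presentation $\langle\mathcal{X}\mid\mathcal{R}\rangle$ over a \emph{closed} surface rather than a disk. One must be careful that (i) cancelling pairs of black vertices (births/deaths of type-$s$ pairs) genuinely correspond to the creation of a pair of critical values with inverse vanishing cycles, so that they are invisible to the strict isomorphism class — this is where condition~(iii) of Definition~\ref{LF} forbidding $(\pm1)$-spheres must be tracked, since a cancelling pair could in principle produce a bad fiber; and (ii) the passage from $\pi_1(B-\Delta,b_0)$ over a closed $B$ to the disk case does not introduce extra relations among the $\zeta_i$ beyond those in $\mathcal{R}$, which is guaranteed because Theorem~\ref{pres} is a \emph{presentation} and the general theory of \cite{Kamada2007} is stated for arbitrary closed surfaces. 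Since the problem explicitly says a full proof is replaced by the $S^2$ example, in the paper I would state Theorem~\ref{classification}, give the ``if'' verification in a few lines as above, and for ``only if'' refer the reader to \cite{Kamada2007, Hasegawa2006} for the disk case and to the corresponding arguments in \cite{Kamada2012, EK2013} for the closed-surface bookkeeping, indicating that the only genus-specific input is Wajnryb's presentation of Theorem~\ref{pres}.
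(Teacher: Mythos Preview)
Your approach is essentially the one the paper takes: the paper does not give an independent proof but simply states that Proposition~\ref{correspondence} and Theorem~\ref{classification} follow from the classification of Lefschetz fibrations due to Kas~\cite{Kas1980} and Matsumoto~\cite{Matsumoto1996} combined with Kamada's general chart theory~\cite[Sections~4--8]{Kamada2007}. Your sketch is a faithful elaboration of how those two ingredients fit together.

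One small inaccuracy worth flagging: in the ``only if'' direction you speak of a ``type-W birth move adjusting the black-vertex set of $\Gamma'$'' and of ``cancelling pairs of black vertices.'' By definition, chart moves of type~W take place in a disk containing \emph{no} black vertices, so they cannot create or destroy black vertices; births/deaths of type~W concern only hoops and white vertices. This is not actually a problem for the argument, since strictly isomorphic Lefschetz fibrations already have the same critical-value set (up to the ambient isotopy absorbed into $h$), so $\Delta_\Gamma$ and $\Delta_{\Gamma'}$ agree from the outset and no black-vertex creation is needed. The related worry you raise about condition~(iii) of Definition~\ref{LF} is therefore moot. With that correction your outline matches the paper's intended route.
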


Proposition \ref{correspondence} and Theorem \ref{classification} 
follow from a classification theorem of Lefschetz fibrations due to 
Kas \cite{Kas1980} and Matsumoto \cite{Matsumoto1996} 
together with fundamental theorems on charts and chart moves by Kamada 
\cite[Sections 4--8]{Kamada2007}. 

We end this subsection with a definition and chart description of fiber sums of 
Lefschetz fibrations. 
Let $f\co M\rightarrow B$ and $f'\co M'\rightarrow B'$ be Lefschetz fibrations of genus $g$. 
Take regular values $b_0\in B$ and $b_0'\in B'$ of $f$ and $f'$, 
and small disks $D_0\subset B-\Delta$ and $D'_0\subset B-\Delta'$ 
near $b_0$ and $b'_0$, respectively. 
Consider general fibers $F_0:=f^{-1}(b_0)$ and $F'_0:=f'^{-1}(b_0')$ 
and orientation preserving diffeomorphisms $\Phi\co \Sigma_g\rightarrow F_0$ 
and $\Phi'\co\Sigma_g\rightarrow F'_0$, respectively. 

\begin{defn} 
Let $\Psi\co \Sigma_g\rightarrow \Sigma_g$ be an orientation preserving diffeomorphism 
and $r\co \partial D_0\rightarrow \partial D'_0$ an orientation reversing diffeomorphism. 
The new manifold $M\#_F M'$ obtained by glueing 
$M-f^{-1}({\rm Int}\, D_0)$ and $M'-f'^{-1}({\rm Int}\, D'_0)$ by 
$(\Phi'\circ\Psi\circ\Phi^{-1})\times r$ 
admits a Lefschetz fibration 
$f\#_{\Psi}\, f'\co M\#_F M'\rightarrow B\# B'$ of genus $g$. 
We call $f\#_{\Psi}\, f'$ the {\it fiber sum} of $f$ and $f'$ with respect to $\Psi$. 
Although 
the diffeomorphim type of $M\#_F M'$ and the isomorphism type of $f\#_{\Psi}\, f'$ 
depend on a choice of the diffeomorphism $\Psi$ in general, 
we often abbreviate $f\#_{\Psi}\, f'$ as $f\#\, f'$. 
\end{defn}

Let $\Gamma$ and $\Gamma'$ be charts corresponding to $f$ and $f'$, 
and $D_0$ and $D'_0$ small disks near $b_0$ and $b_0'$ disjoint from 
$\Gamma$ and $\Gamma'$, respectively. 
Connecting $B-{\rm Int}\, D_0$ with $B'-{\rm Int}\, D'_0$ by a tube, 
we have a connected sum $B\# B'$ of $B$ and $B'$. 
Let $w$ be a word in $\mathcal{X}\cup\mathcal{X}^{-1}$ which represents 
the mapping class of $\Psi$ in $\mathcal{M}_g$. 
Let $\Gamma\#_w\Gamma'$ be the union of $\Gamma$, $\Gamma'$, and 
hoops on the tube representing $w$ (see Figure \ref{fibersum}). 
Then the fiber sum $f\#_{\Psi}\, f'$ is described by 
this new chart $\Gamma\#_w\Gamma'$ in $B\# B'$ with base point $b_0$. 
If the word $w$ is trivial, then the chart $\Gamma\#_w\Gamma'$ is denoted also 
by $\Gamma\oplus\Gamma'$, 
which is called a {\it product} of $\Gamma$ and $\Gamma'$. 

\begin{figure}[ht!]
\labellist
\footnotesize \hair 2pt
\pinlabel $b_0$ [r] at 128 38
\pinlabel $b'_0$ [l] at 210 38
\pinlabel $B$ [br] at 3 57
\pinlabel $B'$ [bl] at 330 59
\pinlabel $w$ [t] at 170 13
\pinlabel $\Gamma$ at 71 40
\pinlabel $\Gamma'$ at 270 40
\endlabellist
\centering
\includegraphics[scale=0.8]{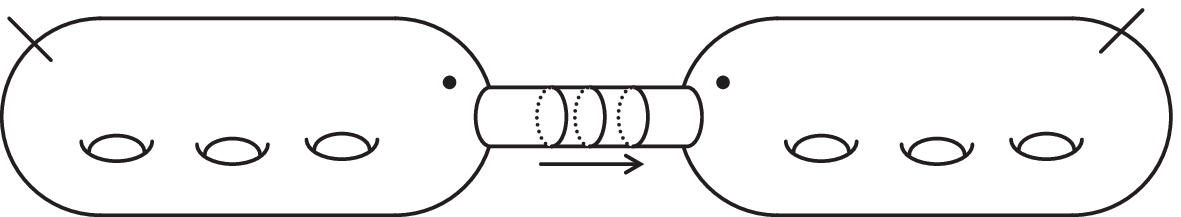}
\caption{Chart $\Gamma\#_w\Gamma'$ in $B\# B'$}
\label{fibersum}
\end{figure}

%%%%%%%%%%%  Section 3  %%%%%%%%%%%

\section{Signature of Lefschetz fibrations}

%%%%%%%%%%%%%%%%%%%%%%%%%%%%

In this section we review the signature cocycle discovered by Meyer 
and prove a signature theorem for Lefschetz fibrations. 

\subsection{Meyer's signature cocycle}

In this subsection we give a brief survey on Meyer's signature cocycle. 
We begin with the definition of the signature cocycle. 
Let $g$ be a positive integer. 

\begin{defn}[Meyer \cite{Meyer1973}]\label{cocycle} 
For $A,B\in {\rm Sp}(2g,\Bbb{Z})$, we consider the vector space
\[
V_{A,B}:=\{ (x,y)\in \Bbb{R}^{2g}\times\Bbb{R}^{2g}\, |\, (A^{-1}-I_{2g})x+(B-I_{2g})y=0\}
\]
and the bilinear form $\langle \; ,\; \rangle_{A,B}\co V_{A,B}\times V_{A,B}\rightarrow\Bbb{R}$ 
defined by 
\[
\langle (x_1,y_1),(x_2,y_2)\rangle_{A,B}:=(x_1+y_1)\cdot J(I_{2g}-B)y_2, 
\]
where $\cdot$ is the standard inner product of $\Bbb{R}^{2g}$ and 
$J=\left(\begin{smallmatrix}
0 & I_g \\
-I_g & 0 
\end{smallmatrix}\right)$. Since $\langle \; ,\; \rangle_{A,B}$ is symmetric, we can define 
an integer $\tau_g(A,B)$ to be the signature of $(V_{A,B}, \langle \; ,\; \rangle_{A,B})$. 
The map $\tau_g\co {\rm Sp}(2g,\Bbb{Z})\times {\rm Sp}(2g,\Bbb{Z})\rightarrow \Bbb{Z}$ 
is called the {\it signature cocycle}. 
\end{defn}

Let $P$ be a compact connected oriented surface of genus $0$ with three boundary 
components and $\pi\co E\rightarrow P$ 
a fiber bundle over $P$ with fiber $\Sigma_g$ and structure group ${\rm Diff}_+\Sigma_g$. 
The fundamental group $\pi_1(P,*)$ of $P$ with base point $*$ is a free group generated 
by two loops $a$ and $b$ depicted in Figure \ref{pants}. 
If we take an orientation preserving diffeomorphism $\Sigma_g\rightarrow \pi^{-1}(*)$, 
we obtain the monodromy representation $\pi_1(P,*)\rightarrow \mathcal{M}_g$ which sends 
$a$ to $\alpha$ and $b$ to $\beta$. 
Since $\mathcal{M}_g^*$ acts on $H:=H_1(\Sigma_g;\Bbb{Z})$ preserving the intersection form, 
we have a representation $\mathcal{M}_g^*\rightarrow {\rm Sp}(2g,\Bbb{Z})$ 
by fixing a symplectic basis on $H$. Let $A$ and $B$ denote matrices corresponding to 
$\alpha$ and $\beta$, respectively. 

\begin{figure}[ht!]
\labellist
\footnotesize \hair 2pt
\pinlabel $P_1$ [br] at 1 56
\pinlabel $P_2$ [bl] at 117 53
\pinlabel $*$ [b] at 58 59
\pinlabel $a$ [tl] at 41 16
\pinlabel $b$ [tr] at 75 16
\endlabellist
\centering
\includegraphics[scale=0.8]{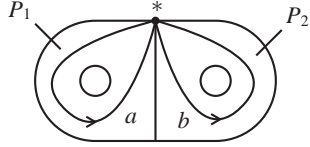}
\caption{Pair of pants $P$}
\label{pants}
\end{figure}

Meyer closely studied the signature of the total space $E$ to obtain the following theorem. 

\begin{thm}[Meyer \cite{Meyer1973}]\label{meyer} 
The signature $\sigma(E)$ of $E$ is equal to $-\tau_g(A,B)$. 
\end{thm}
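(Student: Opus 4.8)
The plan is to compute $\sigma(E)$ directly from a handle/Mayer--Vietoris decomposition of the total space $E$ and to recognize the answer as the signature of Meyer's bilinear form $\langle\;,\;\rangle_{A,B}$. Write $P = P_1 \cup P_2$ as the pair of pants split along an arc, with $\pi_1(P,*)$ free on the loops $a,b$ of Figure \ref{pants}, and let $c$ be the boundary loop with $c = (ab)^{-1}$ up to the relevant convention, so that the monodromies around the three boundary circles are $\alpha$, $\beta$, and $(\alpha\beta)^{-1}$ (equivalently the matrices $A$, $B$, $(AB)^{-1}$ in ${\rm Sp}(2g,\mathbb{Z})$). The first step is to describe $E$ as a $\Sigma_g$--bundle glued from the trivial piece over each $P_i$: up to homotopy $E$ is obtained from $(\Sigma_g \times P)$ by the mapping-torus-type identifications determined by $\alpha$ and $\beta$ along the two ``handles'' of the pair of pants.

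Next I would apply the Novikov additivity of the signature together with a careful bookkeeping of the intersection form on $H_2(E;\mathbb{R})$. The only classes that can contribute a nonzero self-intersection pairing are the ``horizontal'' ones: those built from cycles in $H = H_1(\Sigma_g;\mathbb{Z})$ that are simultaneously invariant (in the appropriate twisted sense) under the monodromies, which is exactly the condition $(A^{-1}-I_{2g})x + (B-I_{2g})y = 0$ defining $V_{A,B}$. Concretely, a pair $(x,y) \in V_{A,B}$ determines a $2$--chain in $E$ by taking $x\times(\text{arc in }P_1)$ and $y\times(\text{arc in }P_2)$ and capping off using the invariance relations; its boundary lies over $\partial P$ and is killed because the fibered piece over a collar of $\partial P$ has vanishing contribution. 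One then computes the intersection number of two such $2$--cycles, using that the fiber intersection form is given by $J$ and that the cap-off contributes the Poincar\'e-dual twist $(I_{2g}-B)$; the outcome is precisely $(x_1+y_1)\cdot J(I_{2g}-B)y_2 = \langle (x_1,y_1),(x_2,y_2)\rangle_{A,B}$. It also has to be checked that $H_2(E;\mathbb{R})$ modulo this ``horizontal'' part carries a metabolic (hyperbolic) form coming from fiber class paired with a section-type class, contributing zero to the signature, so that $\sigma(E)$ equals the signature of $(V_{A,B},\langle\;,\;\rangle_{A,B})$ up to an overall sign.

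Finally I would fix the sign. The form as written in Definition \ref{cocycle} is (as Meyer shows) symmetric, and a direct comparison of orientation conventions --- the counterclockwise orientation of the boundary loops of $P$ versus the sign in $f(z_1,z_2)=z_1z_2$, and the convention that $\mathcal{M}_g$ acts on the right while $\mathcal{M}_g^*$ acts on $H$ on the left --- forces the relation $\sigma(E) = -\tau_g(A,B)$ rather than $+\tau_g(A,B)$. This is the step I expect to be the main obstacle: not the existence of the identification, but getting every sign and every ``$A$ versus $A^{-1}$'' consistent between the geometric intersection pairing on $E$, the algebraic form on $V_{A,B}$, and the chosen homological orientation conventions. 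I would settle it once and for all on a low-genus model (e.g.\ $g=1$ with explicit elliptic or Dehn-twist monodromies, where both sides are computable by hand) and then invoke naturality of the construction in $A,B$ to conclude in general.
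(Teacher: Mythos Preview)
Your approach attempts a direct computation of the intersection form on $H_2(E;\mathbb{R})$ by building explicit $2$--cycles, whereas the paper (following Gambaudo--Ghys) takes a cleaner route through Wall's non-additivity theorem. The paper writes $P$ as a boundary sum of two annuli $P_1,P_2$; each $M_i:=\pi^{-1}(P_i)$ is a mapping torus times an interval and hence has signature zero, so Wall's formula gives $\sigma(E)=-i(L_1,L_2,L_3)$ with $i$ the ternary Maslov index of three Lagrangians in $H_1(\Sigma_g\sqcup\Sigma_g;\mathbb{R})\cong H\oplus H$. One then writes $L_1,L_2,L_3$ as the graphs of $\alpha_*^{-1}$, $\mathrm{id}$, and $\beta_*$, computes $(L_1+L_3)\cap L_2$, and checks by pure linear algebra that the resulting symmetric form is isomorphic to Meyer's $(V_{A,B},\langle\;,\;\rangle_{A,B})$. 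No cycle-by-cycle construction in $E$ is needed, and the sign comes directly from Wall's formula rather than from a low-genus check.

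There is a genuine gap in your proposal. You invoke Novikov additivity, but the pieces $M_1,M_2$ meet along a $3$--manifold with boundary (the fiber $\Sigma_g\times I$ over the splitting arc), which is exactly the situation where Novikov additivity \emph{fails} and Wall's correction term --- the Maslov index --- enters. Since $\sigma(M_1)=\sigma(M_2)=0$, the entire content of the theorem is that correction term; you cannot bypass it without essentially reproving Wall's result. Your construction of $2$--chains from pairs $(x,y)\in V_{A,B}$ and the ``capping off using the invariance relations'' is also not a proof as it stands: $E$ has three boundary components, the chains you describe have boundary in the fibers over $\partial P$, and to obtain honest classes on which the signature form is defined you must track what happens over collars of all three boundary circles and show the remaining part of $H_2$ lies in the radical --- none of which is carried out. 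Finally, fixing the sign by a single example only works once you have already established $\sigma(E)=\pm\tau_g(A,B)$ with a \emph{universal} sign, which your argument does not do; the Wall--Maslov approach yields the sign automatically.
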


Theorem \ref{meyer} and Novikov's additivity implies that 
$\tau_g$ is a $2$--cocycle of ${\rm Sp}(2g,\Bbb{Z})$. 

We recall a Maslov index for a triple of Lagrangian subspaces and 
Wall's non-additivity theorem, which are used in the proof of Theorem \ref{meyer}. 

Let $V$ be a real vector space of dimension $2n$, 
$\omega\in \Lambda^2V^*$ a symplectic form on $V$, and 
$\Lambda(V,\omega)$ the Lagrangian Grassmannian of $(V,\omega)$, 
which is the set of Lagrangian subspaces of $(V,\omega)$. 
For $L_1,L_2,L_3\in \Lambda(V,\omega)$, the bilinear form
\begin{align*}
\Psi & \co (L_3+L_1)\cap L_2\times (L_3+L_1)\cap L_2\rightarrow \Bbb{R}\co 
(v,w)\mapsto \omega(v,w_3) \\
& (v,w\in (L_3+L_1)\cap L_2, \, w=w_1+w_3 \; (w_1\in L_1,w_3\in L_3)) 
\end{align*}
is symmetric. We define an integer $i(L_1,L_2,L_3)$ to be the signature of 
$((L_3+L_1)\cap L_2, \Psi)$, which 
is called the {\it ternary Maslov index} of the triple $(L_1,L_2,L_3)$. 

Let $M_1, M_2$ be compact oriented smooth $4$--manifolds, 
$X_1,X_2,X_3$ compact oriented smooth $3$--manifolds, 
and $\Sigma$ a closed oriented smooth $2$--manifold. 
We assume that $M=M_1\cup M_2, \, \partial M_1=X_1\cup X_2,\, \partial M_2=X_2\cup X_3,\, 
\partial X_1=\partial X_2=\partial X_3=\Sigma$, and 
the orientations of these manifolds satisfy 
\begin{gather*}
[M]=[M_1]+[M_2], \; 
\partial_*[M_1]=[X_2]-[X_1], \; \partial_*[M_2]=[X_3]-[X_2], \\
\partial_*[X_1] =\partial_*[X_2]=\partial_*[X_3]=[\Sigma].
\end{gather*}
Let $\omega\co V\times V\rightarrow \Bbb{R}$ 
be the intersection form on $V:=H_1(\Sigma;\Bbb{R})$ 
and $L_i$ the kernel of the homomorphism $V\rightarrow H_1(X_i;\Bbb{R})$ 
induced by the inclusion $\Sigma\rightarrow X_i$ for $i=1,2,3$. 
Since $L_i\in\Lambda(V,\omega)$ for $i=1,2,3$, we can define the ternary 
Maslov index $i(L_1,L_2,L_3)$ of the triple $(L_1,L_2,L_3)$.

\begin{thm}[Wall \cite{Wall1969}]\label{wall}
$\sigma(M)=\sigma(M_1)+\sigma(M_2)-i(L_1,L_2,L_3)$. 
\end{thm}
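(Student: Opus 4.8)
The plan is to reduce the equality to a computation of the intersection form on $H_2(M;\mathbb{R})$ and to follow Wall's homological analysis of the decomposition $M=M_1\cup_{X_2}M_2$. Recall that for a compact oriented $4$--manifold $N$ with boundary, $\sigma(N)$ is the signature of the intersection pairing $\phi_N$ on $H_2(N;\mathbb{R})$; this pairing has radical $\mathrm{rad}\,\phi_N=\ker\!\big(H_2(N;\mathbb{R})\to H_2(N,\partial N;\mathbb{R})\big)$, and $\sigma(N)$ equals the signature of the induced nondegenerate form on $H_2(N;\mathbb{R})/\mathrm{rad}\,\phi_N$. I would work throughout with the Mayer--Vietoris sequence of $M=M_1\cup_{X_2}M_2$ together with the long exact sequences of the pairs $(M_i,\partial M_i)$, $(M,\partial M)$ and $(X_i,\Sigma)$, keeping careful track of the orientation conventions $\partial_*[M_1]=[X_2]-[X_1]$, $\partial_*[M_2]=[X_3]-[X_2]$, $\partial_*[X_i]=[\Sigma]$ imposed in the statement, and of the fact that by Poincar\'e--Lefschetz duality each $L_i=\ker\!\big(H_1(\Sigma;\mathbb{R})\to H_1(X_i;\mathbb{R})\big)$ is a Lagrangian subspace of $\big(H_1(\Sigma;\mathbb{R}),\omega\big)$.

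The first step is to split off the ``Novikov part''. Put $K_i:=\mathrm{im}\big(H_2(M_i;\mathbb{R})\to H_2(M;\mathbb{R})\big)$ and $K:=K_1+K_2$. Since $M_1\cap M_2=X_2$ is $3$--dimensional, $2$--cycles supported in $\mathrm{Int}\,M_1$ and in $\mathrm{Int}\,M_2$ can be made disjoint, so $K_1$ and $K_2$ are $\phi_M$--orthogonal; a diagram chase in the relative sequences then identifies $\phi_M|_{K_i}$, modulo its radical, with the nondegenerate form computing $\sigma(M_i)$, so that $\phi_M|_K$ has signature $\sigma(M_1)+\sigma(M_2)$. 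When $\Sigma=\emptyset$ one has $K+\mathrm{rad}\,\phi_M=H_2(M;\mathbb{R})$ and the statement is precisely Novikov additivity; in general the quotient $Q:=H_2(M;\mathbb{R})\big/\!\big(K+\mathrm{rad}\,\phi_M\big)$ is nontrivial, and by additivity of the signature over an orthogonal decomposition modulo radical the theorem reduces to showing that $\phi_M$ induces on a space isomorphic to $Q$ a nondegenerate form of signature $-\,i(L_1,L_2,L_3)$.

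The heart of the argument is the identification of $Q$ and of this induced form. In the Mayer--Vietoris sequence
\[
\cdots\longrightarrow H_2(M_1;\mathbb{R})\oplus H_2(M_2;\mathbb{R})\longrightarrow H_2(M;\mathbb{R})\longrightarrow H_1(X_2;\mathbb{R})\longrightarrow H_1(M_1;\mathbb{R})\oplus H_1(M_2;\mathbb{R})
\]
the connecting homomorphism $\partial$ carries $Q$ injectively into $\ker\!\big(H_1(X_2;\mathbb{R})\to H_1(M_1;\mathbb{R})\oplus H_1(M_2;\mathbb{R})\big)$, and a class of $Q$ is represented by a $2$--cycle $S_1\cup_c S_2$ with $S_i\subset M_i$ a surface satisfying $\partial S_i=\pm c$ for a common $1$--cycle $c$ lying in a collar of $\Sigma$ inside $X_2$. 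Pushing $c$ onto $\Sigma$ and chasing the exact sequences of $(X_i,\Sigma)$ identifies $Q$ with the reduced space $\big((L_3+L_1)\cap L_2\big)\big/\big((L_1\cap L_2)+(L_2\cap L_3)\big)$; on the larger space $(L_3+L_1)\cap L_2$ the Maslov form $\Psi$ defined above is degenerate with exactly this radical, so the descended form has signature $i(L_1,L_2,L_3)$. Finally, computing $\phi_M\big(S_1\cup_c S_2,\,S_1'\cup_{c'}S_2'\big)$ inside a tubular neighborhood $\Sigma\times D^2$ of $\Sigma$ in $M$, across which $X_1,X_2,X_3$ appear as radial $3$--dimensional walls, reduces the self-intersection of correction cycles to a local model whose value is $\Psi(v,w)=\omega(v,w_3)$ with $v=[c]$, $w=[c']$, $w=w_1+w_3$, $w_1\in L_1$, $w_3\in L_3$; taking signatures yields $\sigma(M)-\sigma(M_1)-\sigma(M_2)=-\,i(L_1,L_2,L_3)$.

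I expect the principal difficulty to be precisely this last identification: verifying that the form transported from $Q$ onto $(L_3+L_1)\cap L_2$ is \emph{literally} the Maslov form $\Psi$ rather than merely a form with the same signature, and pinning down the overall sign so that the correction term is $-\,i(L_1,L_2,L_3)$ and not its negative, which forces one to propagate the orientation conventions through every exact sequence and through the local model. A secondary but genuine technical point is the handling of radicals: the radical of $\phi_M$ restricted to a subspace is not in general that subspace intersected with $\mathrm{rad}\,\phi_M$, so the orthogonal splitting of the intersection form on $H_2(M;\mathbb{R})$ into the $K$--part and the $Q$--part modulo radical, and the asserted equalities of signatures, must each be justified by explicit diagram chases in the Mayer--Vietoris and relative exact sequences. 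These chases, which make up the bulk of Wall's original argument, are where the real work lies.
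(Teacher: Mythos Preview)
The paper does not actually prove Theorem~\ref{wall}: it is quoted as a result of Wall \cite{Wall1969} and used as a black box in the proof of Theorem~\ref{meyer}. So there is no ``paper's own proof'' to compare against.

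That said, your outline is essentially Wall's original strategy: separate off the contributions of $H_2(M_1)$ and $H_2(M_2)$ via Mayer--Vietoris to recover the Novikov additive part $\sigma(M_1)+\sigma(M_2)$, identify the leftover quotient with the reduced space $\big((L_3+L_1)\cap L_2\big)/\big((L_1\cap L_2)+(L_2\cap L_3)\big)$, and check that the induced form there is (minus) the Maslov form $\Psi$. You are right that the genuine content lies in the diagram chases and in pinning down the sign; your proposal is honest about not carrying these out. As a plan it is sound and faithful to the original source, but as written it remains a sketch rather than a proof: the identification of $Q$ with the Maslov quotient and the local computation of $\phi_M$ near $\Sigma$ are asserted rather than established, and those are exactly the steps where errors (in particular sign errors, given the orientation conventions $\partial_*[M_1]=[X_2]-[X_1]$, etc.) would creep in.
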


Gambaudo and Ghys \cite{GG2005} 
(and independently the first author) made use of Theorem \ref{wall} 
to give the following proof of Theorem \ref{meyer}. 
See also Gilmer and Masbaum \cite{GM2011}. 

\begin{proof}[Proof of Theorem \ref{meyer}] 
Consider $P$ to be a boundary sum of two annuli $P_1$ and $P_2$ (see Figure \ref{pants}). 
We set $M:=E$, $M_i:=\pi^{-1}(P_i)\, (i=1,2)$, $X_2:=M_1\cap M_2$, 
$X_1:=\partial M_1-{\rm Int}\, X_2$, $X_3:=\partial M_3-{\rm Int}\, X_2$, 
and $\Sigma:=\partial X_2$. 
Applying Theorem \ref{wall} to these manifolds, 
we have 
\[
\sigma(E)=\sigma(M_1)+\sigma(M_2)-i(L_1,L_2,L_3)
=-i(L_1,L_2,L_3)
\]
because each of $M_1$ and $M_2$ is a product of a mapping torus with an interval, 
which has signature zero. 
Since the bordered component of $X_i$ is diffeomorphic to $I\times\Sigma_g$ 
for $i=1,2,3$, we put $V:=H\oplus H$, $\omega:=\mu\oplus (-\mu)$, and obtain 
\begin{align*}
L_1 & =\{ (-\xi, \alpha_*^{-1}(\xi))\in V\, |\, \xi\in H\}, \quad 
L_2=\{ (-\xi,\xi)\in V\, |\, \xi\in H\}, \\
L_3 & =\{ (-\xi, \beta_*(\xi))\in V\, |\, \xi\in H\}, 
\end{align*}
where $H$ is the first homology $H_1(\Sigma_g;\Bbb{R})$ of $\Sigma_g$ 
and $\mu\co H\times H\rightarrow\Bbb{R}$ 
is the intersection form of $\Sigma_g$. It is easily seen that 
the subspace $(L_1+L_3)\cap L_2$ is written as
\[
(L_1+L_3)\cap L_2 =\{ (-\xi-\eta, \alpha_*^{-1}(\xi)+\beta_*(\eta))\in V \, |\, 
\xi+\eta=\alpha_*^{-1}(\xi)+\beta_*(\eta)\, (\xi,\eta\in H) \}
\]
and the symmetric bilinear form $\Psi$ on $(L_1+L_3)\cap L_2$ is written as
\[
\Psi((-\xi-\eta,\alpha_*^{-1}(\xi)+\beta_*(\eta)),(-\xi'-\eta',\alpha_*^{-1}(\xi')+\beta_*(\eta'))) 
=
\mu(\xi+\eta,({\rm id}-\beta_*)(\eta')). 
\]
We consider the vector space 
\[
U_{\alpha,\beta}:=\{ (\xi,\eta)\in V\, |\, 
(\alpha_*^{-1}-{\rm id})(\xi)+(\beta_*-{\rm id})(\eta)=0 \}
\]
and the symmetric bilinear form $\langle \; ,\;\rangle_{\alpha,\beta}$ 
on $U_{\alpha,\beta}$ defined by 
\[
\langle (\xi,\eta),(\xi',\eta')\rangle_{\alpha,\beta}
:=\mu(\xi+\eta,({\rm id}-\beta_*)(\eta')) 
\quad 
((\xi,\eta),(\xi',\eta')\in U_{\alpha,\beta}). 
\] 
Since the linear map $U_{\alpha,\beta}\rightarrow (L_1+L_3)\cap L_2
\co (\xi,\eta)\mapsto (-\xi-\eta,\xi+\eta)$ is compatible with the bilinear forms, 
the signature of $((L_1+L_3)\cap L_2,\Psi)$ is equal to that of 
$(U_{\alpha,\beta},\langle \; ,\; \rangle_{\alpha,\beta})$, 
which is isomorphic to $(V_{A,B},\langle \; ,\; \rangle_{A,B})$ 
under a choice of a symplectic basis of $H$. 
Therefore we conclude that 
$i(L_1,L_2,L_3)=\tau_g(A,B)$. 
\end{proof}

\begin{rem} It is known that $\tau_g$ is a normalized, symmetric $2$--cocycle of 
${\rm Sp}(2g,\Bbb{Z})$ and invariant under conjugation. 
The cohomology class $[\tau_g]\in H^2({\rm Sp}(2g,\Bbb{Z});\Bbb{Z})$ 
corresponds to $-4c_1$ under homomorphisms: 
\[
H^2({\rm Sp}(2g,\Bbb{Z});\Bbb{Z})\leftarrow 
H^2(B{\rm Sp}(2g,\Bbb{R});\Bbb{Z})\cong 
H^2(BU(g);\Bbb{Z})\cong \Bbb{Z}.
\]
For more details see Meyer \cite{Meyer1973}, Turaev \cite{Turaev1987}, 
Barge and Ghys \cite{BG1992}, and Kuno \cite{Kuno2012}. 
\end{rem}

\subsection{A signature formula}

In this subsection we describe the signature of a Lefschetz fibration of genus 
greater than two in terms of charts. 
Let $g$ be an integer greater than two. 

Let $B$ be a connected closed oriented surface and $\Gamma$ a chart in $B$. 
We denote the number of white vertices of type $r_F(i,j)$ 
(resp. $r_B(i),\, r_C,\, r_L,\, r_H$) 
minus the number of white vertices 
of type $r_F(i,j)^{-1}$ 
(resp. $r_B(i)^{-1},\, r_C^{-1},\, r_L^{-1},\, r_H^{-1}$)
included in $\Gamma$ by $n_F(i,j)(\Gamma)$ 
(resp. $n_B(i)(\Gamma),\, n_C(\Gamma),\, n_L(\Gamma),\, n_H(\Gamma)$). 
Similarly, 
we denote the number of black vertices of type $\ell_0(i)^{\pm 1}$ 
(resp. $\ell_h^{\pm 1}$) included in $\Gamma$ 
by $n_0^{\pm}(i)(\Gamma)$ (resp. $n_h^{\pm}(\Gamma)$), 
and set $n_0(i)(\Gamma):=n_0^+(i)(\Gamma)-n_0^-(i)(\Gamma)$ 
(resp. $n_h(\Gamma):=n_h^+(\Gamma)-n_h^-(\Gamma)$) 
and $n_0^{\pm}(\Gamma):=\sum_{i=0}^{2g}n_0^{\pm}(i)(\Gamma)$. 

\begin{defn}\label{sigma} 
The number 
\[
\sigma(\Gamma):=
-6\, n_C(\Gamma)-n_L(\Gamma)+\sum_{h=1}^{[g/2]}(4h(h+1)-1)\, n_h(\Gamma)
\]
is called the {\it signature} of $\Gamma$. 
\end{defn}

Let $f\co M\rightarrow B$ be a Lefschetz fibration of genus $g$ and $\Gamma$ 
a chart in $B$ corresponding to $f$. 
The purpose of this subsection is to show the following theorem. 

\begin{thm}\label{signature} 
The signature $\sigma(M)$ of $M$ is equal to $\sigma(\Gamma)$. 
\end{thm}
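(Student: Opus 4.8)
The plan is to reduce the computation of $\sigma(M)$ to a sum of local contributions, one for each white vertex of $\Gamma$, via Meyer's signature cocycle and Novikov additivity. First I would recall that the signature of a closed $4$-manifold fibred (with singularities) over a surface can be computed from the monodromy: decompose $B$ into a polygonal $2$-complex so that each $2$-cell is a disk containing at most one black vertex of $\Gamma$ and is disjoint from all white vertices, with the white vertices sitting in a neighbourhood of the $1$-skeleton. Over the $1$-skeleton and over the small disks around black vertices the fibration is essentially a mapping torus times an interval or a standard Lefschetz fibre over a disk, whose signatures are known. By Wall non-additivity (Theorem~\ref{wall}), iterated in the form that produces Meyer's cocycle (Theorem~\ref{meyer}), each ``pair of pants'' in the decomposition contributes a term $-\tau_g(A,B)$ in the symplectic representations of the two monodromies being multiplied. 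Summing, $\sigma(M)$ becomes a sum of values of $\tau_g$ on the images in $\mathrm{Sp}(2g,\mathbb{Z})$ of the generators read off along the edges of $\Gamma$, corrected by the local Lefschetz contributions $(-1)$ for each positive singular fibre and $(+1)$ for each negative one (these are the signatures of the $D^2$-neighbourhoods of black vertices of type I; for separating vanishing cycles the local model still has signature $\mp 1$). Each of these pieces is independent of the ambient chart structure.

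Next I would show that this total depends on $\Gamma$ only through the quantities $n_C(\Gamma)$, $n_L(\Gamma)$, $n_h(\Gamma)$, and the black-vertex counts. The crucial point is that $\tau_g$ is a $2$-cocycle of $\mathrm{Sp}(2g,\mathbb{Z})$ which is normalized, symmetric, and conjugation-invariant (the Remark after Theorem~\ref{meyer}); consequently, when one assembles the cocycle contributions along the edges of $\Gamma$, the sum telescopes and the only surviving terms are the ``defects'' at the white vertices, i.e.\ the evaluations of the coboundary of $\tau_g$ on the relators in $\mathcal{R}$. Concretely, a white vertex of type $r\in\mathcal{R}$ contributes the value of a fixed $1$-cochain obtained from $\tau_g$ evaluated on the word $r$ in $\mathrm{Sp}(2g,\mathbb{Z})$; I would denote this value by $I_\tau(r)$. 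For the far-commutation and braid relators, and for the hyperelliptic relator $r_H$, one checks directly that $I_\tau = 0$ (these relators map to relations already satisfied in $\mathrm{Sp}(2g,\mathbb{Z})$, or more precisely the relevant Maslov-index cocycle sum vanishes by normalization and symmetry). For the $3$-chain relator $r_C$ and the lantern relator $r_L$ the value is nonzero, and an explicit $\mathrm{Sp}$-matrix computation — exactly the kind done by Endo–Nagami in the genus-two–over–$S^2$ case, which I would cite — gives $I_\tau(r_C) = -6$ and $I_\tau(r_L) = -1$. For a black vertex of type $\ell_h^{\pm}$ (separating vanishing cycle bounding a genus-$h$ surface) the local signature is $\mp(4h(h+1)-1)$: this is the signature of the corresponding ``star'' neighbourhood, computed again via Wall's theorem applied to the genus-$h$ subsurface, and is precisely the classical contribution of a type-$\mathrm{II}_h$ singular fibre.

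Assembling, $\sigma(M) = -\bigl(n_0^+(\Gamma) - n_0^-(\Gamma)\bigr)\cdot 1 + \sum_h \bigl(-(4h(h+1)-1)\bigr)\cdot\bigl(n_h^+(\Gamma)-n_h^-(\Gamma)\bigr) \;+\; I_\tau(r_C)\,n_C(\Gamma) + I_\tau(r_L)\,n_L(\Gamma)$ plus the vanishing relator terms. Wait — I must be careful: the type-$\mathrm{I}$ black-vertex terms and the far-commutation/braid white-vertex terms should cancel against each other, since a chart with only those features describes a fibration whose signature is already accounted for by the ``standard'' pieces; this cancellation is forced by the fact that the empty chart describes a fibre bundle (signature zero by Meyer, as $\Sigma_g\times\Sigma_{g'}$-type) and by the invariance of $\sigma(M)$ under the chart moves of Theorem~\ref{classification} — in particular birth/death of white-vertex pairs and of hoops, and transitions, none of which change the right-hand side of Definition~\ref{sigma}. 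So the surviving terms are exactly $-6\,n_C(\Gamma) - n_L(\Gamma) + \sum_{h=1}^{[g/2]}(4h(h+1)-1)\,n_h(\Gamma) = \sigma(\Gamma)$, as claimed.

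I expect the main obstacle to be the bookkeeping of orientations and the precise identification of the local contribution at each type of white vertex: getting the sign and magnitude of $I_\tau(r_C)$ and $I_\tau(r_L)$ right requires a genuine (if finite) computation in $\mathrm{Sp}(2g,\mathbb{Z})$ with Meyer's quadratic form, and one must verify stability in $g$ — i.e.\ that these values do not depend on $g\geq 3$ — which follows because $r_C$ and $r_L$ involve only the curves $c_0,\dots,c_6$ and $\tau_g$ restricts compatibly under the stabilization $\mathrm{Sp}(2g,\mathbb{Z})\hookrightarrow\mathrm{Sp}(2g+2,\mathbb{Z})$. A secondary subtlety is that charts live over an arbitrary closed oriented surface $B$, not just $S^2$, so the decomposition of $B$ must be chosen carefully and one uses that the contributions along the genus-handles of $B$ cancel (the monodromy around a handle being a commutator, on which the coboundary of $\tau_g$ vanishes); invariance under ambient isotopy and the chart moves of Theorem~\ref{classification} then finishes the argument, since both sides of the asserted equality are chart-move invariants.
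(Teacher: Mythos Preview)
Your strategy is in the right spirit but differs substantially from the paper's proof and, as written, has real gaps. The paper does \emph{not} attempt a direct telescoping cocycle computation over $B$. Instead it reduces to the already-known $S^2$ case (Theorem~\ref{nagami}) by a chart trick: for every edge $e$ of $\Gamma$ one adjoins a free edge $\Gamma_e$ (describing an elementary Lefschetz fibration over $S^2$, signature zero); channel changes then use each $\Gamma_e$ to \emph{cut} the corresponding edge of $\Gamma$, so that the chart becomes a disjoint union of trees. A forest in $B$ describes the fiber sum of a Lefschetz fibration over $S^2$ with the trivial $\Sigma_g$-bundle over $B$, and neither $\sigma(M)$ nor $\sigma(\Gamma)$ changes in this process. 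Over $S^2$ one then applies chart moves of transition to eliminate all white vertices of types $r_F^{\pm1}, r_B^{\pm1}, r_H^{\pm1}$, leaving only copies of the elementary charts $L_0(i), L_h, L_h^*, R_C, R_C^*, R_L, R_L^*$, each of which is evaluated directly via Theorem~\ref{nagami} and the explicit Endo--Nagami computations. In particular the paper never computes the defect at an $r_H$ vertex; it removes those vertices by transitions.

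Your gaps are these. (1)~The claim that the $r_H$-defect vanishes because ``these relators map to relations already satisfied in $\mathrm{Sp}(2g,\mathbb{Z})$'' is no argument: \emph{every} relator does so, including $r_C$ and $r_L$, whose defects are nonzero. (The vanishing is in fact true because $r_H$ is a commutator, but you must say this and justify it.) (2)~Your treatment of a higher-genus base is hand-waved; ``the monodromy around a handle is a commutator on which the coboundary of $\tau_g$ vanishes'' does not parse into a proof, and this is precisely the difficulty the paper's free-edge cutting trick is designed to bypass. (3)~The paragraph beginning ``Wait --- I must be careful'' never actually resolves the discrepancy: your assembled formula carries a spurious $-(n_0^+ - n_0^-)$ term and the wrong sign on the $n_h$ term, and you then appeal to invariance of $\sigma(\Gamma)$ under chart moves to dispose of them. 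But combinatorial invariance of $\sigma(\Gamma)$ under general type-W moves is exactly what is \emph{not} known independently of the theorem (see the Remark immediately following the statement), so that appeal is circular.
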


\begin{rem} 
It immediately follows from Theorem \ref{signature} that 
$\sigma(\Gamma)$ is invariant under chart moves of type W and 
chart moves of transition. 
Although any combinatorial proof of this fact does not seem to be known, 
Hasegawa \cite{Hasegawa2006} proved that $\sigma(\Gamma)$ is invariant 
under chart moves of transitions by a purely combinatorial method 
on the assumption that it is invariant under chart moves of type W. 
\end{rem}

% the definition of $i_g$ and the main theorem of EN

Let $\tilde{\mathcal{X}}$ be the set of right-handed Dehn twists along simple closed curves 
in $\Sigma_g$ and $\tilde{\mathcal{R}}$ the set of words in 
$\tilde{\mathcal{X}}\cup\tilde{\mathcal{X}}^{-1}$ representing an element of 
the kernel of the natural epimorphism 
from the free group generated by $\tilde{\mathcal{X}}$ to $\mathcal{M}_g$.

\begin{defn} 
For a word $w=\alpha_1\cdots \alpha_n\in\tilde{\mathcal{R}}$, we define an integer 
\[
I_g(w):=-\sum_{j=1}^{n-1} \tau_g
(\overline{\alpha_{n-j}},\overline{\alpha_{n-j+1}}\cdots\overline{\alpha_n})
-s(w), 
\]
where $\tau_g$ is the signature cocycle (Definition \ref{cocycle}), 
$\overline{\alpha}$ is the image of $\alpha\in\tilde{\mathcal{X}}\cup\tilde{\mathcal{X}}^{-1}$ 
under the composition of the natural map 
$\tilde{\mathcal{X}}\cup\tilde{\mathcal{X}}^{-1}\rightarrow\mathcal{M}_g$ and 
a natural epimorphism $\mathcal{M}_g^*\rightarrow {\rm Sp}(2g,\Bbb{Z})$, 
and $s(w)$ is the number of Dehn twists along separating simple closed curves 
included in $w$. 
\end{defn}

Suppose that $B$ is a $2$--sphere. 
If we choose a monodromy representation $\rho$ and 
a Hurwitz arc system $\mathcal{A}$ for $\Delta$ with base point $b_0$, 
we have a Hurwitz system $(\alpha_1,\ldots ,\alpha_n)\in (\mathcal{M}_g)^n$ of $f$. 
Since $\alpha_1,\ldots ,\alpha_n$ are Dehn twists and $\alpha_1\cdots \alpha_n=1$ 
in $\mathcal{M}_g$, we think $(\alpha_1,\ldots ,\alpha_n)$ as a word 
$w:=\alpha_1\cdots \alpha_n$ in $\tilde{\mathcal{R}}$. 
Theorem \ref{meyer} and Novikov's additivity for signature imply the next theorem. 

\begin{thm}[Endo and Nagami \cite{EN2004}]\label{nagami}
The signature $\sigma(M)$ of $M$ is equal to $I_g(w)$. 
\end{thm}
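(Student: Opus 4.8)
The plan is to deduce Theorem~\ref{nagami} from Meyer's signature theorem (Theorem~\ref{meyer}) together with Novikov additivity, by cutting the total space $M$ of the Lefschetz fibration over $S^2$ into pieces that are either product pieces, pieces covered by pairs of pants (to which Theorem~\ref{meyer} applies), or small neighbourhoods of singular fibers. Concretely, choose a monodromy representation $\rho$ and a Hurwitz arc system $\mathcal{A}$ for $\Delta=\{b_1,\dots,b_n\}$ with base point $b_0$, giving the Hurwitz system $(\alpha_1,\dots,\alpha_n)$ with $\alpha_1\cdots\alpha_n=1$ in $\mathcal{M}_g$, where each $\alpha_k$ is a Dehn twist along a vanishing cycle. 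First I would fix small disjoint disks $D_1,\dots,D_n$ in $S^2$ with $b_k\in D_k$, and let $M_k:=f^{-1}(D_k)$ be a neighbourhood of the $k$-th singular fiber. The signature $\sigma(M_k)$ is $0$ if $\alpha_k$ is a twist along a non-separating curve and $\mp 1$ if $\alpha_k$ is a twist along a separating curve of positive/negative type (this is a standard local computation, and the sign convention is why $-s(w)$ appears in $I_g$); more precisely $\sigma(M_k) = -1$ for a positive separating twist and $+1$ for a negative separating twist, which one checks from the local holomorphic model and a plumbing description, or cites from Ozbagci / Endo.

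The remaining piece $M_0 := f^{-1}(S^2-\bigcup_k \mathrm{Int}\,D_k)$ is a surface bundle over an $n$-holed sphere $B_0$. The next step is to decompose $B_0$ as an iterated boundary sum of $n-1$ pairs of pants $P^{(2)},\dots,P^{(n)}$, arranged so that the boundary circles align with the arcs of $\mathcal{A}$: the $j$-th pair of pants $P^{(j)}$ is glued along one boundary circle encircling $b_j$, and its two other boundary circles carry monodromies $\alpha_{j+1}\cdots\alpha_n$ (on the incoming side, reading in the direction compatible with the Hurwitz ordering) and $\alpha_j\cdots\alpha_n$ (on the outgoing side) — using $\alpha_1\cdots\alpha_n=1$ this is consistent, and the product-piece contributions vanish. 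Applying Theorem~\ref{meyer} to $f^{-1}(P^{(j)})$ and unwinding the sign and ordering conventions relating the left and right actions (recall $\mathcal{M}_g\to\mathcal{M}_g^*$ is an anti-isomorphism, and the symplectic representation is taken on $\mathcal{M}_g^*$), one gets $\sigma(f^{-1}(P^{(j)})) = -\tau_g(\overline{\alpha_{n-j+1}},\, \overline{\alpha_{n-j+2}}\cdots\overline{\alpha_n})$ for the appropriate reindexing, so that summing over $j$ reproduces the sum $-\sum_{j=1}^{n-1}\tau_g(\overline{\alpha_{n-j}},\overline{\alpha_{n-j+1}}\cdots\overline{\alpha_n})$ in the definition of $I_g(w)$.

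Finally, applying Novikov additivity repeatedly to the decomposition $M = M_0 \cup M_1 \cup \cdots \cup M_n$ along the (closed, separating) boundary $3$-manifolds $f^{-1}(\partial D_k)$, and using that the gluing $3$-manifolds are mapping tori so there is no correction term beyond additivity, I obtain
\[
\sigma(M) = \sigma(M_0) + \sum_{k=1}^n \sigma(M_k)
= -\sum_{j=1}^{n-1}\tau_g(\overline{\alpha_{n-j}},\overline{\alpha_{n-j+1}}\cdots\overline{\alpha_n}) - s(w) = I_g(w),
\]
where $s(w)$ collects exactly the $\pm 1$ contributions from the separating singular fibers with the correct signs. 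The main obstacle I anticipate is purely bookkeeping: getting the ordering of the $\alpha_k$ inside the cocycle arguments to match Meyer's convention (which is stated for the left action / symplectic representation) against the right action used for $\mathcal{M}_g$ in this paper, together with tracking orientations of the pieces $P^{(j)}$ and of $\partial D_k$ so that Novikov additivity and Theorem~\ref{meyer} are applied with consistent signs. Once that indexing is pinned down, the computation of $\sigma(M_k)$ for the local models and the telescoping of the pair-of-pants decomposition are routine.
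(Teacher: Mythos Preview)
Your proposal is correct and is precisely the argument the paper has in mind: the paper's entire proof is the single sentence ``Theorem~\ref{meyer} and Novikov's additivity for signature imply the next theorem,'' and your pair-of-pants decomposition of the $n$-holed sphere together with the local computation of $\sigma(M_k)$ is exactly how one unpacks that sentence. One small caution on bookkeeping: for the equality $\sum_k \sigma(M_k) = -s(w)$ to hold in the achiral setting you need $s(w)$ to be the \emph{signed} count of separating Dehn twists (positive minus negative), which is the convention of Endo--Nagami even though the phrasing ``the number of'' in the present paper is slightly loose.
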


We are now ready to prove Theorem \ref{signature}. 

\begin{proof}[Proof of Theorem \ref{signature}] 
%Let $N$ be the number of edges of $\Gamma$. 
Choose a base point $b_0\in B-\Gamma$ 
and a disk $D$ in $B-\Gamma$ centered at $b_0$. 
We denote the set of edges of $\Gamma$ by $E(\Gamma)$. 
For each $e\in E(\Gamma)$, we choose a point $b_e$ in a region of $B-\Gamma$ 
adjacent to $e$, 
and a simple path $\gamma_e$ from $b_e$ to $b_0$ 
which intersects with edges of $\Gamma$ 
transversely and does not intersect with vertices of $\Gamma$. 
Let $w_e$ be the intersection word of $\gamma_e$ with respect to $\Gamma$ and 
$i_e\in\{0,1,\ldots ,2g\}$ the label of $e$. 
We choose a family $\{D_e\}_{e\in E(\Gamma)}$ of mutually disjoint disks included in $D$ 
and put the chart $\Gamma_e$ depicted in Figure \ref{chartE} in $D_e$ for each $e$. 

\begin{figure}[ht!]
\labellist
\footnotesize \hair 2pt
\pinlabel $w_e^{-1}$ [r] at 0 34
\pinlabel $i_e$ [t] at 62 31
\endlabellist
\centering
\includegraphics[scale=0.8]{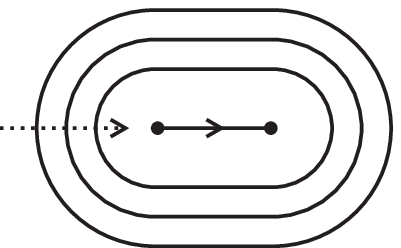}
\caption{Chart $\Gamma_e$}
\label{chartE}
\end{figure}

Taking the union of $\Gamma$ with $\Gamma_e$ for all $e\in E(\Gamma)$, 
we obtain a new chart $\Gamma_1$ in $B$, 
which describes a fiber sum $f_1\co M_1\rightarrow B$ 
of $f$ with Lefschetz fibrations over $S^2$ described by a free edge. 
For each $e\in E(\Gamma)$, we apply channel changes 
as in Figure \ref{channelC} to let a free edge 
pass through the edges intersecting with $\gamma_e$. 
We then apply a channel change as in Figure \ref{channelD} to `cut' $e$ into two edges. 
Thus we obtain a new chart $\Gamma_2$ in $B$. 

\begin{figure}[ht!]
\labellist
\footnotesize \hair 2pt
\pinlabel $e$ [b] at 1 199
\pinlabel $b_e$ [b] at 16 158
\pinlabel $i_e$ [l] at 7 128
\pinlabel $\gamma_e$ [b] at 40 160
\pinlabel $w_e$ [tl] at 85 131
\pinlabel $w_e^{-\negthinspace 1}$ [b] at 111 157
\pinlabel $b_0$ [b] at 90 158
\pinlabel $i_e$ [t] at 172 152
\pinlabel $\Gamma_e$ [l] at 230 156
\pinlabel $e$ [b] at 1 84
\pinlabel $i_e$ [l] at 7 13
\pinlabel $w_e$ [tl] at 84 16
\pinlabel $i_e$ [t] at 172 38
\pinlabel $w_e^{-\negthinspace 1}$ [b] at 172 85
\endlabellist
\centering
\includegraphics[scale=0.8]{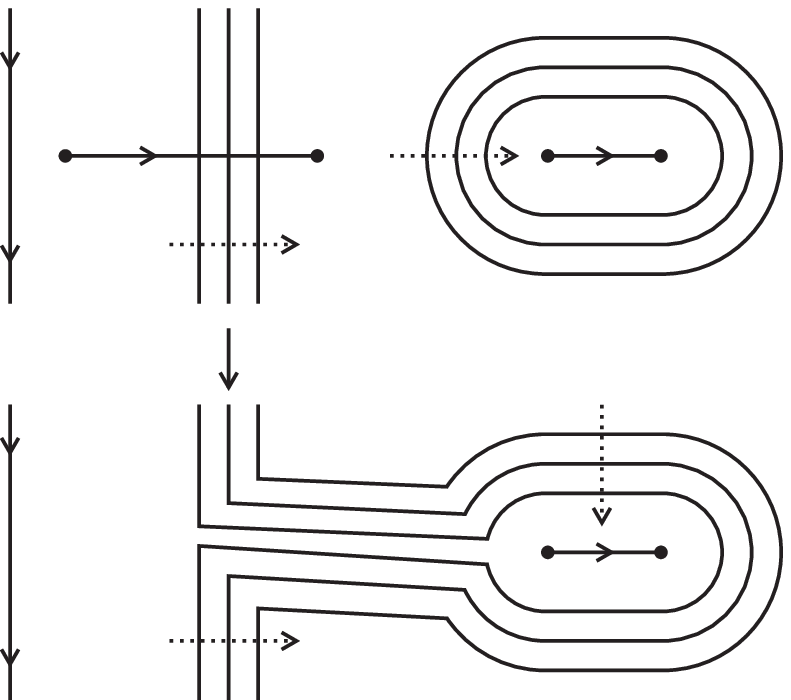}
\caption{Channel change}
\label{channelC}
\end{figure}

\begin{figure}[ht!]
\labellist
\footnotesize \hair 2pt
\pinlabel $e$ [b] at 3 86
\pinlabel $i_e$ [l] at 7 13
\pinlabel $i_e$ [l] at 70 41
\pinlabel $i_e$ [l] at 176 13
\pinlabel $i_e$ [l] at 176 72
\endlabellist
\centering
\includegraphics[scale=0.8]{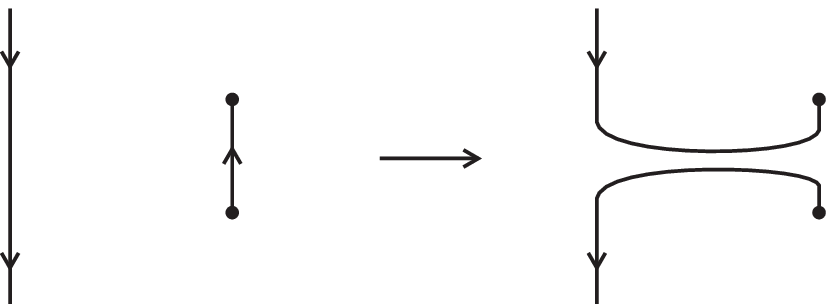}
\caption{Channel change}
\label{channelD}
\end{figure}

Since each component of $\Gamma_2$ is a tree, 
a Lefschetz fibration $f_2\co M_2\rightarrow B$ 
corresponding to $\Gamma_2$ is a fiber sum of a Lefschetz fibration 
$f_3\co M_3\rightarrow S^2$ 
with a trivial $\Sigma_g$--bundle over $B$. 
Drawing a copy of $\Gamma_2$ in $S^2$, we have a chart $\Gamma_3$ corresponding to 
$f_3$. 
The signature of a Lefschetz fibration over $S^2$ described by a free edge is equal to zero 
because $\tau_g(A,A^{-1})=0$ for any $A\in {\rm Sp}(2g,\Bbb{Z})$ 
(see Meyer \cite[Section 2]{Meyer1973}). 
Hence we have 
\[
\sigma(M)=\sigma(M_1)=\sigma(M_2)=\sigma(M_3)+\sigma(\Sigma_g\times B)
=\sigma(M_3)
\]
by Theorem \ref{classification} and Novikov's additivity. 
Since we did not change the numbers of white vertices and black vertices of type $\ell_h^{\pm}$ 
to make $\Gamma_3$ from $\Gamma$, 
we see $\sigma(\Gamma_3)=\sigma(\Gamma)$. 
Hence we only have to show $\sigma(M_3)=\sigma(\Gamma_3)$ 
in order to conclude $\sigma(M)=\sigma(\Gamma)$. 

Applying chart moves of transition to each component of $\Gamma_3$ 
as in Figure \ref{transition}, 
we remove white vertices of type $r_F(i,j)^{\pm 1}, r_B(i)^{\pm 1},r_H^{\pm 1}$ 
to obtain a union of copies of 
$L_0(i), L_h, L_h^*, R_C, R_C^*, R_L, R_L^*$, 
where $L_0(i), L_h, R_C, R_L$ 
are charts depicted in Figure \ref{chartB} and Figure \ref{chartF}, 
and $L_h^*$ (resp. $R_C^*$, 
$R_L^*$) is the mirror image of $L_h$ 
(resp. $R_C$, $R_L$) with edges orientation reversed. 
For the proof of $\sigma(M_3)=\sigma(\Gamma_3)$, it is enough to show 
that the signature of a Lefschetz fibration described by each of these charts coincides 
with the signature of the chart. 

\begin{figure}[ht!]
\labellist
\footnotesize \hair 2pt
\pinlabel $j$ [br] at 34 192
\pinlabel $i$ [bl] at 64 192
\pinlabel $i$ [tr] at 34 161
\pinlabel $j$ [tl] at 64 161
\pinlabel $j$ [br] at 103 192
\pinlabel $i$ [bl] at 134 192
\pinlabel $j$ [tl] at 134 161
\pinlabel $i$ [br] at 172 195
\pinlabel $i\negthinspace +\negthinspace 1$ [bl] at 209 195
\pinlabel $i\negthinspace +\negthinspace 1$ [r] at 165 177
\pinlabel $i$ [l] at 216 177
\pinlabel $i$ [tr] at 171 157
\pinlabel $i\negthinspace +\negthinspace 1$ [tl] at 209 157
\pinlabel $i$ [br] at 273 195
\pinlabel $i\negthinspace +\negthinspace 1$ [bl] at 309 195
\pinlabel $i\negthinspace +\negthinspace 1$ [t] at 267 175
\pinlabel $i$ [l] at 317 177
\pinlabel $i\negthinspace +\negthinspace 1$ [tl] at 310 157
\pinlabel $2g$ [b] at 43 125
\pinlabel $1$ [b] at 69 125
\pinlabel $1$ [b] at 78 125
\pinlabel $2g$ [b] at 103 125
\pinlabel $u^{-\negthinspace 1}$ [l] at 144 81
\pinlabel $1$ [l] at 142 62
\pinlabel $u$ [l] at 144 44
\pinlabel $2g$ [t] at 103 0
\pinlabel $1$ [t] at 78 0
\pinlabel $1$ [t] at 69 0
\pinlabel $2g$ [t] at 43 0
\pinlabel $u^{-\negthinspace 1}$ [r] at 2 44
\pinlabel $1$ [r] at 6 63
\pinlabel $u$ [r] at 2 80
\pinlabel $1$ [b] at 210 64
\pinlabel $u$ [b] at 238 119
\pinlabel $2g$ [b] at 255 114
\pinlabel $1$ [b] at 281 114
\pinlabel $1$ [b] at 290 114
\pinlabel $2g$ [b] at 316 114
\pinlabel $u^{-\negthinspace 1}$ [b] at 332 119
\endlabellist
\centering
\includegraphics[scale=0.8]{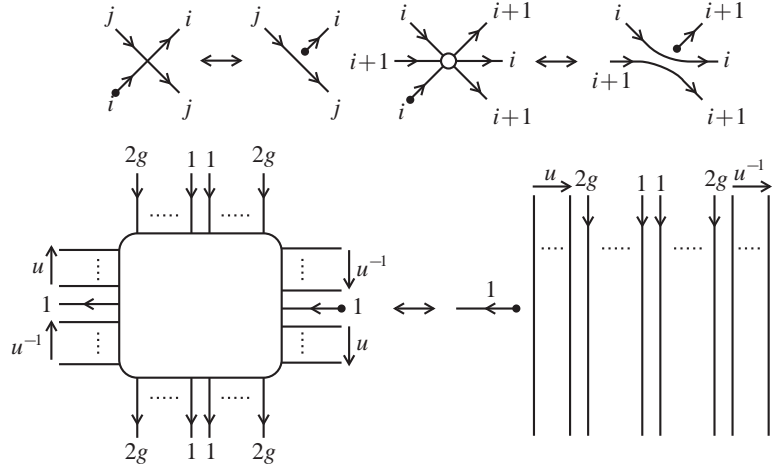}
\caption{Chart moves of transition}
\label{transition}
\end{figure}

\begin{figure}[ht!]
\labellist
\footnotesize \hair 2pt
\pinlabel $i$ [b] at 25 28
\pinlabel $1$ [b] at 218 50
\pinlabel $2h$ [b] at 199 50
\pinlabel $1$ [b] at 188 50
\pinlabel $2h$ [b] at 169 50
\pinlabel $1$ [b] at 135 50
\pinlabel $2h$ [b] at 113 50
\endlabellist
\centering
\includegraphics[scale=0.8]{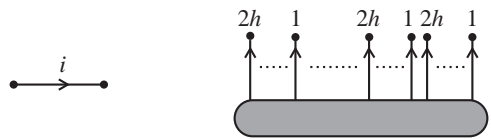}
\caption{Charts $L_0(i)$ and $L_h$}
\label{chartB}
\end{figure}

\begin{figure}[ht!]
\labellist
\footnotesize \hair 2pt
\pinlabel $1$ [b] at 124 83
\pinlabel $2$ [b] at 115 83
\pinlabel $3$ [b] at 107 83
\pinlabel $1$ [b] at 98 83
\pinlabel $2$ [b] at 91 83
\pinlabel $3$ [b] at 82 83
\pinlabel $1$ [b] at 73 83
\pinlabel $2$ [b] at 64 83
\pinlabel $3$ [b] at 56 83
\pinlabel $1$ [b] at 48 83
\pinlabel $2$ [b] at 39 83
\pinlabel $3$ [b] at 31 83
\pinlabel $0$ [t] at 9 5
\pinlabel $4$ [t] at 18 5
\pinlabel $3$ [t] at 27 5
\pinlabel $2$ [t] at 35 5
\pinlabel $1$ [t] at 43 5
\pinlabel $1$ [t] at 52 5
\pinlabel $2$ [t] at 61 5
\pinlabel $3$ [t] at 69 5
\pinlabel $4$ [t] at 78 5
\pinlabel $0$ [t] at 86 5
\pinlabel $4$ [t] at 94 5
\pinlabel $3$ [t] at 103 5
\pinlabel $2$ [t] at 112 5
\pinlabel $1$ [t] at 120 5
\pinlabel $1$ [t] at 129 5
\pinlabel $2$ [t] at 138 5
\pinlabel $3$ [t] at 146 5
\pinlabel $4$ [t] at 154 5
\pinlabel $\delta_3$ [b] at 216 88
\pinlabel $1$ [b] at 234 83
\pinlabel $3$ [b] at 243 83
\pinlabel $5$ [b] at 251 83
\pinlabel $\tau_1$ [b] at 267 88
\pinlabel $\tau_2$ [b] at 293 88
\pinlabel $0$ [t] at 315 4
\pinlabel $\tau_2^{-\negthinspace 1}$ [t] at 297 1
\pinlabel $\tau_1^{-\negthinspace 1}$ [t] at 273 1
\pinlabel $\tau_2$ [t] at 248 1
\pinlabel $0$ [t] at 230 4
\pinlabel $\tau_2^{-\negthinspace 1}$ [t] at 214 1
\pinlabel $0$ [t] at 196 4
\endlabellist
\centering
\includegraphics[scale=0.8]{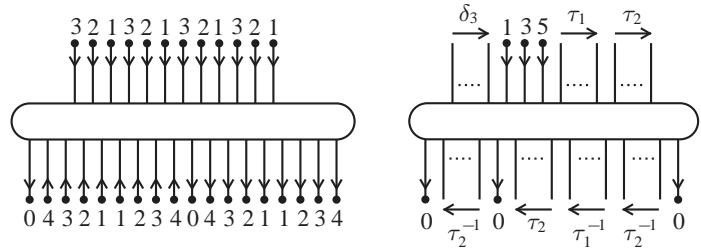}
\caption{Charts $R_C$ and $R_L$}
\label{chartF}
\end{figure}

Let $\Gamma_4$ be one of $L_0(i), L_h, L_h^*, 
R_C, R_C^*, R_L, R_L^*$ drawn in $S^2$ 
and $f_4\co M_4\rightarrow S^2$ a Lefschetz fibration described by $\Gamma_4$. 
If $\Gamma_4$ is equal to $L_0(i)$, it is easily seen that 
$\sigma(M_4)=\sigma(\Gamma_4)$. 
If $\Gamma_4$ is equal to $L_h$, 
the word $\ell_h^{-1}\sigma_h$ corresponds to a Hurwitz system of $f_4$ 
(see Figure \ref{chartB}), 
where $\sigma_h$ is a right-handed Dehn twist 
along the curve $s_h$ depicted in Figure \ref{curves}. 
Thus we have 
\[
\sigma(M_4)=I_g(\ell_h^{-1}\sigma_h)=4h(h+1)-1=\sigma(\Gamma_4)
\]
from Definition \ref{sigma}, Theorem \ref{nagami}, 
and explicit computations for $I_g$ due to Endo and Nagami 
\cite[Lemma 3.5, Proposition 3.9]{EN2004}. 
If $\Gamma_4$ is equal to $R_C$ (resp. $R_L$), 
the word $r_C$ (resp. $r_L$) corresponds to a Hurwitz system of $f_4$ 
(see Figure \ref{chartF}). 
Thus we have 
\[
\sigma(M_4)=I_g(r_C)=-6=\sigma(\Gamma_4) 
\quad 
({\rm resp.} \;\sigma(M_4)=I_g(r_L)=-1=\sigma(\Gamma_4))
\]
from Definition \ref{sigma}, Theorem \ref{nagami}, 
and formulae of Endo and Nagami 
\cite[Lemma 3.5, Remark 3.7, Propositions 3.9 and 3.10]{EN2004}. 
Suppose that $\Gamma_4$ is equal to one of $L_h^*, R_C^*, R_L^*$. 
The mirror image $\Gamma_4^*$ of $\Gamma_4$ with edges orientation reversed 
corresponds to the Lefschetz fibration $f_4\co -M_4\rightarrow S^2$
with total space orientation reversed. 
Hence we have 
\[
\sigma(M_4)=-\sigma(-M_4)=-\sigma(\Gamma_4^*)=\sigma(\Gamma_4)
\]
because we have already shown that $\sigma(M_4)=\sigma(\Gamma_4)$ 
is valid for $\Gamma_4=L_h, R_C, R_L$. 
This completes the proof of Theorem \ref{signature}. 
\end{proof}

%%%%%%%%%%%  Section 4  %%%%%%%%%%%

\section{Stabilization theorems}

%%%%%%%%%%%%%%%%%%%%%%%%%%%%

In this section we prove two theorems on stabilization of Lefschetz fibrations 
under taking fiber sums with copies of a fixed Lefschetz fibration. 

Following Auroux \cite{Auroux2005}, 
we first introduce a notion of universality for Lefschetz fibrations. 
Suppose that $g$ is greater than two. 

\begin{defn} 
A Lefschetz fibration of genus $g$ over $S^2$ is called {\it universal} if 
it is irreducible, chiral, and 
it contains $2g+1$ singular fibers of type ${\rm I}^+$ whose vanishing cycles 
$a_0,a_1,\ldots ,a_{2g}\subset \Sigma_g$ satisfies the following conditions: 
(i) $a_i$ and $a_{i+1}$ intersect transversely at one point for every $i\in\{1,\ldots ,2g-1\}$; 
(ii) $a_0$ and $a_4$ intersect transversely at one point; 
(iii) $a_i$ and $a_j$ does not intersect for other pairs $(i,j)$. 
A Lefschetz fibration over $S^2$ 
is universal if and only if it is described by a chart $\Gamma_0$ depicted in Figure \ref{universal} 
by virtue of Proposition \ref{correspondence}, 
where the blank labeled with ${\rm T}_0$ is filled only with edges, white vertices, 
and black vertices of type $\ell_0(i)$. 
\end{defn}

\begin{figure}[ht!]
\labellist
\footnotesize \hair 2pt
\pinlabel $0$ [b] at 16 57
\pinlabel $1$ [b] at 25 57
\pinlabel $2g$ [b] at 53 57
\pinlabel $T_0$ at 36 13
\pinlabel {The order of edges is arbitrary.} [l] at 75 45
\endlabellist
\centering
\includegraphics[scale=0.8]{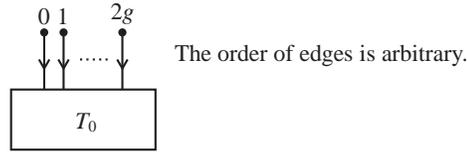}
\caption{Universal chart $\Gamma_0$}
\label{universal}
\end{figure}

\begin{rem} 
A universal Lefschetz fibration does exist for every $g$ greater than two. 
For example, Lefschetz fibrations $f_g^0, \, f_g^A, \, f_g^B, \, f_g^C, \, f_g^D$ constructed 
by Auroux \cite{Auroux2005} are universal except $f_g^D$ for $g=3$. 
There would be many universal Lefschetz fibrations of genus $g$ for a fixed $g$. 
\end{rem}

We now state the first of our main theorems. 
Let $B$ be a connected closed oriented surface and $f_0\co M_0\rightarrow S^2$ 
a universal Lefschetz fibration of genus $g$. 

\begin{thm}\label{main1}
Let $f\co M\rightarrow B$ and $f'\co M'\rightarrow B$ be Lefschetz fibrations of genus $g$. 
There exists a non-negative integer $N$ such that $f\# Nf_0$ is isomorphic to 
$f'\# Nf_0$ if and only if the following conditions hold: 
(i) $n_0^{\pm}(f)=n_0^{\pm}(f')$; 
(ii) $n_h^{\pm}(f)=n_h^{\pm}(f')$ for every $h=1,\ldots ,[g/2]$; 
(iii) $\sigma(M)=\sigma(M')$. 
\end{thm}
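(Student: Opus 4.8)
The plan is to reduce the isomorphism problem for fiber sums to a statement about charts and chart moves via Theorem \ref{classification}, and then to perform an explicit ``stabilization and cancellation'' argument at the level of charts. The necessity of conditions (i), (ii), and (iii) is straightforward: $n_0^{\pm}$ and $n_h^{\pm}$ count singular fibers of each type and are additive under fiber sum, while $\sigma(M)$ is additive by Novikov's additivity; since $f\# Nf_0$ and $f'\# Nf_0$ add the same contributions from the $N$ copies of $f_0$, if they are isomorphic then $f$ and $f'$ must already agree in all three invariants. (One uses here that $f_0$ is irreducible and chiral, so it does not affect the $n_h^{\pm}$, but this is not even needed for necessity.)

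For sufficiency, choose charts $\Gamma$, $\Gamma'$ in $B$ corresponding to $f$, $f'$ and a chart $\Gamma_0$ in $S^2$ corresponding to $f_0$, so that $f\# Nf_0$ is described by $\Gamma \oplus N\Gamma_0$ (after an ambient isotopy we may take the $N$ copies of $\Gamma_0$ in small disjoint disks, joined by trivial tubes). First I would use the $2g+1$ black vertices of type $\ell_0(i)$ in each copy of $\Gamma_0$ to ``free'' and maneuver parts of the other charts: because $\Gamma_0$ is a universal chart, its black vertices realize right-handed Dehn twists $\zeta_0,\ldots,\zeta_{2g}$ along curves in the prescribed configuration, and one can use the relations in $\mathcal{R}$ — encoded as white vertices available via chart moves of type W (birth of a pair of white vertices) and chart moves of transition — to slide, conjugate, and recombine edges. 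The key technical device, exactly as in Auroux \cite{Auroux2005} and its chart-theoretic incarnation in Kamada \cite{Kamada2012} and Endo--Kamada \cite{EK2013,EK2014}, is that once enough copies of the universal building block are present, any black vertex of $\Gamma$ (a Dehn twist along an arbitrary simple closed curve) can be pushed into a standard form, and any pair of black vertices carrying inverse generators can be created or annihilated freely. Concretely, the strategy is: (a) make $N$ large enough that both $\Gamma\oplus N\Gamma_0$ and $\Gamma'\oplus N\Gamma_0$ can be transformed by chart moves into a common ``normal form'' chart $\Gamma_{\mathrm{nf}}$ determined only by $n_0^{\pm}$, the $n_h^{\pm}$, $\sigma$, the genus $g$, and the topology of $B$; (b) invoke Theorem \ref{classification} to conclude strict isomorphism, hence a fortiori isomorphism.

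The heart of step (a) is the normal form. Using the $2g+1$ black vertices of each copy of $\Gamma_0$ together with type-W moves and transitions, every black vertex of type $\ell_0(i)^{+}$ can be moved next to a standard $\zeta_i$-edge coming from a copy of $\Gamma_0$; black vertices of type $\ell_0(i)^{-}$ get paired with a $\ell_0(i)^{+}$ from the universal part (consuming one ``unit'' of the $N$ copies per negative vertex) and, since $n_0^{-}(f)=n_0^{-}(f')$, this consumes the same number on both sides; the surviving count of type-${\rm I}^{+}$ vertices is $n_0^{+}(f)+N(2g+1)-\text{(paired)}$, identical for $f$ and $f'$ by (i). Black vertices of type $\ell_h^{\pm 1}$ are left untouched and are equal in number and sign on both sides by (ii). What remains is to match the white-vertex data modulo chart moves: the only invariant of the white part that survives chart moves of type W and transitions is the signature $\sigma(\Gamma)$ (by the Remark following Theorem \ref{signature}), and this is pinned down by $\sigma(M)=\sigma(M')$ via Theorem \ref{signature}. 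So after absorbing everything into the universal skeleton, the two charts differ only by a configuration of white vertices realizing the same element of the relevant relation module with the same signature, which can be standardized by further type-W moves once $N$ is large. The main obstacle I expect is precisely this last standardization: showing that having ``enough'' copies of the universal fibration lets one realize arbitrary conjugations and recombinations of the Dehn-twist edges by chart moves — i.e., proving a Hurwitz-equivalence stabilization lemma in the chart language — and bounding $N$ uniformly in terms of the invariants and the complexity of $\Gamma$ and $\Gamma'$. This is the chart-theoretic analogue of the core combinatorial lemma in Auroux's proof, and it is where the universality hypotheses (irreducibility, chirality, the precise intersection pattern of $a_0,\ldots,a_{2g}$) are genuinely used.
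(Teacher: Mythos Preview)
Your overall architecture is right --- reduce to charts, put both $\Gamma\oplus N\Gamma_0$ and $\Gamma'\oplus N\Gamma_0$ into a normal form, then invoke Theorem~\ref{classification} --- and the necessity argument is fine. But there is a genuine gap in the sufficiency direction.

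The problem is your treatment of the white vertices. You assert that ``the only invariant of the white part that survives chart moves of type W and transitions is the signature $\sigma(\Gamma)$,'' citing the Remark after Theorem~\ref{signature}. That remark says only that $\sigma(\Gamma)$ \emph{is} an invariant, not that it is the \emph{only} one. In fact $n_C(\Gamma)$ and $n_L(\Gamma)$ are each invariant under chart moves of type~W (a birth/death of a pair of white vertices creates $r$ and $r^{-1}$ together, so the signed counts are unchanged) and under the transitions actually used. So after normalization you will have, on each side, a specific number of $r_C$--type pieces and a specific number of $r_L$--type pieces, and condition~(iii) alone only pins down the combination $-6n_C-n_L$. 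Your normal forms could differ unless you prove $n_C(\Gamma)=n_C(\Gamma')$ and $n_L(\Gamma)=n_L(\Gamma')$ \emph{separately}. The paper supplies the missing equation: since the total signed edge-incidence at all vertices of a chart vanishes, one gets
\[
10\,n_C(\Gamma)+n_L(\Gamma)-\sum_{i}n_0(i)(\Gamma)-4\sum_{h}h(2h+1)\,n_h(\Gamma)=0,
\]
and the same for $\Gamma'$. Conditions (i) and (ii) equate the last two sums, so $10n_C+n_L$ agrees; combining with $-6n_C-n_L$ (from (iii) and Theorem~\ref{signature}) gives $n_C(\Gamma)=n_C(\Gamma')$ and $n_L(\Gamma)=n_L(\Gamma')$. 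Without this linear-algebra step your argument does not close.

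A secondary point: your normalization mechanism (pair each $\ell_0(i)^{-}$ with an $\ell_0(i)^{+}$ from the universal part, push black vertices to standard positions) is vaguer than what the paper actually does, and the ``Hurwitz-equivalence stabilization lemma'' you flag as the main obstacle is not how the argument runs. The paper takes $N$ larger than the number of edges of $\Gamma$ (and of $\Gamma'$), moves one copy of $\Gamma_0$ adjacent to \emph{each} edge of $\Gamma$ (using that $\Gamma_0$ can pass through any edge, Figure~\ref{pass}), and uses it to cut that edge. After this, transitions (Figure~\ref{transition}) kill all $r_F,r_B,r_H$ vertices, leaving a disjoint union of standard pieces $L_0(i),\tilde{L}_h,L_h^*,\tilde{R}_C,\hat{R}_C,\tilde{R}_L,\hat{R}_L,\Gamma_0$; pairs $(\tilde{R}_C,\hat{R}_C)$ and $(\tilde{R}_L,\hat{R}_L)$ cancel, $L_0(i)$'s are converted to $L_0(1)$'s (Figure~\ref{label}), and the remaining counts are then matched using (i), (ii), and the $n_C,n_L$ equalities above. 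This is more concrete than an abstract stabilization lemma and is where universality (the full set of labels $0,\ldots,2g$ with the prescribed intersection pattern) is actually used.
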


\begin{rem} 
Auroux \cite{Auroux2005} proved the `if' part of Theorem \ref{main1} 
for chiral Lefschetz fibrations over $S^2$ under the assumption 
that $f$ and $f'$ have sections with the same self-intersection number. 
Hasegawa \cite{Hasegawa2006} gave another proof of Auroux's theorem 
by using chart description. 
Moreover he removed the assumption about existence 
and self-intersection number of sections in Auroux's theorem. 
\end{rem}

\begin{rem} 
The isomorphism class of a fiber sum $f\#_{\Psi}\, f_0$ of a Lefschetz fibration $f$ 
with a universal Lefschetz fibration $f_0$ does not depend on a choice of 
an orientation preserving diffeomorphism $\Psi$ 
(see Proof of Theorem \ref{main1}). 
\end{rem}

\begin{proof}[Proof of Theorem \ref{main1}] 
We first prove the `if' part. Assume that $f$ and $f'$ satisfy the conditions (i), (ii), and (iii). 
Let $\Gamma$ and $\Gamma'$ be charts in $B$ corresponding to $f$ and $f'$, 
respectively. 
We suppose that 
$f_0$ is described by a chart $\Gamma_0$ depicted in Figure \ref{universal}. 
Since every edge has two adjacent vertices, 
the sum of the signed numbers of adjacent edges for all vertices of $\Gamma$ 
is equal to zero: 
\[
10n_C(\Gamma)+n_L(\Gamma)-\sum_{i=0}^{2g}n_0(i)(\Gamma)
-4\sum_{h=1}^{[g/2]}h(2h+1)\cdot n_h(\Gamma)=0. 
\]
A similar equality for $\Gamma'$ also holds. 
%\[
%10n_3(\Gamma')+n_4(\Gamma')-\sum_{i=0}^{2g}m_1(i)(\Gamma')
%-4\sum_{h=1}^{[g/2]}h(2h+1)\cdot m_2(h)(\Gamma')=0. 
%\]
Interpreting the conditions (i) and (ii) as conditions on $\Gamma$ and $\Gamma'$, 
we have $\sum_{i=0}^{2g}n_0(i)(\Gamma)=\sum_{i=0}^{2g}n_0(i)(\Gamma')$ 
and $n_h(\Gamma)=n_h(\Gamma')$ for $h=1,\ldots ,[g/2]$. 
Thus we obtain 
\[
10n_C(\Gamma)+n_L(\Gamma)=10n_C(\Gamma')+n_L(\Gamma'). 
\]
On the other hand, we have 
\[
-6\, n_C(\Gamma)-n_L(\Gamma)=-6\, n_C(\Gamma')-n_L(\Gamma')
\]
by the condition (iii), Theorem \ref{signature}, and 
$n_h(\Gamma)=n_h(\Gamma')$ for $h=1,\ldots ,[g/2]$. 
Hence $n_C(\Gamma)=n_C(\Gamma')$ and $n_L(\Gamma)=n_L(\Gamma')$. 

Let $N$ be an integer larger than both of the number of edges of $\Gamma$ 
and that of $\Gamma'$. 
Choose a base point $b_0\in B-(\Gamma\cup\Gamma')$. 
The fiber sum $f\# Nf_0$ is described by a chart 
$(\cdots ((\Gamma\#_{w_1}\Gamma_0)\#_{w_2}\Gamma_0)\cdots )\#_{w_N}\Gamma_0$ 
for some words $w_1,\ldots ,w_N$ in $\mathcal{X}\cup\mathcal{X}^{-1}$. 
Since hoops surrounding $\Gamma_0$ can be removed by use of the edges of $\Gamma_0$ 
as in Figure \ref{removeA}, 
the chart is transformed into a product $\Gamma\oplus N\Gamma_0$ 
by channel changes. Similarly, the fiber sum $f'\# Nf_0$ is described by a product 
$\Gamma'\oplus N\Gamma_0$. 

\begin{figure}[ht!]
\labellist
\footnotesize \hair 2pt
\pinlabel $i$ [b] at 41 71
\pinlabel $0$ [b] at 23 51
\pinlabel $i$ [b] at 43 52
\pinlabel $2g$ [b] at 61 51
\pinlabel $T_0$ at 43 11
\pinlabel $i$ [b] at 188 72
\pinlabel $0$ [b] at 142 51
\pinlabel $2g$ [b] at 179 51
\pinlabel $T_0$ at 161 11
\pinlabel $0$ [b] at 257 51
\pinlabel $i$ [b] at 275 52
\pinlabel $2g$ [b] at 293 51
\pinlabel $T_0$ at 276 11
\endlabellist
\centering
\includegraphics[scale=0.8]{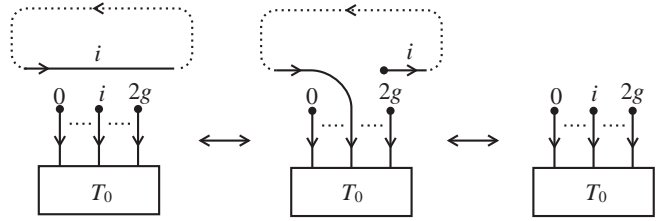}
\caption{Removing a hoop}
\label{removeA}
\end{figure}

We choose and fix $2g+1$ edges of $\Gamma_0$ which are 
labeled with $0,1,\ldots ,2g$ and adjacent to black vertices. 
We apply chart moves only to these edges in the following. 
Since $\Gamma_0$ can pass through any edge of $\Gamma$ as shown in Figure \ref{pass}, 
we can move $\Gamma_0$ to any region of $B-\Gamma$ by channel changes. 
\begin{figure}[ht!]
\labellist
\footnotesize \hair 2pt
\pinlabel $i$ [b] at 35 69
\pinlabel $0$ [b] at 18 51
\pinlabel $i$ [b] at 36 52
\pinlabel $2g$ [b] at 55 51
\pinlabel $T_0$ at 35 11
\pinlabel {(a)} [t] at 37 0
\pinlabel $i$ [b] at 106 72
\pinlabel $i$ [b] at 160 72
\pinlabel $0$ [b] at 115 52
\pinlabel $2g$ [b] at 151 52
\pinlabel $T_0$ at 134 11
\pinlabel {(b)} [t] at 134 0
\pinlabel $i$ [t] at 200 66
\pinlabel $i$ [t] at 255 66
\pinlabel $2g$ [t] at 210 86
\pinlabel $0$ [t] at 246 86
\pinlabel $T_0$ at 228 126
\pinlabel {(c)} [t] at 228 0
\pinlabel $i$ [t] at 322 66
\pinlabel $2g$ [t] at 303 86
\pinlabel $i$ [t] at 321 85
\pinlabel $0$ [t] at 341 86
\pinlabel $T_0$ at 322 126
\pinlabel {(d)} [t] at 321 0
%{\rotatebox{180}{$M$}}
\endlabellist
\centering
\includegraphics[scale=0.8]{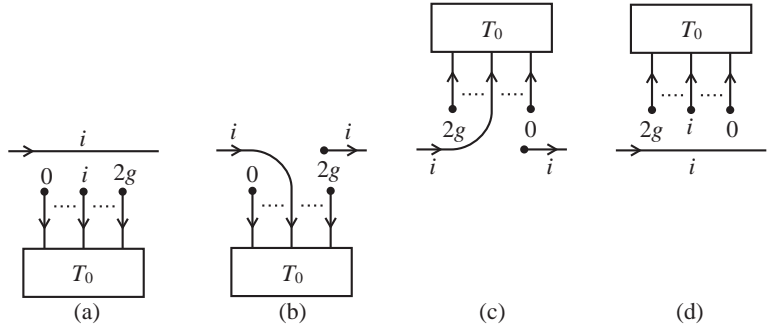}
\caption{Passing through an edge}
\label{pass}
\end{figure}
For each edge of $\Gamma$, we move a copy of $\Gamma_0$ to a region adjacent to 
the edge and apply a channel change to the edge and $\Gamma_0$ 
as in Figure \ref{pass} (a) and (b). 
Applying chart moves of transition to each component of the chart 
as in Figure \ref{transition}, 
we remove white vertices of type $r_F(i,j)^{\pm 1}, r_B(i)^{\pm 1},r_H^{\pm 1}$ 
to obtain a union of copies of 
$L_0(i), \tilde{L}_h, L_h^*, \tilde{R}_C, \hat{R}_C, 
\tilde{R}_L, \hat{R}_L, \Gamma_0$ shown in 
Figures \ref{chartH}, \ref{chartC}, \ref{chartD}, 
where we use a simplification of diagrams as in Figure \ref{symbol}. 

\begin{figure}[ht!]
\labellist
\footnotesize \hair 2pt
\pinlabel $1$ [b] at 113 50
\pinlabel $2h$ [b] at 92 50
\pinlabel $1$ [b] at 82 50
\pinlabel $2h$ [b] at 64 50
\pinlabel $1$ [b] at 30 50
\pinlabel $2h$ [b] at 8 50
\endlabellist
\centering
\includegraphics[scale=0.8]{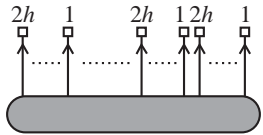}
\caption{Chart $\tilde{L}_h$}
\label{chartH}
\end{figure}

\begin{figure}[ht!]
\labellist
\footnotesize \hair 2pt
\pinlabel $1$ [b] at 124 78
\pinlabel $2$ [b] at 115 78
\pinlabel $3$ [b] at 107 78
\pinlabel $1$ [b] at 98 78
\pinlabel $2$ [b] at 91 78
\pinlabel $3$ [b] at 82 78
\pinlabel $1$ [b] at 73 78
\pinlabel $2$ [b] at 64 78
\pinlabel $3$ [b] at 56 78
\pinlabel $1$ [b] at 48 78
\pinlabel $2$ [b] at 39 78
\pinlabel $3$ [b] at 31 78
\pinlabel $0$ [t] at 9 0
\pinlabel $4$ [t] at 18 0
\pinlabel $3$ [t] at 27 0
\pinlabel $2$ [t] at 35 0
\pinlabel $1$ [t] at 43 0
\pinlabel $1$ [t] at 52 0
\pinlabel $2$ [t] at 61 0
\pinlabel $3$ [t] at 69 0
\pinlabel $4$ [t] at 78 0
\pinlabel $0$ [t] at 86 0
\pinlabel $4$ [t] at 94 0
\pinlabel $3$ [t] at 103 0
\pinlabel $2$ [t] at 112 0
\pinlabel $1$ [t] at 120 0
\pinlabel $1$ [t] at 129 0
\pinlabel $2$ [t] at 138 0
\pinlabel $3$ [t] at 146 0
\pinlabel $4$ [t] at 154 0
\pinlabel $1$ [b] at 226 79
\pinlabel $2$ [b] at 235 79
\pinlabel $3$ [b] at 244 79
\pinlabel $1$ [b] at 252 79
\pinlabel $2$ [b] at 260 79
\pinlabel $3$ [b] at 269 79
\pinlabel $1$ [b] at 278 79
\pinlabel $2$ [b] at 286 79
\pinlabel $3$ [b] at 295 79
\pinlabel $1$ [b] at 302 79
\pinlabel $2$ [b] at 311 79
\pinlabel $3$ [b] at 320 79
\pinlabel $0$ [t] at 341 0
\pinlabel $4$ [t] at 333 0
\pinlabel $3$ [t] at 324 0
\pinlabel $2$ [t] at 316 0
\pinlabel $1$ [t] at 308 0
\pinlabel $1$ [t] at 299 0
\pinlabel $2$ [t] at 291 0
\pinlabel $3$ [t] at 281 0
\pinlabel $4$ [t] at 274 0
\pinlabel $0$ [t] at 265 0
\pinlabel $4$ [t] at 256 0
\pinlabel $3$ [t] at 247 0
\pinlabel $2$ [t] at 239 0
\pinlabel $1$ [t] at 231 0
\pinlabel $1$ [t] at 222 0
\pinlabel $2$ [t] at 214 0
\pinlabel $3$ [t] at 206 0
\pinlabel $4$ [t] at 197 0
\endlabellist
\centering
\includegraphics[scale=0.8]{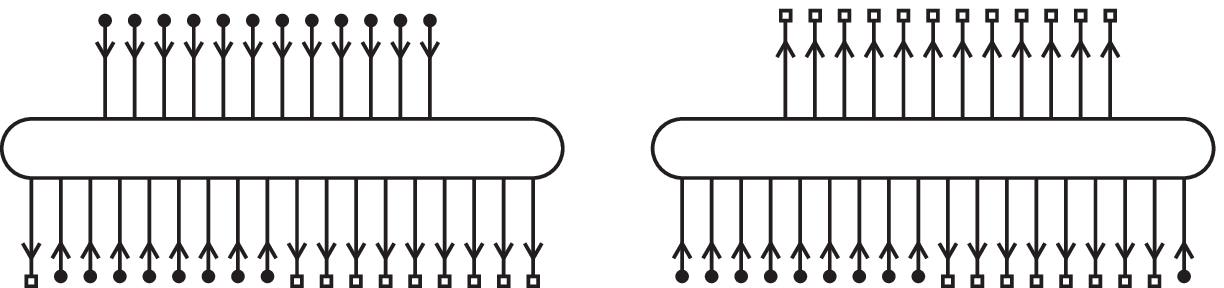}
\caption{Charts $\tilde{R}_C$ and $\hat{R}_C$}
\label{chartC}
\end{figure}

\begin{figure}[ht!]
\labellist
\footnotesize \hair 2pt
\pinlabel $\delta_3$ [b] at 29 88
\pinlabel $1$ [b] at 46 83
\pinlabel $3$ [b] at 55 83
\pinlabel $5$ [b] at 64 83
\pinlabel $\tau_1$ [b] at 80 88
\pinlabel $\tau_2$ [b] at 105 88
\pinlabel $0$ [t] at 128 3
\pinlabel $\tau_2^{-\negthinspace 1}$ [t] at 110 0
\pinlabel $\tau_1^{-\negthinspace 1}$ [t] at 85 0
\pinlabel $\tau_2$ [t] at 60 0
\pinlabel $0$ [t] at 43 3
\pinlabel $\tau_2^{-\negthinspace 1}$ [t] at 26 0
\pinlabel $0$ [t] at 9 3
\pinlabel $0$ [t] at 296 3
\pinlabel $\tau_2$ [t] at 279 0
\pinlabel $0$ [t] at 262 3
\pinlabel $\tau_2^{-\negthinspace 1}$ [t] at 245 0
\pinlabel $\tau_1$ [t] at 220 0
\pinlabel $\tau_2$ [t] at 195 0
\pinlabel $0$ [t] at 177 3
\pinlabel $\tau_2^{-\negthinspace 1}$ [b] at 197 88
\pinlabel $\tau_1^{-\negthinspace 1}$ [b] at 223 88
\pinlabel $5$ [b] at 240 85
\pinlabel $3$ [b] at 249 85
\pinlabel $1$ [b] at 258 85
\pinlabel $\delta_3^{-\negthinspace 1}$ [b] at 273 88
\endlabellist
\centering
\includegraphics[scale=0.8]{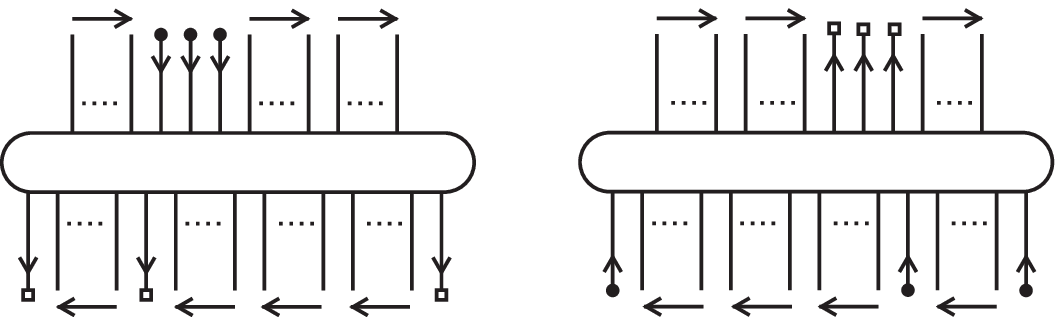}
\caption{Charts $\tilde{R}_L$ and $\hat{R}_L$}
\label{chartD}
\end{figure}

\begin{figure}[ht!]
\labellist
\footnotesize \hair 2pt
\pinlabel $i$ [b] at 3 45
\pinlabel $0$ [b] at 89 57
\pinlabel $i$ [b] at 107 71
\pinlabel $2g$ [b] at 126 57
\pinlabel $T_0$ at 108 14
\pinlabel $=$ at 40 28
\endlabellist
\centering
\includegraphics[scale=0.8]{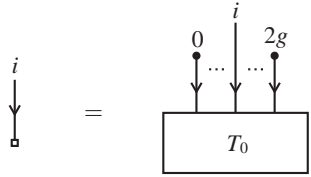}
\caption{Simplification of diagram}
\label{symbol}
\end{figure}

If there is a pair of $\tilde{R}_C$ and $\hat{R}_C$, 
we remove them by a death of a pair of white vertices to obtain many copies of $\Gamma_0$. 
Similarly, we remove a pair of $\tilde{R}_L$ and $\hat{R}_L$. 
Since there is at least one $\Gamma_0$, any copy of $L_0(i)$ can be transformed into 
$L_0(1)$ as in Figure \ref{label}. 

\begin{figure}[ht!]
\labellist
\footnotesize \hair 2pt
\pinlabel $j$ [l] at 28 152
\pinlabel $0$ [b] at 83 186
\pinlabel $i$ [b] at 84 201
\pinlabel $2g$ [b] at 120 186
\pinlabel $T_0$ at 101 143
\pinlabel $j$ [l] at 199 152
\pinlabel $i$ [l] at 212 164
\pinlabel $0$ [b] at 255 186
\pinlabel $j$ [b] at 255 201
\pinlabel $2g$ [b] at 292 186
\pinlabel $T_0$ at 274 143
\pinlabel $j$ [l] at 370 152
\pinlabel $i$ [l] at 382 150
\pinlabel $j$ [l] at 390 142
\pinlabel $0$ [b] at 426 186
\pinlabel $j$ [b] at 444 186
\pinlabel $2g$ [b] at 463 186
\pinlabel $T_0$ at 444 143
\pinlabel $i$ [b] at 26 91
\pinlabel $j$ [t] at 26 46
\pinlabel $i$ [b] at 26 23
\pinlabel $j$ [t] at 26 13
\pinlabel $0$ [b] at 82 73
\pinlabel $j$ [b] at 100 73
\pinlabel $2g$ [b] at 119 73
\pinlabel $T_0$ at 100 29
\pinlabel $i$ [l] at 202 82
\pinlabel $j$ [tl] at 216 8
\pinlabel $i$ [b] at 201 10
\pinlabel $0$ [b] at 254 73
\pinlabel $j$ [b] at 273 73
\pinlabel $2g$ [b] at 291 73
\pinlabel $T_0$ at 273 29
\pinlabel $i$ [l] at 372 44
\pinlabel $0$ [b] at 425 73
\pinlabel $j$ [b] at 444 73
\pinlabel $2g$ [b] at 463 73
\pinlabel $T_0$ at 443 29
\endlabellist
\centering
\includegraphics[scale=0.75]{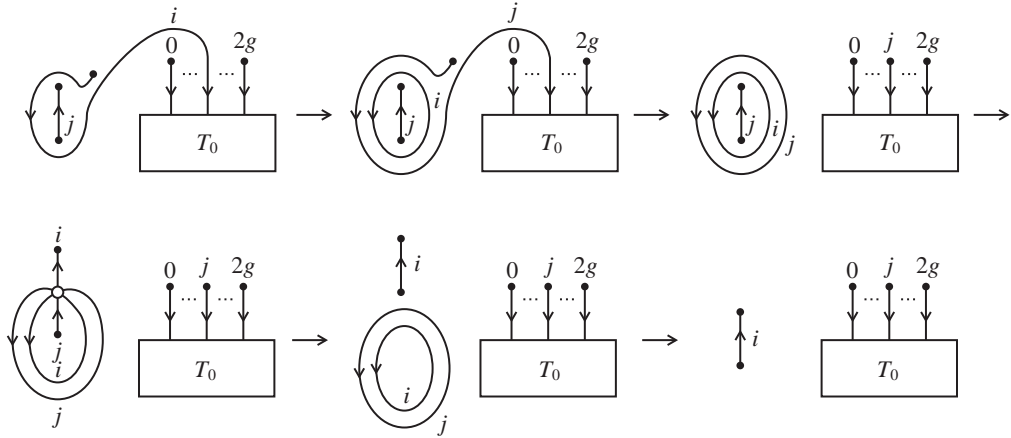}
\caption{Changing a label ($j=i+1$ or $(i,j)=(4,0)$)}
\label{label}
\end{figure}

Thus we have a union $\Gamma_1$ of 
$n_0^-(\Gamma)$ copies of $L_0(1)$, 
$n_h^+(\Gamma)$ copies of $\tilde{L}_h$, 
$n_h^-(\Gamma)$ copies of $L_h^*$, 
$|n_C(\Gamma)|$ copies of $\tilde{R}_C$ (or $\hat{R}_C$), 
$|n_L(\Gamma)|$ copies of $\tilde{R}_L$ (or $\hat{R}_L$), 
and $k$ copies of $\Gamma_0$ for some $k$. 
A similar argument implies that $\Gamma'\oplus N\Gamma_0$ is transformed into 
a union $\Gamma'_1$ of 
$n_0^-(\Gamma')$ copies of $L_0(1)$, 
$n_h^+(\Gamma')$ copies of $\tilde{L}_h$, 
$n_h^-(\Gamma')$ copies of $L_h^*$, 
$|n_C(\Gamma')|$ copies of $\tilde{R}_C$ (or $\hat{R}_C$), 
$|n_L(\Gamma')|$ copies of $\tilde{R}_L$ (or $\hat{R}_L$), 
and $k'$ copies of $\Gamma_0$ for some $k'$ 
by chart moves of type W and chart moves of transition. 
By virtue of the conditions (i) and (ii) together with $n_C(\Gamma)=n_C(\Gamma')$, 
$n_L(\Gamma)=n_L(\Gamma')$, $n_0^+(\Gamma\oplus N\Gamma_0)=n_0^+(\Gamma_1)$, 
and $n_0^+(\Gamma'\oplus N\Gamma_0)=n_0^+(\Gamma'_1)$, 
we conclude that $k=k'$ because of $n_0^+(\Gamma_0)\ne 0$. 
Hence $\Gamma_1$ is transformed into $\Gamma'_1$ by an ambient isotopy of $B$ 
relative to $b_0$, 
which means that $\Gamma\oplus N\Gamma_0$ is transformed into 
$\Gamma'\oplus N\Gamma_0$ by chart moves of type W, chart moves of transition, 
and ambient isotopies of $B$ relative to $b_0$. 
Therefore $f\# Nf_0$ is (strictly) isomorphic to $f'\# Nf_0$ by Theorem \ref{classification}. 

We next prove the `only if' part. 
Take a non-negative integer $N$ so that $f\# Nf_0$ is isomorphic to $f'\# Nf_0$. 
Since an isomorphism preserves numbers and types of vanishing cycles and signatures, 
we have $n_0^{\pm}(f\# Nf_0)=n_0^{\pm}(f'\# Nf_0)$, 
$n_h^{\pm}(f\# Nf_0)=n_h^{\pm}(f'\# Nf_0)$ for every $h=1,\ldots ,[g/2]$, 
and $\sigma(M\#_F NM_0)=\sigma(M'\#_F NM_0)$. 
The conditions (i), (ii), (iii) follows from additivity of $n_0^{\pm}, n_h^{\pm}, \sigma$ 
under fiber sum. 
\end{proof}

\begin{defn} 
A Lefschetz fibration of genus $g$ over $S^2$ is called {\it elementary} if 
it contains exactly two singular fibers of type ${\rm I}^+$ and of type ${\rm I}^-$ 
which have the same vanishing cycles. 
A chart $L_0(i)$ in $S^2$ corresponds to an elementary Lefschetz fibration. 
\end{defn}

\begin{rem} 
Two elementary Lefschetz fibrations of genus $g$ are isomorphic to each other. 
The total space of an elementary Lefschetz fibration of genus $g$ is diffeomorphic to 
$\Sigma_{g-1}\times S^2\# S^1\times S^3$. 
\end{rem}

%\begin{lem}
%A chart $L_1(i)$ in $S^2$ for $i\in\{0,1,\ldots ,2g\}$ is transformed into $L_1(1)$ 
%by chart moves of type W and chart moves of conjugacy type. 
%\end{lem}

We state the second of our main theorems. 
Let $B$ be a connected closed oriented surface and $f_{\star}\co M_{\star}\rightarrow S^2$ 
an elementary Lefschetz fibration of genus $g$. 

\begin{thm}\label{main2}
Let $f\co M\rightarrow B$ and $f'\co M'\rightarrow B$ be Lefschetz fibrations of genus $g$. 
There exists a non-negative integer $N$ such that a fiber sum $f\# Nf_{\star}$ is isomorphic to 
a fiber sum $f'\# Nf_{\star}$ if and only if the following conditions hold: 
(i) $n_0^{\pm}(f)=n_0^{\pm}(f')$; 
(ii) $n_h^{\pm}(f)=n_h^{\pm}(f')$ for every $h=1,\ldots ,[g/2]$; 
(iii) $\sigma(M)=\sigma(M')$. 
\end{thm}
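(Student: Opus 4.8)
The plan is to prove Theorem~\ref{main2} by running the proof of Theorem~\ref{main1} with the universal chart $\Gamma_0$ replaced by a large number of copies of the free edge $L_0(i)$, which is exactly a chart corresponding to the elementary Lefschetz fibration $f_{\star}$. The \emph{only if} direction is then word for word that of Theorem~\ref{main1}: an isomorphism between $f\# Nf_{\star}$ and $f'\# Nf_{\star}$ preserves $n_0^{\pm}$, $n_h^{\pm}$ and the signature of the total space, and since $n_0^{\pm}(f_{\star})=1$, $n_h^{\pm}(f_{\star})=0$ and $\sigma(M_{\star})=0$, additivity of these quantities under fiber sum yields (i), (ii) and (iii). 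So the work is entirely in the \emph{if} direction.

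For the \emph{if} direction I would let $\Gamma$ and $\Gamma'$ be charts in $B$ corresponding to $f$ and $f'$. The opening step of the proof of Theorem~\ref{main1} --- the identity relating the vertex counts, together with Theorem~\ref{signature} and hypotheses (i)--(iii) --- applies verbatim and gives $n_C(\Gamma)=n_C(\Gamma')$ and $n_L(\Gamma)=n_L(\Gamma')$. After removing the hoops created by the fiber sum, $f\# Nf_{\star}$ is described by the product $\Gamma\oplus NL_0(i)$ and $f'\# Nf_{\star}$ by $\Gamma'\oplus NL_0(i)$, where $N$ is chosen much larger than the numbers of edges of $\Gamma$ and $\Gamma'$. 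In the proof of Theorem~\ref{main1} the chart $\Gamma_0$ is used \emph{only} as a reservoir of free edges of every label $0,1,\dots,2g$: to cut each edge of $\Gamma$ by a free edge of the matching label, to relabel residual $L_0$ pieces as in Figure~\ref{label}, and to assemble the pieces $\tilde R_C,\hat R_C,\tilde R_L,\hat R_L$. Hence the one new ingredient required is that a free edge of label $i$ can be transformed, by chart moves of type W and chart moves of transition, into a free edge of any label $j\in\{0,1,\dots,2g\}$, at the cost of finitely many additional free edges.

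To establish this I would use the curve configuration of Figure~\ref{curves}: the curves $c_0,c_1,\dots,c_{2g}$ are connected through the adjacencies of consecutive $c_k,c_{k+1}$ and of $c_0,c_4$, and for adjacent indices the braid relation gives $\zeta_{k+1}=(\zeta_k\zeta_{k+1})\zeta_k(\zeta_k\zeta_{k+1})^{-1}$; so any $\zeta_i$ is carried to any $\zeta_j$ by a conjugation whose derivation from Wajnryb's relations uses only relators of type $r_F$ and $r_B$. Applying the corresponding chart moves of transition to one black vertex of $L_0(i)$ turns it into a free edge of label $j$ while introducing white vertices of type $r_F(\cdot,\cdot)^{\pm1}$ and $r_B(\cdot)^{\pm1}$ only, and these are then removed exactly by the chart moves of transition used in the proof of Theorem~\ref{signature} (Figure~\ref{transition}), which convert an $r_F$- or $r_B$-relator chart into a union of free edges and leave $n_C$, $n_L$ and the $n_h$'s unchanged. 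Granting this, $N$ copies of $L_0(i)$ supply an arbitrarily large stock of free edges of every label, and the reduction of $\Gamma\oplus NL_0(i)$ proceeds as in the proof of Theorem~\ref{main1}: cut every edge of $\Gamma$, split each vertex of $\Gamma$ off as a standard piece by chart moves of transition, reduce to a disjoint union of copies of $L_0(1)$, $L_h$, $L_h^*$, $R_C$ or $R_C^*$, $R_L$ or $R_L^*$ and some free edges, cancel the $R_C$--$R_C^*$ and $R_L$--$R_L^*$ pairs down to $|n_C(\Gamma)|$ and $|n_L(\Gamma)|$ copies respectively, and gather the surviving free edges into $m$ copies of $L_0(0)$. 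By $n_C(\Gamma)=n_C(\Gamma')$, $n_L(\Gamma)=n_L(\Gamma')$ and hypotheses (i)--(ii), the two reduced charts agree except possibly in $m$; and $m$ is pinned down on each side by the invariance of the total number of positive-type black vertices under chart moves --- here each $L_0(i)$ contributing $n_0^+(L_0(i))=1\neq0$ --- together with $n_0^+(f)=n_0^+(f')$. Hence $\Gamma\oplus NL_0(i)$ is transformed into $\Gamma'\oplus NL_0(i)$ by chart moves of type W, chart moves of transition, and ambient isotopies of $B$ relative to $b_0$, so $f\# Nf_{\star}$ is (strictly) isomorphic to $f'\# Nf_{\star}$ by Theorem~\ref{classification}.

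The step I expect to be the main obstacle is the careful execution of the relabeling lemma and the bookkeeping around it: one must arrange the relabeling moves inside a disk disjoint from the rest of the chart and from $b_0$, verify that no $r_C$, $r_L$ or $r_H$ white vertex is ever forced (so that the signature count stays untouched), and confirm that the number $m$ of surviving free edges is genuinely a function of the invariants in (i)--(iii) and of $N$ alone, hence the same for $\Gamma$ and for $\Gamma'$ --- which in turn forces $N$ to be taken large enough that $m\ge 0$. Once these points are settled, Theorem~\ref{main2} follows along the lines of Theorem~\ref{main1}.
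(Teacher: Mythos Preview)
Your overall plan is sound and close in spirit to the paper's, but the paper executes the reduction differently and thereby avoids your relabeling lemma altogether; and your justification of that lemma has a gap.

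The paper does \emph{not} start from $\Gamma\oplus NL_0(i)$ for a fixed label $i$. It exploits the freedom in the phrase ``\emph{a} fiber sum'' (cf.\ the Remark following Theorem~\ref{main2}): for each edge $e$ of $\Gamma$ it places the \emph{specific} conjugated free edge $\Gamma_e$ of label $i_e$ from the proof of Theorem~\ref{signature} (Figure~\ref{chartE}), together with $N-\#E(\Gamma)$ extra copies of $L_0(1)$; each such piece already describes one summand $f_\star$ for a suitable $\Psi$. The channel changes of Figures~\ref{channelC}--\ref{channelD} then cut every edge of $\Gamma$ directly, with no relabeling needed. After the transitions of Figure~\ref{transition} and deaths of pairs of white vertices one is left with copies of $L_h$, $L_h^*$, $R_C$ or $R_C^*$, $R_L$ or $R_L^*$, and $k_i$ copies of $L_0(i)$ for various $i$. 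The paper then does \emph{not} normalize these to a single label: from $n_0^+$ one only gets $\sum_i k_i=\sum_i k'_i$, so the paper simply adds $|k_i-k'_i|$ further copies of $L_0(i)$ to whichever side is short, for each $i$. Since $\sum_i(k_i-k'_i)=0$, the same total number is added to each side, so both charts still describe fiber sums $f\#N'f_\star$ and $f'\#N'f_\star$ for a common (possibly larger) $N'$, and are now identical up to ambient isotopy. This completely sidesteps the relabeling issue.

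As for your relabeling step itself: applying a chart move of transition to \emph{one} black vertex of $L_0(i)$ does not ``turn it into a free edge of label $j$''; the other black vertex is still of type $\ell_0(i)^{-1}$, so you obtain a two-vertex chart with black vertices of types $\ell_0(j)$ and $\ell_0(i)^{-1}$ joined through a region of white vertices. The Figure~\ref{transition} moves you invoke cannot then be applied as stated, since they require every edge of the white vertex to end at a black vertex. The lemma you want is nevertheless true --- $L_0(i)$ can be transformed into $L_0(j)$ by type~W moves and transitions alone, with no ``additional free edges'' --- and a clean justification is either to apply transitions to \emph{both} black vertices using the same conjugating word $w$ and then cancel the resulting white vertices in $r$/$r^{-1}$ pairs, or simply to invoke Theorem~\ref{classification} together with the fact that any two elementary Lefschetz fibrations of genus $g$ are isomorphic. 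Incidentally, no hoop removal is needed at the outset: since you may choose each $\Psi$ freely, take it to be the identity so that the chart is already a product.
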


\begin{rem}
In contrast to Theorem \ref{main1}, 
the isomorphism class of a fiber sum $f\#_{\Psi}\, f_{\star}$ of a Lefschetz fibration $f$ 
with an elementary Lefschetz fibration $f_{\star}$ depends on a choice of 
an orientation preserving diffeomorphism $\Psi$ in general. 
\end{rem}

\begin{proof}[Proof of Theorem \ref{main2}] 
We only show the `if' part. The `only if' part is the same as that of the proof of 
Theorem \ref{main1}. 

Assume that $f$ and $f'$ satisfy the conditions (i), (ii), and (iii). 
Let $\Gamma$ and $\Gamma'$ be charts in $B$ corresponding to $f$ and $f'$, 
respectively. 
It follows from the same argument as in the proof of Theorem \ref{main1} 
that $n_C(\Gamma)=n_C(\Gamma')$ and $n_L(\Gamma)=n_L(\Gamma')$. 
Let $N$ be an integer larger than both of the number of edges of $\Gamma$ 
and that of $\Gamma'$. 
We construct the chart $\Gamma_e$ in $B$ for each $e\in E(\Gamma)$ 
as in the proof of Theorem \ref{signature}. 
Taking the union of $\Gamma$ with $\Gamma_e$ for all $e\in E(\Gamma)$ 
and with $N-\#E(\Gamma)$ copies of $L_1(1)$, 
we obtain a new chart $\Gamma_1$ in $B$, 
which describes a fiber sum $f\# Nf_{\star}$. 
Applying channel changes as in the proof of Theorem \ref{signature} and 
deaths of pairs of white vertices appropriately, 
we obtain a union $\Gamma_2$ of 
$n_h^+(\Gamma)$ copies of $L_h$, 
$n_h^-(\Gamma)$ copies of $L_h^*$, 
$|n_C(\Gamma)|$ copies of $R_C$ (or $R_C^*$), 
$|n_L(\Gamma)|$ copies of $R_L$ (or $R_L^*$), 
and $k_i$ copies of $L_0(i)$ for some $k_i$, 
Similarly, $\Gamma'$ is transformed into $\Gamma'_1$, 
which describes a fiber sum $f'\# Nf_{\star}$, and then 
a union $\Gamma'_2$ of 
$n_h^+(\Gamma')$ copies of $L_h$, 
$n_h^-(\Gamma')$ copies of $L_h^*$, 
$|n_C(\Gamma')|$ copies of $R_C$ (or $R_C^*$), 
$|n_L(\Gamma')|$ copies of $R_L$ (or $R_L^*$), 
and $k'_i$ copies of $L_0(i)$ for some $k'_i$. 

A similar argument on the number $n_0^+$ as in the proof of Theorem \ref{main1} 
implies that $k_0+k_1+\cdots +k_{2g}=k'_0+k'_1+\cdots +k'_{2g}$. 
Adding $|k_i-k'_i|$ copies of $L_0(i)$ to either $\Gamma_2$ or $\Gamma'_2$ if necessary, 
we may assume that $k_i=k'_i$ for every $i\in\{0,1,\ldots ,2g\}$. 
Hence $\Gamma_2$ is transformed into $\Gamma'_2$ by an ambient isotopy of $B$ 
relative to $b_0$, 
which means that $f\# Nf_{\star}$ is (strictly) isomorphic to $f'\# Nf_{\star}$ 
by Theorem \ref{classification}. 
\end{proof} 

Let $g$ be an integer greater than two and 
$B_1,\ldots ,B_r$ connected closed oriented surfaces. 
We consider a Lefschetz fibration $f_i\co M_i\rightarrow B_i$ of genus $g$ 
for each $i\in\{1,\ldots ,r\}$, 
and a universal Lefschetz fibration $f_0\co M_0\rightarrow S^2$ of genus $g$. 
%We set $B:=B_1\#\cdots \#B_r$. 

\begin{prop}\label{sums}
For (possibly different) fiber sums $f$ and $f'$ of $f_1,\ldots ,f_r$, 
fiber sums $f\# f_0$ and $f'\# f_0$ are isomorphic to each other. 
\end{prop}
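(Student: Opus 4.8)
The plan is to deduce this from the chart calculus already used for Theorem \ref{main1}, the crucial point being that a \emph{single} copy of the universal chart $\Gamma_0$ suffices to absorb all the data distinguishing one fiber sum of $f_1,\dots,f_r$ from another. For each $i$ pick a chart $\Gamma_i$ in $B_i$ corresponding to $f_i$, and let $\Gamma_0$ be the universal chart in $S^2$ of Figure \ref{universal} describing $f_0$. Since a fiber sum $f$ of $f_1,\dots,f_r$ is obtained by iterated pairwise fiber summing along some tree pattern, the chart description of fiber sums shows that $f$ is described, in $B:=B_1\#\cdots\#B_r$, by a chart consisting of the disjoint pieces $\Gamma_1,\dots,\Gamma_r$, each lying in its own connected summand, together with hoops supported on the connecting tubes that represent the words of the gluing diffeomorphisms used. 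Consequently $f\#f_0$ is described, in $B\#S^2\cong B$, by this chart with $\Gamma_0$ adjoined in the extra $S^2$--summand together with one more family of hoops on a new tube; the analogous statement holds for $f'\#f_0$ with possibly different words and a possibly different tree of tubes.

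First I would trivialize every connecting hoop using $\Gamma_0$. Fix the $2g+1$ edges of $\Gamma_0$ labelled $0,1,\dots,2g$ that run from a black vertex into the region $\mathrm{T}_0$. Exactly as in the proof of Theorem \ref{main1}, $\Gamma_0$ can be slid past any edge of any $\Gamma_j$ (Figure \ref{pass}) and along any tube by channel changes, and a hoop labelled $\zeta_j^{\pm 1}$ encircling $\Gamma_0$ can be removed by a channel change against the edge of $\Gamma_0$ labelled $j$ (Figure \ref{removeA}), that edge remaining available afterwards. Decomposing each gluing word into single-generator hoops and bringing $\Gamma_0$ next to each of them in turn, all hoops on all tubes are removed by chart moves of type W. The chart describing $f\#f_0$ is thereby transformed into the product $\Gamma_1\oplus\cdots\oplus\Gamma_r\oplus\Gamma_0$, i.e.\ the disjoint union of $\Gamma_1,\dots,\Gamma_r,\Gamma_0$ in $B$ with all connecting words trivial.

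The product $\Gamma_1\oplus\cdots\oplus\Gamma_r\oplus\Gamma_0$ depends only on the charts $\Gamma_1,\dots,\Gamma_r,\Gamma_0$, and not on the order or bracketing of the summands or on the chosen gluing diffeomorphisms: any two such products are disjoint unions of the same pieces inside the connected oriented surface $B$, hence are carried to one another by an ambient isotopy of $B$ relative to the base point. Running the same reduction for $f'$ shows that $f'\#f_0$ is likewise described, after chart moves of type W and an ambient isotopy, by $\Gamma_1\oplus\cdots\oplus\Gamma_r\oplus\Gamma_0$. By Theorem \ref{classification}, $f\#f_0$ and $f'\#f_0$ are (strictly) isomorphic.

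The main obstacle is the bookkeeping in the first step: one must check that $\Gamma_0$, kept small and moved only by the elementary channel changes of Figures \ref{removeA} and \ref{pass}, can genuinely be routed next to every hoop on every tube of an essentially arbitrary tree of summands over surfaces of positive genus, and that the labelled edges of $\Gamma_0$ needed for the next hoop removal always survive the previous one. These are precisely the local moves already justified in the proof of Theorem \ref{main1}, so the argument here is their globalization over a more complicated—but still connected—base surface, and no new phenomenon arises once one iterates them. (One could alternatively observe that $f$ and $f'$, being fiber sums of the same $f_1,\dots,f_r$, automatically agree in the invariants (i)--(iii) of Theorem \ref{main1} and invoke that theorem to get $f\#Nf_0\cong f'\#Nf_0$ for some $N\geq 0$; the direct chart argument above has the advantage of yielding the sharp value $N=1$.)
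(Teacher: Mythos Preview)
Your proof is correct and follows essentially the same approach as the paper's: use the universal chart $\Gamma_0$ to absorb, via the channel changes of Figures~\ref{removeA} and~\ref{pass}, every hoop encoding a gluing word, thereby reducing both $f\#f_0$ and $f'\#f_0$ to the same product chart $\Gamma_1\oplus\cdots\oplus\Gamma_r\oplus\Gamma_0$. The paper's own proof is a two-sentence sketch of exactly this idea; you have filled in the bookkeeping (routing $\Gamma_0$ along the tree of tubes, checking that the needed edges of $\Gamma_0$ persist) that the paper leaves implicit.
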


\begin{proof}
Let $\Gamma$ and $\Gamma'$ be charts corresponding to $f$ and $f'$. 
Since hoops surrounding a component of $\Gamma$ (and $\Gamma'$)
can be removed by use of the edges of $\Gamma_0$ 
as in Figure \ref{removeA}, $\Gamma\#\Gamma_0$ and $\Gamma'\#\Gamma_0$ 
are transformed into the same chart. 
\end{proof}

\begin{rem} 
Proposition \ref{sums} implies that there are many examples of non-isomorphic 
Lefschetz fibrations with the same base, the same fiber, and the same numbers of singular 
fibers of each type which become isomorphic after one stabilization. 
For example, the Lefschetz fibration on $E(n)_K$ constructed by Fintushel and Stern 
\cite[Theorem 14]{FS2004} (see also Park and Yun \cite{PY2009}) 
for a fibered knot $K$ becomes isomorphic to that on $E(n)_{K'}$ 
for another fibered knot $K'$ of the same genus after one stabilization. 
Similar results hold for Lefschetz fibrations on $Y(n;K_1,K_2)$ constructed by 
Fintushel and Stern \cite[\S 7]{FS2004} (see also Park and Yun \cite{PY2011})
as well as fiber sums of (generalizations of) Matsumoto's fibration 
studied by Ozbagci and Stipsicz \cite{OS2000}, 
Korkmaz \cite{Korkmaz2001, Korkmaz2009}, 
and Okamori \cite{Okamori2011}. 
\end{rem}

%%%%%%%%%%%  Section 5  %%%%%%%%%%%

\section{Variations and problems}

%%%%%%%%%%%%%%%%%%%%%%%%%%%%

In this section we discuss possible variations of chart description for Lefschetz fibrations. 

If we replace the triple $(\mathcal{X},\mathcal{R},\mathcal{S})$ 
defined in Section 2 with other triples, 
we obtain various chart descriptions for Lefschetz fibrations 
(see Kamada \cite{Kamada2007} and Hasegawa \cite{Hasegawa2006}). 

We first choose large $\mathcal{X},\mathcal{R}$, and $\mathcal{S}$. 
Let $\mathcal{X}$ be the set of right-handed Dehn twists along simple closed curves 
in $\Sigma_g$ 
and $\mathcal{S}$ the set of Dehn twists along non-trivial simple closed curves in $\Sigma_g$. 
By virtue of a theorem of Luo \cite{Luo1997}, 
$\langle \mathcal{X}\, |\,\mathcal{R}\rangle$ gives an infinite presentation of $\mathcal{M}_g$ 
for the set $\mathcal{R}$ of the following four kinds of words: 
(0) trivial relator $r_T:=a$, where $a$ is the Dehn twist along a trivial simple closed curve 
on $\Sigma_g$; 
(1) primitive braid relator $r_P:=b^{-1}abc^{-1}$, 
where $a,b,c\in\mathcal{X}$ and 
the curve for $c$ is the image of the curve for $a$ by $b$; 
(2) $2$--chain relator $r_C:=(c_2c_1)^6d^{-1}$, where 
$c_1,c_2,d\in \mathcal{X}$ and 
the curves for $c_1$ and $c_2$ intersect transversely at one point and 
the curve for $d$ is the boundary 
curve of a regular neighborhood of the union of the curves for $c_1$ and $c_2$; 
(3) lantern relator $r_L:=cbad_4^{-1}d_3^{-1}d_2^{-1}d_1^{-1}$, 
where $a,b,c,d_1,d_2,d_3,d_4\in\mathcal{X}$ and 
the curves for $a$ and $b$ intersect transversely at two points with algebraic 
intersection number zero, the curve for 
$c$ is obtained by resolving the intersections of these two curves, 
and the curves for $d_1,d_2,d_3,d_4$ are the boundary curves of a regular neighborhood 
of those for $a,b,c$. 

Let $B$ be a connected closed oriented surface. 
Charts in $B$ for the triple $(\mathcal{X},\mathcal{R},\mathcal{S})$ defined above have 
white vertices of type 
$r_T^{\pm 1}, r_P^{\pm 1}, r_C^{\pm 1}, r_L^{\pm 1}$ (see Figure \ref{verticesD}). 
For a chart $\Gamma$ in $B$, we denote the number of white vertices of type $r_X$ 
minus the number of white vertices of type $r_X^{-1}$ included in $\Gamma$ 
by $n_X(\Gamma)$, where $X=T,P,C,L$. 

\begin{figure}[ht!]
\labellist
\footnotesize \hair 2pt
\pinlabel $a$ [b] at 4 54
\pinlabel $a$ [br] at 40 50
\pinlabel $b$ [bl] at 70 50
\pinlabel $b$ [tr] at 40 24
\pinlabel $c$ [tl] at 70 24
\pinlabel $c_2$ [b] at 114 73
\pinlabel $c_1$ [b] at 123 73
\pinlabel $c_2$ [b] at 132 73
\pinlabel $c_1$ [b] at 141 73
\pinlabel $c_2$ [b] at 150 73
\pinlabel $c_1$ [b] at 159 73
\pinlabel $c_2$ [b] at 167 73
\pinlabel $c_1$ [b] at 176 73
\pinlabel $c_2$ [b] at 185 73
\pinlabel $c_1$ [b] at 194 73
\pinlabel $c_2$ [b] at 203 73
\pinlabel $c_1$ [b] at 212 73
\pinlabel $d$ [t] at 162 0
\pinlabel $c$ [b] at 263 73
\pinlabel $b$ [b] at 271 73
\pinlabel $a$ [b] at 281 73
\pinlabel $d_1$ [t] at 257 0
\pinlabel $d_2$ [t] at 267 0
\pinlabel $d_3$ [t] at 277 0
\pinlabel $d_4$ [t] at 287 0
\endlabellist
\centering
\includegraphics[scale=0.8]{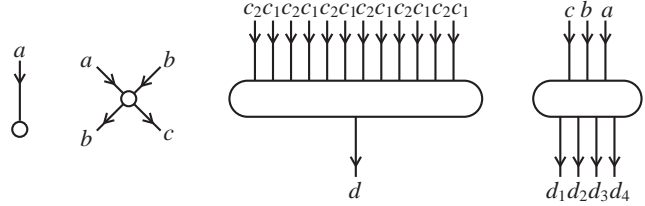}
\caption{Vertices of type $r_T$, $r_P$, $r_C$, $r_L$}
\label{verticesD}
\end{figure}

\begin{prop}\label{signature2}
The signature $\sigma(M)$ of the total space $M$ of a Lefschetz fibration 
$f\co M\rightarrow B$ described by $\Gamma$ is equal to 
$-n_T(\Gamma)-7n_C(\Gamma)+n_L(\Gamma)$. 
\end{prop}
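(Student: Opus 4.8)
The plan is to mimic the proof of Theorem~\ref{signature}, now for the triple $(\mathcal{X},\mathcal{R},\mathcal{S})$ defined above in place of Wajnryb's triple. Starting from a chart $\Gamma$ in $B$ corresponding to $f\colon M\to B$, I first form the fiber sum of $f$ with one copy of the (signature-zero) elementary fibration over $S^2$ for each edge $e$ of $\Gamma$ --- inserting a free edge labelled by the label of $e$ together with hoops reading $w_e^{-1}$ in a small disk near $b_0$, exactly as in the proof of Theorem~\ref{signature} --- and then apply channel changes to carry each free edge along a path $\gamma_e$ and ``cut'' $e$ into two stubs terminating at black vertices. The resulting chart $\Gamma_2$ has all of its components trees, so the fibration it describes is a fiber sum over $S^2$ of a fibration $f_3\colon M_3\to S^2$ with a trivial $\Sigma_g$--bundle over $B$; by Theorem~\ref{classification}, Novikov additivity, and the vanishing of the signature of a free-edge fibration, we get $\sigma(M)=\sigma(M_3)$, while $n_T,n_P,n_C,n_L$ are unchanged.

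The reduction is in fact simpler than for Wajnryb's presentation, because every element of $\mathcal{S}$ has word-length one and hence every black vertex is univalent: after the cutting, each component of $\Gamma_3$ (drawn in $S^2$) is either a free edge or a ``star'' consisting of one white vertex of type $r_T^{\pm1}$, $r_P^{\pm1}$, $r_C^{\pm1}$, or $r_L^{\pm1}$ with a black vertex at the end of each incident edge, and no chart moves of transition are needed. By additivity of the signature under fiber sum over $S^2$,
\[
\sigma(M)=\sigma(M_3)=n_T(\Gamma)\,I_g(r_T)+n_P(\Gamma)\,I_g(r_P)+n_C(\Gamma)\,I_g(r_C)+n_L(\Gamma)\,I_g(r_L),
\]
where a star of type $r^{-1}$ contributes $-I_g(r)$ because its mirror image with all edge orientations reversed describes the fibration on $-M$. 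One then evaluates the four constants by means of Theorem~\ref{nagami} and Meyer's cocycle: $I_g(r_T)=-1$ (the cocycle sum is empty and $s(r_T)=1$; equivalently the $S^2$--fibration with a single trivial vanishing cycle has total space $\Sigma_g\times S^2\,\#\,\overline{\mathbb{CP}}^2$), $I_g(r_P)=0$ (a Hurwitz move collapses the primitive-braid fibration to a pair of cancelling critical points), and $I_g(r_C)=-7$, $I_g(r_L)=1$ (from the explicit computations of Endo and Nagami; cf.\ \cite[Lemma~3.5, Remark~3.7, Propositions~3.9 and 3.10]{EN2004}). Since $I_g(r_P)=0$, this yields $\sigma(M)=-n_T(\Gamma)-7n_C(\Gamma)+n_L(\Gamma)$.

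The main obstacle is the invariance of the constants $I_g(r_C)$ and $I_g(r_L)$ under the choice of curves, since --- unlike the fixed relators of Wajnryb's presentation --- the relators in $\mathcal{R}$ range over all curve configurations realizing the prescribed intersection patterns. For $r_C$ this is immediate from the change-of-coordinates principle: a pair of non-separating simple closed curves meeting transversely at one point is unique up to a homeomorphism of $\Sigma_g$, and such a homeomorphism carries the corresponding $S^2$--fibration to an isomorphic one. For $r_L$ the four boundary curves of the embedded four-holed sphere may be separating or not in various ways, so the invariance of $I_g(r_L)$ is a genuine point --- it amounts to the assertion that a lantern substitution changes the signature by exactly $1$ irrespective of the ambient embedding, which is the heart of the Endo--Nagami computation. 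A minor secondary issue is the handling of type-$r_T^{\pm1}$ vertices in the cutting step: a trivial Dehn twist cannot label an edge incident to a legal black vertex, so such edges must be cut instead by a ``white free edge'' --- two white vertices of type $r_T$ and $r_T^{-1}$ joined by a single edge --- which describes the trivial $S^2$--fibration and alters none of $\sigma$, $n_T$, $n_P$, $n_C$, $n_L$.
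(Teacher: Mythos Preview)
Your proposal is correct and follows exactly the route the paper intends: its proof of Proposition~\ref{signature2} is the single sentence ``It is seen by a similar argument to the proof of Theorem~\ref{signature},'' and you have supplied that argument in detail, including the reduction to $S^2$ via edge-cutting, the star decomposition (simplified here since every black vertex is univalent), and the evaluation of $I_g$ on each relator type via \cite{EN2004}. Your identification of the two genuine technical points --- that $I_g(r_L)$ must be shown independent of the embedding of the four-holed sphere, and that trivial-twist edges cannot be capped by black vertices --- goes beyond what the paper makes explicit, and your proposed resolutions are sound.
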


\begin{proof}
It is seen by a similar argument to the proof of Theorem \ref{signature}. 
\end{proof}

\begin{exam} Let $B$ be a connected closed oriented surface of genus $2$ 
and $(\mathcal{X},\mathcal{R},\mathcal{S})$ the triple defined above for $g=3$. 
Let $a,b,c,d_1,d_2,d_3,d_4,c_1,c_2,c_3$ be right-handed Dehn twists along 
simple closed curves of the same names on $\Sigma_3$ depicted in Figure \ref{twists}. 
We present $B$ as an octagon with opposite sides identified and 
consider a chart $\Gamma$ and loops 
$\gamma_1,\gamma_2,\gamma_3,\gamma_4,\gamma_5$ based at $b_0$ in $B$ 
as in Figure \ref{exampleB}. 
We use a simplification of diagrams as in Figure \ref{verticesE} (a) 
if the curves for $x,y\in\mathcal{X}$ intersect transversely at one point, 
and that as in Figure \ref{verticesE} (b) if the curves for $x$ and $y$ are disjoint. 

Since the intersection words of $\gamma_1,\gamma_2,\gamma_3,\gamma_4,\gamma_5$ 
with respect to $\Gamma$ are 
\begin{align*}
w_{\Gamma}(\gamma_1) & =d_2^{-1}, \quad 
w_{\Gamma}(\gamma_2) =c_3^{-1}c^{-1}d_2^{-1}c_3^{-1}, \\
w_{\Gamma}(\gamma_3) & =c_1^{-1}b^{-1}c_2^{-1}d_3^{-1}
a^{-1}c_2^{-1}d_4^{-1}c_1^{-1}, \quad 
w_{\Gamma}(\gamma_4) =d_4a^{-1}, \quad 
w_{\Gamma}(\gamma_5) =d_1, 
\end{align*}
a Lefschetz fibration $f:M\rightarrow B$ of genus $3$ 
described by $\Gamma$ is isomorphic to 
the Lefschetz fibration constructed by Korkmaz and Ozbagci \cite[Theorem 1.2]{KO2001}. 
$f$ has only one singular fiber and it is of type ${\rm I}^+$. 
We can compute the signature $\sigma(M)$ of the total space $M$ 
by Proposition \ref{signature2}: 
\[
\sigma(M)=n_L(\Gamma)=-1, 
\]
which coincides with the value computed in \cite[Proposition 14]{EKKOS2002}. 
\end{exam}

\begin{figure}[ht!]
\labellist
\footnotesize \hair 2pt
\pinlabel $d_1$ [b] at 16 53
\pinlabel $d_4$ [b] at 70 53
\pinlabel $d_3$ [b] at 124 53
\pinlabel $d_2$ [b] at 177 53
\pinlabel $a$ [r] at 88 25
\pinlabel $b$ [t] at 132 17
\pinlabel $c_1$ [b] at 247 60
\pinlabel $c_2$ [b] at 302 60
\pinlabel $c_3$ [b] at 356 60
\pinlabel $c$ [t] at 284 16
\endlabellist
\centering
\includegraphics[scale=0.75]{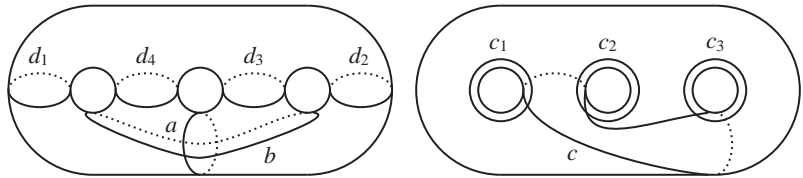}
\caption{Simple closed curves on $\Sigma_3$}
\label{twists}
\end{figure}

\begin{figure}[ht!]
\labellist
\footnotesize \hair 2pt
\pinlabel $b_0$ [t] at 234 450
\pinlabel {\small $\gamma_1$} [b] at 145 427
\pinlabel {\small $\gamma_2$} [b] at 23 305
\pinlabel {\small $\gamma_1$} [t] at 24 144
\pinlabel {\small $\gamma_2$} [t] at 146 26
\pinlabel {\small $\gamma_3$} [t] at 323 33
\pinlabel {\small $\gamma_4$} [t] at 430 142
\pinlabel {\small $\gamma_3$} [b] at 432 306
\pinlabel {\small $\gamma_4$} [b] at 310 427
\pinlabel {\small $\gamma_5$} [r] at 116 361
\pinlabel $d_2$ [b] at 188 400
\pinlabel $d_1$ [t] at 158 358
\pinlabel $c$ [r] at 153 304
\pinlabel $b$ [r] at 193 289
\pinlabel $a$ [t] at 228 328
\pinlabel $d_4$ [b] at 259 404
\pinlabel $d_3$ [b] at 230 384
\pinlabel $c_3$ [b] at 97 324
\pinlabel $c$ [b] at 73 287
\pinlabel $d_2$ [b] at 58 253
\pinlabel $c_3$ [b] at 41 224
\pinlabel $c_3$ [r] at 104 219
\pinlabel $d_2$ [b] at 59 158
\pinlabel $c_3$ [l] at 161 212
\pinlabel $c$ [r] at 135 193
\pinlabel $d_2$ [l] at 113 143
\pinlabel $c_3$ [r] at 88 113
\pinlabel $c_1$ [b] at 240 300
\pinlabel $b$ [b] at 275 295
\pinlabel $c_2$ [r] at 278 245
\pinlabel $d_3$ [l] at 298 239
\pinlabel $a$ [r] at 319 198
\pinlabel $c_2$ [l] at 342 187
\pinlabel $d_4$ [t] at 412 260
\pinlabel $c_1$ [l] at 424 223
\pinlabel $a$ [t] at 420 184
\pinlabel $c_2$ [b] at 334 315
\pinlabel $d_3$ [t] at 353 310
\pinlabel $a$ [r] at 370 289
\pinlabel $c_2$ [t] at 383 270
\pinlabel $c_2$ [t] at 343 277
\pinlabel $c_1$ [t] at 292 171
\pinlabel $c_1$ [r] at 205 183
\pinlabel $b$ [l] at 224 164
\pinlabel $d_4$ [r] at 341 102
\pinlabel $c_1$ [l] at 364 82
\pinlabel $d_4$ [t] at 388 120
\pinlabel {\small $r_L^{-1}$} [b] at 203 343
\endlabellist
\centering
\includegraphics[scale=0.75]{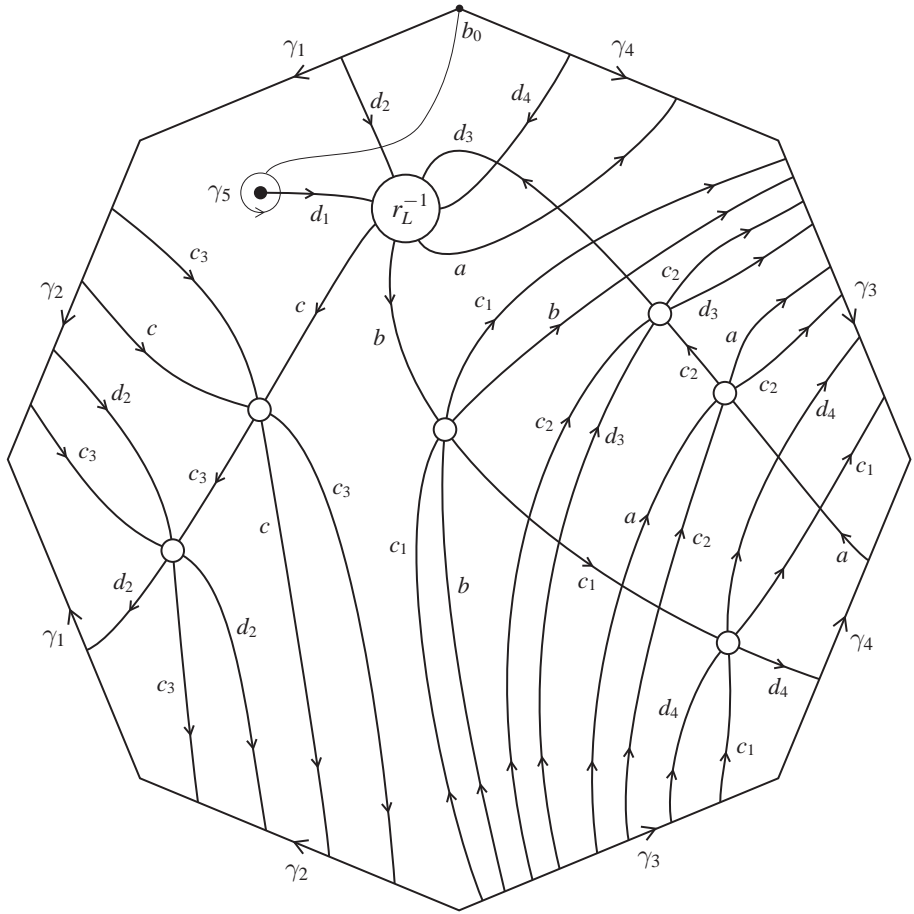}
\caption{Chart for Lefschetz fibration of Korkmaz and Ozbagci}
\label{exampleB}
\end{figure}

\begin{figure}[ht!]
\labellist
\footnotesize \hair 2pt
\pinlabel $x$ [b] at 4 56
\pinlabel $y$ [b] at 26 56
\pinlabel $x$ [b] at 49 56
\pinlabel $y$ [t] at 4 0
\pinlabel $x$ [t] at 26 0
\pinlabel $y$ [t] at 49 0
\pinlabel $:=$ [b] at 70 22
\pinlabel $x$ [b] at 91 56
\pinlabel $y$ [b] at 113 56
\pinlabel $x$ [b] at 135 56
\pinlabel $y$ [t] at 91 0
\pinlabel $x$ [t] at 113 0
\pinlabel $y$ [t] at 135 0
\pinlabel $z$ [b] at 117 29
\pinlabel {(a)} [t] at 70 -15
\pinlabel $x$ [br] at 180 44
\pinlabel $y$ [bl] at 210 44
\pinlabel $y$ [tr] at 180 12
\pinlabel $x$ [tl] at 210 12
\pinlabel $:=$ [b] at 228 22
\pinlabel $x$ [br] at 243 44
\pinlabel $y$ [bl] at 273 44
\pinlabel $y$ [tr] at 243 12
\pinlabel $x$ [tl] at 273 12
\pinlabel {(b)} [t] at 228 -15
\endlabellist
\centering
\includegraphics[scale=0.8]{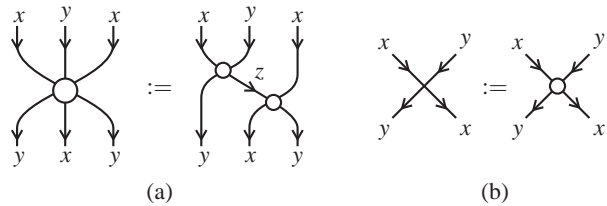}
\vspace{5mm}
\caption{Simplification of vertices}
\label{verticesE}
\end{figure}

\begin{prob}
Study various properties of Lefschetz fibrations by using chart description for the triple 
$(\mathcal{X},\mathcal{R},\mathcal{S})$ defined above. 
\end{prob}

We next mention chart description for Lefschetz fibrations with bordered base and fiber. 
Kamada \cite{Kamada2007} gave a general theory for charts in a compact oriented surface 
with boundary. 
Various presentations of mapping class groups of surfaces with boundary 
have been investigated by 
researchers including Gervais \cite{Gervais2001}, 
Labru\`{e}re and Paris \cite{LP2001}, Margalit and McCammond \cite{MM2009}. 
Combining these two kinds of studies, one can immediately obtain a chart 
description for Lefschetz fibrations with bordered base and fiber. 

\begin{prob}
Make use of chart description to study PALFs and Stein surfaces. 
\end{prob}

It would be worth considering compositions of monodromy representations with 
appropriate homomorphisms and charts corresponding to the compositions. 
For example, Hasegawa \cite{Hasegawa2006a, Hasegawa2006} adopted a homomorphism 
from the $m$--string braid group $B_m$ to the semi-direct product 
$(\mathbb{Z}_2)^m\times S_m$, while Endo and Kamada \cite{EK2014} used 
a standard epimorphism from the hyperelliptic mapping class group of 
a closed oriented surface of genus $g$ to 
the mapping class group of a sphere with $2g+2$ marked points. 

\begin{prob}
Consider chart descriptions for `nice' representations of mapping class groups to 
study invariants and classifications of Lefschetz fibrations. 
\end{prob}

Theorem \ref{main1} and Theorem \ref{main2} tell us that 
the numbers of singular fibers of all types and the signature of the total space 
completely determine the stable isomorphism class of a Lefschetz fibration 
with given base and fiber. 
Thus any numerical invariant of Lefschetz fibrations which is additive under 
fiber sum is determined by these invariants in principle. 

\begin{prob}
Construct numerical invariants of Lefschetz fibrations 
which are {\it not} additive under fiber sum. 
\end{prob}

Nosaka \cite{Nosaka2014} has recently defined an invariant which is not additive 
under fiber sum. 
Non-numerical invariants such as monodromy group would be also useful 
(see Matsumoto \cite{Matsumoto1996} and Park and Yun \cite{PY2009, PY2011}).

{\bf Acknowledgements} \ \ 
The authors would like to thank the referees for their helpful suggestions and corrections. 
The first author was partially supported by JSPS KAKENHI Grant Numbers 
21540079, 25400082. 
The third author was partially supported by JSPS KAKENHI Grant Numbers 
21340015, 26287013. 
The fourth author was partially supported by JSPS KAKENHI Grant Numbers 
21740042, 26400082.

%%%%%%%%%%%%%%%%%%%%   End of main body of article
%
%                             References
%
%   BiBTeX users uncomment the following line:
%
%\bibliographystyle{gtart}
%

\end{document}